\documentclass[11pt,preprint]{imsart}
\setattribute{journal}{name}{}
   \topmargin=0in
   \oddsidemargin=0in
   \evensidemargin=0in
   \textwidth=6.5in
   \textheight=8.5in

\RequirePackage[OT1]{fontenc}
\RequirePackage{amsthm,amsmath,amssymb,amsfonts,csquotes,mathrsfs}\RequirePackage{graphicx}
\RequirePackage{xr}
\RequirePackage[numbers]{natbib}
\RequirePackage[colorlinks,citecolor=blue,urlcolor=blue]{hyperref}
\RequirePackage{cleveref}
\RequirePackage{bbm}
\RequirePackage{epstopdf}
\RequirePackage{tikz}
\RequirePackage{subcaption}
\captionsetup{compatibility=false}
\DeclareMathOperator\arctanh{arctanh}
\numberwithin{equation}{section}
\theoremstyle{plain}
\newtheorem{thm}{Theorem}[section]
\newtheorem{remark}[thm]{Remark}
\newtheorem{defn}[thm]{Definition}
\newtheorem{ppn}[thm]{Proposition}
\newtheorem{cor}[thm]{Corollary}
\newtheorem{lem}[thm]{Lemma}


\newcommand{\B}{\mathbb{B}}

\newcommand{\E}{\mathbb{E}}

	\renewcommand{\P}{\mathbb{P}}
\newcommand{\Q}{\mathbb{Q}}
\newcommand{\R}{\mathbb{R}}



\newcommand{\cB}{\mathcal{B}}

\newcommand{\cD}{\mathcal{D}}

\newcommand{\cG}{\mathcal{G}}

\newcommand{\cP}{\mathcal{P}}

\newcommand{\cR}{\mathcal{R}}



\def\beq{\begin{equation}}
\def\eeq{\end{equation}}
\def\ba{\begin{enumerate}[(a)]}
\def\bei{\begin{enumerate}[(i)]}
\def\be{\begin{enumerate}[(1)]}
\def\ee{\end{enumerate}}
\def\bi{\begin{itemize}}
\def\ei{\end{itemize}}
\def\beg{\begin{eg}}
\def\eeg{\end{eg}}
\def\bd{\begin{defn}}
\def\ed{\end{defn}}
\def\bt{\begin{thm}}
\def\et{\end{thm}}
\def\bl{\begin{lemma}}
\def\el{\end{lemma}}
\def\bfac{\begin{fact}}
\def\efac{\end{fact}}

\def\bc{\begin{cor}}
\def\ec{\end{cor}}
\def\bp{\begin{prop}}
\def\ep{\end{prop}}
\def\bo{\begin{observe}}
\def\eo{\end{observe}}
\def\RR{\mathbb{R}}

\def\beg{\begin{eg}}
\def\eeg{\end{eg}}


\begin{document}


\begin{frontmatter}
\title{Joint estimation of parameters in Ising model}
\runtitle{Joint estimation of parameters in Ising model}

	\thankstext{m1}{Partially supported  by NSF Grant DMS-1664650.}
	\thankstext{m2}{Partially supported by NSF Grant DMS-1712037.}

\begin{aug}

\author{\fnms{Promit} \snm{Ghosal}\thanksref{m1}\ead[label=e1]{pg2475@columbia.edu}}
		\and
		\author{\fnms{Sumit} \snm{Mukherjee}\thanksref{m2}\ead[label=e2]{sm3949@columbia.edu}}
		
\runauthor{Ghosal \& Mukherjee}		
			
%
%
\affiliation{Columbia University}

\address{Department of Statistics\\
			1255 Amsterdam Avenue\\
			New York, NY-10027. \\
			\printead{e1}\\
			\printead{e2}}
			

\end{aug}

%
%
%
%
%
%
%

\begin{abstract}

We study joint estimation of the inverse temperature and magnetization parameters $(\beta,B)$ of an Ising model with a non-negative coupling matrix $A_n$ of size $n\times n$, given one sample from the Ising model. We give a general bound on the rate of consistency of the bi-variate pseudo-likelihood estimator. Using this, we show that estimation at rate $n^{-1/2}$ is always possible if $A_n$ is the adjacency matrix of a bounded degree graph. If $A_n$ is the scaled adjacency matrix of a graph whose average degree goes to $+\infty$, the situation is a bit more delicate. In this case estimation at rate $n^{-1/2}$ is still possible if the graph is not regular (in an asymptotic sense). Finally,  we show that consistent estimation of both parameters is impossible if the graph is Erd\"os-Renyi with parameter $p>0$ free of $n$, thus confirming that estimation is harder on approximately regular graphs with large degree.

\end{abstract}

\begin{keyword}[class=MSC]
\kwd[Primary ]{62F12}
\kwd[; secondary ]{60F10}
\end{keyword}

\begin{keyword}
\kwd{Ising Model}
\kwd{Pseudo-likelihood}
\end{keyword}

\end{frontmatter}

\section{Introduction}\label{sec:intro}

Suppose $\beta>0,B\ne 0$ are unknown parameters, and $A_n$ is an $n\times n$ symmetric matrix with non-negative entries with $0$ on the diagonal. For ${\bf x}:=(x_1,\cdots,x_n)\in \{-1,1\}^n$, define a p.m.f. $\P_{n,\beta,B}(.)$ by setting
\begin{align}\label{eq:ising}
\P_{n,\beta,B}({\bf X}={\bf x})=\frac{1}{Z_n(\beta,B)}e^{\frac{\beta}{2}{\bf x}'A_n{\bf x}+B\sum_{i=1}^nx_i}.
\end{align}

This is the Ising model with coupling matrix $A_n$, and inverse temperature parameter $\beta$ and magnetization parameter $B$. Study of Ising models is a growing area which has received significant attention in Statistics and Machine Learning in recent years. The theoretical investigation into the properties of Ising models can be broadly classified into two categories. One of the branches assumes that the matrix $A_n$ is the unknown parameter of interest, and focuses on estimating $A_n$ under the assumption that i.i.d. copies ${\bf X}^{(1)},\cdots,{\bf X}^{(p)}$ are available from the model described in \ref{eq:ising} (c.f. \cite{structure_learning,bresler,highdim_ising,ising_nonconcave} and references there-in).
 Another branch works under the assumption that only one observation ${\bf X}$ is available from the model in \eqref{eq:ising} (c.f. \cite{BhattacharyaM,Chatterjee,comets_mle,gidas,guyon} and references there-in). In this setting, estimation of the whole matrix $A_n$ (which has $n^2$ entries) is impossible from a vector ${\bf X}$ of size $n$. As such, the standard assumption is that the matrix $A_n$ is completely specified, and the focus is on estimating the parameters $(\beta,B)$. In this direction, the behavior of the MLE for the Curie-Weiss model (when $A_n$ is the scaled adjacency matrix of the complete graph) was studied in \cite{comets_mle}, where the authors showed that in the regime $\beta>0,B\ne 0$, the MLE of $\beta$ is $\sqrt{n}$ consistent for $\beta$ if $B$ is known, and vice versa. They also show that if both $(\beta,B)$ are unknown, then the joint MLE for the model does not exist with probability 1. This raises the natural question as to whether there are other estimators which work in this case. 
Focusing on the case when $B=0$ is known, \cite{Chatterjee} gave general sufficient conditions under which the pseudo-likelihood estimate for $\beta$ is $\sqrt{n}$ consistent. Developing on this approach, \cite{BhattacharyaM} studies the behavior of the rate of consistency of the pseudo-likelihood estimator at all values of $\beta$, demonstrating interesting phase transition properties in the rate of the pseudo-likelihood estimator. 
The question of joint estimation of $(\beta,B)$ for a general matrix $A_n$ was raised in \cite{Chatterjee}. Up to the best of the our knowledge, this question has not been addressed in the literature. This will be the focus of this paper.

\subsection{Main results}

Throughout this paper we will assume that  $(\beta,B)$ are unknown parameters of interest, and the coupling matrix $A_n$ has non-negative entries and is completely known. We will also assume the following two conditions
\begin{align}\label{eq:bd}
\max_{i\in [n]}\sum_{j=1}^nA_n(i,j)\le &\gamma,\\
\label{eq:non_trivial}
\liminf_{n\rightarrow\infty}\frac{1}{n}\sum_{i,j=1}^nA_n(i,j)&>0.
\end{align}
Here  $\gamma$ is a finite constant free of $n$.
 Note that \eqref{eq:bd} implies that $A_n$ satisfies \cite[Eqn  (1.10)]{BasakM}, as well as $||A_n||_2\le \gamma$ (\cite[Cond (a),Thm 1.1]{Chatterjee}), where $||A_n||_2$ is the operator norm of $A_n$. 
The most common examples of the coupling matrix $A_n$ are scaled adjacency matrices of labelled simple graphs, defined as follows:
\begin{defn}\label{def:graph}
For a graph $G_n$ with vertices labelled by $[n]:=\{1,2,\cdots,n\}$, define the coupling matrix $A_n$ by setting
$$A_n(i,j):=\frac{n}{2E(G_n)}{1}\{\text{vertices }i\text{ and }j\text{ are connected in }G_n\},$$
where $E(G_n)$ is the number of edges in the graph $G_n$.
\end{defn}
The scaling of the adjacency matrix by $\frac{n}{2E(G_n)}$ ensures that the resulting Ising model has non-trivial phase transition properties (see for e.g. \cite{BasakM}), which is of much interest in Statistical Physics and Applied Probability. The influence of phase transition on Inference has received recent attention (c.f. \cite{BhattacharyaM,MMY}). Under this scaling, \eqref{eq:non_trivial} holds trivially, as $\sum_{i,j=1}^nA_n(i,j)=n$. Condition \eqref{eq:bd} demands that the maximum degree of $G_n$ is of the same order as the average degree. Below we give examples of some graphs for which \eqref{eq:bd} holds. We will also use these  as running examples for all our future assumptions.

\begin{defn}
For a simple graph $G_n$ on $n$ vertices, let $(d_1(G_n),\cdots,d_n(G_n))$ denote the labelled degrees of $G_n$.
\end{defn}

\begin{enumerate}
\item[(a)]
$G_n$ is a $d_n$ regular graph for some $d_n\in [1,n-1]$. In this case we have $\sum_{j=1}^nA_n(i,j)=1$ for all $i\in [n]$, and so  \eqref{eq:bd} holds with $\gamma=1$.
\\

\item[(b)]
$G_n$ is an Erdos-Renyi graph with parameter $p_n\ge (1+\delta)\frac{\log n}{n}$, where $\delta>0$ is fixed. In this case we have
$$\frac{\max_{i\in [n]}d_i(G_n)}{np_n}\stackrel{p}{\rightarrow }1,\quad \frac{2E(G_n)}{n^2p_n}\stackrel{p}{\rightarrow}1$$
and so \eqref{eq:bd} holds with probability tending to $1$ for $\gamma=2$.
\\

\item[(c)]
$G_n$ is a convergent sequence of dense graph converging to the graphon $W$ which is not $0$ (see \cite{lovasz_book} for a survey on the literature on graphons/graph limits). In this case we have $$\max_{i\in [n]}d_i(G_n)\le n-1,\quad \frac{2 E(G_n)}{n^2}\rightarrow \int_{[0,1]^2}W(x,y)dxdy,$$ and so \eqref{eq:bd} holds for all large $n$ with $\gamma=\frac{2}{\int_{[0,1]^2}W(x,y)dxdy}$.
\\

\item[(d)]
$G_n$ is a bi-regular bipartite graph with parameters $(a_n,b_n,c_n,d_n,p)$ defined as follows:

\begin{defn}
$G_n$ has bipartition sets $G_{n,1}$ and $G_{n,2}$, with sizes $a_n$ and $b_n$ respectively, and each vertex in $G_{n1}$ has degree $c_n$, and each vertex in $G_{n2}$ has degree $d_n$. 
 Finally assume that $$\lim_{n\rightarrow\infty}\frac{a_n}{n}=p\in (0,1),$$ 
Note that the parameters $(a_n,b_n,c_n,d_n)$ are related, as we have $E(G_n)=a_nc_n=b_nd_n$, and $a_n+b_n=n$.
\end{defn}

In this case we have $d_i(G_n)\le \max(c_n,d_n)$ and $E(G_n)= a_nc_n$, and so  \eqref{eq:bd} holds for all large $n$ with $\gamma=\frac{1}{\max(p,1-p)}$. 

\end{enumerate}

We will now introduce the bivariate pseudo-likelihood estimator.
\begin{defn}
For any $i\in [n]$ we have 
\begin{equation*}
\P_{n,\beta,B}(X_i=1|X_j,j\ne i)=\frac{e^{\beta m_i({\bf x})+B}}{e^{\beta m_i({\bf x})+B}+e^{-\beta m_i({\bf x})-B}},
\end{equation*}
where $m_i({\bf x}):=\sum_{j=1}^nA_n(i,j)x_j$. Define the pseudo-likelihood as the product of the one dimensional conditional distributions: (see \cite{B1,B2})
$$\prod_{i=1}^n \P_{n,\beta,B}(X_i=x_i|X_j,j\ne i)=2^{-n}\text{ exp }\Big\{\sum_{i=1}^n\Big(\beta x_i m_i({\bf x})+Bx_i-\log\cosh(\beta m_i({\bf x})+B)\Big)\Big\}$$
On taking $\log$ and differentiating this with respect to $(\beta,B)$ we get the vector $(Q_n(\beta,B|{\bf x}),R_n(\beta,B|{\bf x}))$, where
\begin{align*}
Q_n(\beta,B|{\bf x}):=&\sum_{i=1}^nm_i({\bf x})(x_i-\tanh(\beta m_i({\bf x})+B)),\\
R_n(\beta,B|{\bf x}):=&\sum_{i=1}^n(x_i-\tanh(\beta m_i({\bf x})+B)).
\end{align*}
The bi-variate equation  $$PL_n(\beta,B|{\bf x}):=(Q_n(\beta,B|{\bf x}),R_n(\beta,B|{\bf x}))=(0,0)$$ will be referred to as the pseudo-likelihood equation in this paper. If the pseudo-likelihood equation has a unique root in $(\beta,B)\in \R^2$, denote it by $(\hat{\beta}_n,\hat{B}_n)$.  This is the pseudo-likelihood estimator for the parameter vector $(\beta,B)$. 
\end{defn}

%
%
%
%

\begin{defn}\label{def:bi}
Suppose  $U_n$ and $V_n$ are two non-negative random variables on the probability space $\Big(\{-1,1\}^n,\P_{n,\beta,B}\Big),$ where $\P_{n,\beta,B}$ is the Ising p.m.f. given in \eqref{eq:ising}. 
We will say $U_n=O_p(V_n)$ if the sequence $\frac{U_n}{V_n}$ is tight. In particular this implies that $\lim_{n\rightarrow\infty}\P_{n,\beta,B}(U_n>0,V_n=0)=0.$
We will say $U_n=\Theta_p(V_n)$, if both $U_n=O_p(V_n)$ and $V_n=O_p(U_n)$. We will say $U_n=o_p(V_n)$ if $\frac{U_n}{V_n}\stackrel{p}{\rightarrow}0$.
\end{defn}

\begin{defn}
Let $\Theta\subset\R^2$ denote the set of all parameters $(\beta,B)$ such that $\beta>0,B\ne 0$.
\end{defn}

Our first result gives a general upper bound on the error of the pseudo-likelihood estimator.

\begin{thm}\label{thm:pl}

Suppose ${\bf X}=(X_1,\cdots,X_n)$ is an observation from the Ising model \eqref{eq:ising}, where the coupling matrix $A_n$ satisfies \eqref{eq:bd} and \eqref{eq:non_trivial}, and $(\beta,B)\in \Theta$. Set $$T_{n}({\bf x}):=\frac{1}{n}\sum_{i=1}^n\Big(m_i({\bf x})-\bar{m}({\bf x})\Big)^2.$$
\begin{enumerate}
\item[(a)]

The pseudo-likelihood estimator $(\hat{\beta}_n,\hat{B}_n)$ exists iff
${\bf x}\in A_{1,n}^c\cap A_{2,n}^c\cap A_{3,n}^c\cap A_{4,n}^c$, where
 \begin{align*}
 A_{1,n}:=&\{{\bf x}\in \{-1,1\}^n:T_n({\bf x})=0\},\\
 A_{2,n}:=&\{{\bf x}\in \{-1,1\}^n:m_i({\bf x})x_i=|m_i({\bf x})|\text{ for all }i\in [n]\},\\
 A_{3,n}:=&\{{\bf x}\in \{-1,1\}^n:m_i({\bf x})x_i=-|m_i({\bf x})|\text{ for all }i\in [n]\},\\
 A_{4,n}:=&\{{\bf 1},-{\bf 1}\}.
 \end{align*}
\item[(b)]
If the true parameter is $(\beta_0,B_0)\in\Theta$, then we have 
$$\lim_{n\rightarrow\infty}\P_{n,\beta_0,B_0}(A_{2,n}^c\cap A_{3,n}^c\cap A_{4,n}^c)=1.$$

\item[(c)] 
Further, if $\frac{1}{T_n({\bf X})}=o_p(\sqrt{n})$, then 
$$||\hat{\beta}_n-\beta_0,\hat{B}_n-B_0||=O_p\Big(\frac{1}{\sqrt{n}T_n({\bf X})}\Big).$$ In particular, $(\hat{\beta}_n,\hat{B}_n)$ is jointly consistent for $(\beta,B)$ in $\Theta$.

\end{enumerate}
\end{thm}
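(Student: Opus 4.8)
I would work with the log-pseudo-likelihood
\[
\ell_n(\beta,B\mid{\bf x})=\sum_{i=1}^n\Big(\beta x_i m_i({\bf x})+Bx_i-\log\cosh(\beta m_i({\bf x})+B)\Big),
\]
whose $(\beta,B)$-gradient is exactly $PL_n(\beta,B\mid{\bf x})=(Q_n,R_n)$ and which is concave in $(\beta,B)$ because $\log\cosh$ is convex. Hence $PL_n=(0,0)$ has a root iff $\ell_n$ attains its maximum, and a unique root iff the maximizer is unique. Writing $v_i:=(m_i({\bf x}),1)\in\R^2$, the Hessian of $\ell_n$ is $-\sum_i\operatorname{sech}^2(\beta m_i+B)\,v_iv_i^\top$, which is negative definite iff the $v_i$ span $\R^2$, i.e.\ iff the $m_i({\bf x})$ are not all equal, i.e.\ iff ${\bf x}\notin A_{1,n}$; so $\ell_n(\cdot\mid{\bf x})$ is strictly concave precisely off $A_{1,n}$. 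Given strict concavity, the maximum is attained iff $\ell_n$ has no recession direction; a direct computation gives the recession function $\ell_n^{\mathrm{rec}}(u,v)=\sum_i\big((um_i({\bf x})+v)x_i-|um_i({\bf x})+v|\big)\le0$, so the maximum fails to be attained iff $\ell_n^{\mathrm{rec}}(u,v)=0$ for some $(u,v)\neq0$, equivalently iff the planar points $\{x_i v_i:i\in[n]\}$ all lie in one closed half-plane through the origin. Part (a) then follows by spelling out this half-plane condition, a short two-dimensional convex-geometry computation (using $A_n\ge0$ with zero diagonal) that matches ${\bf x}\in A_{2,n}\cup A_{3,n}\cup A_{4,n}$.

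For part (b) I would bound $\P_{n,\beta_0,B_0}$ of each exceptional set. For $A_{4,n}=\{{\bf 1},-{\bf 1}\}$: since ${\bf 1}'A_n{\bf 1}=\sum_{i,j}A_n(i,j)=\Theta(n)$ by \eqref{eq:non_trivial} and the field term is linear, while there are $2^n$ configurations, flipping a fixed positive fraction of coordinates and comparing the exponents in $Z_n(\beta_0,B_0)$ shows $\P_{n,\beta_0,B_0}({\bf X}=\pm{\bf 1})\le e^{-cn}$. For $A_{2,n},A_{3,n}$ — the events $\{m_i({\bf X})X_i\ge0\ \forall i\}$ and $\{m_i({\bf X})X_i\le0\ \forall i\}$ — I would show that with probability $\to1$ a positive proportion of sites have $|m_i({\bf X})|$ bounded below (using \eqref{eq:bd}–\eqref{eq:non_trivial} to control the empirical law of the local fields), and that for any such site the "wrong" sign of $X_i$ has conditional probability bounded below given ${\bf X}_{-i}$; choosing an almost non-interacting subfamily of such sites, or running a conditioning martingale over the $n$ sites, then yields exponential decay. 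The same scheme gives $\P_{n,\beta_0,B_0}(A_{1,n})\to0$, so $\P_{n,\beta_0,B_0}(A_{1,n}\cup\cdots\cup A_{4,n})\to0$.

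For part (c), restrict to the event ${\bf X}\notin A_{1,n}\cup\cdots\cup A_{4,n}$, on which $\hat\theta_n:=(\hat\beta_n,\hat B_n)$ is the unique maximizer of $\ell_n(\cdot\mid{\bf X})$; set $\theta_0=(\beta_0,B_0)$. The two quantitative ingredients are: (i) the gradient bound $\|PL_n(\theta_0\mid{\bf X})\|=O_p(\sqrt n)$ — note $\E_{n,\theta_0}[PL_n(\theta_0\mid{\bf X})]=(0,0)$ because $A_n$ has zero diagonal, so $\E[X_i-\tanh(\beta_0 m_i+B_0)\mid{\bf X}_{-i}]=0$, and the $O_p(\sqrt n)$ control comes from a Stein/exchangeable-pairs concentration inequality for linear-type statistics of the Ising measure (in the spirit of \cite{Chatterjee}), using $\|A_n\|_2\le\gamma$ and $|m_i({\bf x})|\le\gamma$; and (ii) the curvature bound — since $|m_i({\bf x})|\le\gamma$, on any bounded neighbourhood of $\theta_0$ one has $\operatorname{sech}^2(\beta m_i+B)\ge c>0$, so the least eigenvalue of the Hessian of $-\ell_n$ there is $\ge\det/\operatorname{trace}\ge c''\,nT_n({\bf X})$, because $\det=\sum_{i<j}\operatorname{sech}^2(\cdot)\operatorname{sech}^2(\cdot)(m_i-m_j)^2\ge c^2n^2T_n({\bf X})$ while $\operatorname{trace}\le(\gamma^2+1)n$. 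A preliminary localization (controlling $\ell_n$ far from $\theta_0$ through the linear growth of its Bregman remainder, a quantitative form of the absence of recession directions established in (a)–(b)) gives $\hat\theta_n=O_p(1)$, so the curvature bound applies along $[\theta_0,\hat\theta_n]$; plugging into $0\le\ell_n(\hat\theta_n)-\ell_n(\theta_0)\le\|PL_n(\theta_0)\|\,\|\hat\theta_n-\theta_0\|-\tfrac{c''}{2}nT_n({\bf X})\|\hat\theta_n-\theta_0\|^2$ forces $\|\hat\theta_n-\theta_0\|\le 2\|PL_n(\theta_0)\|/(c''\,nT_n({\bf X}))=O_p\big(1/(\sqrt n\,T_n({\bf X}))\big)$. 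Under the hypothesis $1/T_n({\bf X})=o_p(\sqrt n)$ this is $o_p(1)$, which both yields joint consistency and is what lets the bootstrap close (it confines $\hat\theta_n$ to the neighbourhood where the $\operatorname{sech}^2\ge c$ lower bound holds). The main obstacle is ingredient (i): for dense $A_n$ a crude variance bound only gives $O(n^2)$, so one genuinely needs concentration/correlation-decay for the Ising model via $\|A_n\|_2\le\gamma$; a secondary difficulty is making the localization uniform as $T_n({\bf X})\to0$.
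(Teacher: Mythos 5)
Your part~(c) follows the paper's argument almost verbatim: the paper likewise lower-bounds the least eigenvalue of the negative Hessian by $|H_n|/\operatorname{tr}(H_n)\ge \operatorname{sech}^4(\beta\gamma+|B|)\,nT_n({\bf x})/(1+\gamma^2)$, uses $Q_n(\theta_0\mid{\bf X}),R_n(\theta_0\mid{\bf X})=O_p(\sqrt n)$ from Lemma~\ref{lem:basak2} (which imports the exchangeable-pairs/variance bounds you allude to), and closes by the same localization--bootstrap: $\min(Y_n,r)=O_p(1/(\sqrt n\,T_n))$, then $1/T_n=o_p(\sqrt n)$ forces $Y_n=o_p(1)$, after which the restriction to a bounded neighbourhood can be dropped. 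So on (c) you and the paper agree.

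Part~(b) is where you take a genuinely different route. You propose a Peierls-type flipping count for $A_{4,n}$ and a ``conditioning martingale'' or near-independent-subfamily argument for $A_{2,n},A_{3,n}$, which would require a separate concentration estimate you do not carry out (and for dense $A_n$ the ``almost non-interacting subfamily'' is not a priori available). The paper instead recycles the gradient itself: on $A_{2,n}$ one has $Q_n(\theta_0\mid{\bf x})\ge(1-\tanh(\beta_0\gamma+|B_0|))\sum_i|m_i({\bf x})|\ge c\,|\sum_i x_im_i({\bf x})|$, and then the already-available facts $Q_n(\theta_0\mid{\bf X})=O_p(\sqrt n)$ and $|\sum_i X_im_i({\bf X})|\ne o_p(n)$ (Lemma~\ref{lem:rootn}) give $\P(A_{2,n})\to0$ at once; the same for $A_{3,n}$, and $A_{4,n}$ is killed by $|R_n(\theta_0\mid\pm{\bf 1})|\ge n(1-\tanh(\beta_0\gamma+|B_0|))$ against $R_n=O_p(\sqrt n)$. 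Your version can probably be made to work, but it is heavier and re-derives concentration the paper already has in hand. Also note the statement of part (b) deliberately excludes $A_{1,n}$: your ``same scheme gives $\P(A_{1,n})\to0$'' is not part of what is to be proved ($T_n>0$ enters only through the assumption $1/T_n=o_p(\sqrt n)$ in (c)).

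The real gap is in part~(a), exactly at the sentence ``a short two-dimensional convex-geometry computation \dots that matches ${\bf x}\in A_{2,n}\cup A_{3,n}\cup A_{4,n}$.'' Your recession criterion is right --- the maximum is attained iff for every $(u,v)\ne0$ some $i$ has $(um_i({\bf x})+v)x_i<0$ --- but the four sets only encode the \emph{axis} directions $(\pm1,0),(0,\pm1)$, and there can be a diagonal recession direction with neither coordinate zero. For instance, with $n=3$, $A_n$ the unit-weight path $1\!-\!2\!-\!3$ and ${\bf x}=(1,1,-1)$ one gets $m({\bf x})=(1,0,1)$, hence ${\bf x}\notin A_{1,n}\cup\cdots\cup A_{4,n}$, yet
\[
\widetilde{PL}_n(\beta,B\mid{\bf x})=B-2\log\cosh(\beta+B)-\log\cosh(B),
\]
which has no critical point (it would need $\tanh(\beta+B)=0$ and $\tanh B=1$); the supremum $\log 2$ is only approached along $(\beta,B)=t(-1,1)$, $t\to\infty$. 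So the asserted equivalence is false, and this is precisely the step you left to the reader. (The paper's own proof of the ``if'' direction has a matching imprecision --- it checks $\widetilde{PL}_n\to-\infty$ only along the two coordinate axes, which does not give coercivity of a concave function --- but that does not rescue your write-up; the honest characterization is ``no closed half-plane through the origin contains all of $\{x_i(m_i({\bf x}),1)\}_{i\in[n]}$,'' which strictly contains $A_{2,n}\cup A_{3,n}\cup A_{4,n}$.) Since parts (b) and (c) only use the forward implication together with the event $A_{2,n}^c\cap A_{3,n}^c\cap A_{4,n}^c\cap\{T_n>0\}$, this does not derail the rest of the theorem, but part (a) as you have sketched it cannot be accepted.
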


An immediate corollary of  Theorem \ref{thm:pl} is the following corollary.

\begin{cor}\label{cor:rootn}
In the setting of Theorem \ref{thm:pl}, if we further have
\begin{align}\label{eq:rootn}
T_n({\bf X})=\Theta_p(1),
\end{align} then
$||\hat{\beta}_n-\beta_0,\hat{B}_n-B_0||=O_p\Big(\frac{1}{\sqrt{n}}\Big)$ under $\P_{n,\beta_0,B_0}$, i.e. the pseudo-likelihood estimator is jointly $\sqrt{n}$ consistent.

\end{cor}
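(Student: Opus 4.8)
The plan is to deduce this directly from part (c) of Theorem \ref{thm:pl}, since the extra hypothesis \eqref{eq:rootn} is precisely what is needed to both trigger that part and simplify its conclusion. Recall from Definition \ref{def:bi} that $T_n({\bf X}) = \Theta_p(1)$ unpacks into the two statements $T_n({\bf X}) = O_p(1)$ and $1/T_n({\bf X}) = O_p(1)$; it is the second of these that does all the work.

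First I would observe that $1/T_n({\bf X}) = O_p(1)$ implies $1/T_n({\bf X}) = o_p(\sqrt{n})$, simply because a tight sequence divided by $\sqrt{n}\to\infty$ converges to $0$ in probability. Hence the hypothesis $\tfrac{1}{T_n({\bf X})} = o_p(\sqrt{n})$ of Theorem \ref{thm:pl}(c) is met, and we may invoke that part to conclude
$$\|\hat\beta_n - \beta_0,\ \hat B_n - B_0\| = O_p\!\Big(\frac{1}{\sqrt{n}\,T_n({\bf X})}\Big).$$
Next I would write $\frac{1}{\sqrt{n}\,T_n({\bf X})} = \frac{1}{\sqrt n}\cdot\frac{1}{T_n({\bf X})}$ and use $1/T_n({\bf X}) = O_p(1)$ together with the deterministic factor $1/\sqrt n$ to get $\frac{1}{\sqrt n}\cdot\frac{1}{T_n({\bf X})} = O_p(1/\sqrt n)$; combining with the previous display yields $\|\hat\beta_n - \beta_0, \hat B_n - B_0\| = O_p(1/\sqrt n)$ under $\P_{n,\beta_0,B_0}$, which is the claim.

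There is no genuine obstacle here: the corollary is a formal consequence of Theorem \ref{thm:pl}(c). The only points requiring a line of care are (i) checking that $\Theta_p(1)$ really gives $1/T_n({\bf X}) = o_p(\sqrt n)$ so that part (c) applies, and (ii) handling the event $\{T_n({\bf X}) = 0\} = A_{1,n}$ on which $(\hat\beta_n,\hat B_n)$ need not exist — but $T_n({\bf X}) = \Theta_p(1)$ forces $\P_{n,\beta_0,B_0}(A_{1,n}) \to 0$, and combined with part (b) this shows the estimator exists with probability tending to $1$, so the $O_p$ statement is meaningful. All of this is routine bookkeeping with the $O_p/o_p/\Theta_p$ notation of Definition \ref{def:bi}.
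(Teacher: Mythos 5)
Your proof is correct and matches the paper's intent exactly: the paper offers no separate argument, simply calling the corollary an ``immediate'' consequence of Theorem \ref{thm:pl}(c), and your two-step reduction (first $1/T_n({\bf X})=O_p(1)\Rightarrow 1/T_n({\bf X})=o_p(\sqrt n)$ to invoke part (c), then $1/T_n({\bf X})=O_p(1)$ again to absorb the random factor) is the routine bookkeeping the authors had in mind. The remark about $A_{1,n}$ and existence is a nice touch of care, though part (c) already packages the well-definedness claim.
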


Corollary \ref{cor:rootn} shows that \eqref{eq:rootn} is a  sufficient condition for $\sqrt{n}$ consistency of the pseudo-likelihood estimate. Note that condition \eqref{eq:rootn} is an implicit condition, and it is not clear when this will hold. We will now give an exact characterization for \eqref{eq:rootn} in terms of the matrix $A_n$ for \enquote{mean field} matrices, introduced in the following definition.

\begin{defn}
We say that a sequence of matrices $\{A_n\}_{n\ge 1}$ satisfies the mean field condition, if we have
\begin{align}\label{eq:mean_field}
\lim_{n\rightarrow\infty}\frac{1}{n}\sum_{i,j=1}^nA_n(i,j)^2=0.
\end{align}
\end{defn}
Condition \ref{eq:mean_field} was first introduced in \cite{BasakM} to study the limiting behavior of normalizing constant of Ising and Potts models. In particular, if $A_n(i,j)=\frac{n}{2E(G_n)}G_n(i,j)$, where $G_n$ is the adjacency matrix of a graph, then \eqref{eq:mean_field} holds iff $\E(G_n)\gg n$. Indeed, this is because
$$\sum_{i,j=1}^n A_n(i,j)^2=\frac{n^2}{4E(G_n)^2}\sum_{i,j=1}^nG_n(i,j)^2=\frac{n^2}{4E(G_n)},$$
which is $o(n)$ iff $\E(G_n)\gg n$. Thus \eqref{eq:mean_field} holds in the following examples:

\begin{enumerate}
\item[(a)]
$G_n$ is a $d_n$ regular graph with $\lim_{n\rightarrow\infty}d_n=\infty$. In this case we have
$2E(G_n)=nd_n\gg n$.

\item[(b)]
$G_n$ is an Erdos-Renyi graph with parameter $p_n\gg \frac{1}{n}$. In this case we have
$E(G_n)\stackrel{p}{\approx}\frac{n^2p_n}{2}\gg n$.

\item[(c)]
$G_n$ is a convergent sequence of dense graph converging to the graphon $W$ which is not identically $0$. In this case we have $\E(G_n)=\Theta(n^2)$.

\item[(d)]
$G_n$ is a bi-regular bipartite graph with parameters $(a_n,b_n,c_n,d_n,p)$ as in Definition \ref{def:bi}, such that $\lim_{n\rightarrow\infty}(c_n+d_n)=\infty$. In this case we have
$\frac{c_n}{d_n}=\frac{b_n}{a_n}\rightarrow \frac{1-p}{p}$,  and so
$\frac{E(G_n)}{n}=\frac{a_nc_n}{n}\sim p c_n\rightarrow\infty$.

\end{enumerate}

\begin{defn}
Given the coupling matrix $A_n$, let $\cR_n(i):=\sum_{j=1}^nA_n(i,j)$ denote the $i^{th}$ row sum of $A_n$. 
\end{defn}

Our next result now gives a simple sufficient condition for joint $\sqrt{n}$ consistency of the pseudo-likelihood estimator. 
\begin{thm}\label{thm:basak2}
Suppose ${\bf X}=(X_1,\cdots,X_n)$ is an observation from the Ising model \eqref{eq:ising}, where the coupling matrix $A_n$ satisfies \eqref{eq:bd} and \eqref{eq:mean_field}.
If 
\begin{align}\label{eq:irregular}
\liminf_{n\rightarrow\infty}\frac{1}{n}\sum_{i=1}^n(\cR_n(i)-\overline{\cR}_n)^2>0,
\end{align}
 then we have $T_n({\bf X})=\Theta_p(1)$ for all $(\beta,B)\in \Theta$. Consequently we have $(\hat{\beta}_n-\beta)^2+(\hat{B}_n-B)^2=O_p(\frac{1}{n})$.
\end{thm}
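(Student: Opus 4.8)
\textbf{Proof proposal for Theorem \ref{thm:basak2}.}

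The plan is to leverage Theorem \ref{thm:pl}(c) and Corollary \ref{cor:rootn}: it suffices to show that under the mean-field condition \eqref{eq:mean_field} together with the row-sum irregularity condition \eqref{eq:irregular}, one has $T_n({\bf X}) = \Theta_p(1)$, i.e.\ both $T_n({\bf X}) = O_p(1)$ and $1 = O_p(T_n({\bf X}))$. The upper bound $T_n({\bf X}) = O_p(1)$ is the easy direction: since $|m_i({\bf x})| \le \cR_n(i) \le \gamma$ by \eqref{eq:bd}, we have $T_n({\bf x}) = \frac1n \sum_i (m_i({\bf x}) - \bar m({\bf x}))^2 \le \frac1n \sum_i m_i({\bf x})^2 \le \gamma^2$ deterministically, so only the lower bound requires work.

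For the lower bound, the natural idea is to compare $m_i({\bf X})$ with its "mean-field proxy." Write $\mu := \E_{n,\beta,B}[X_1 \text{ marginal-type quantity}]$; more precisely, the mean-field approximation suggests that each $X_i$ behaves like a variable with mean close to some common value $t^*$ solving a fixed-point equation, so that $m_i({\bf X}) \approx t^* \cR_n(i)$. Then $m_i({\bf X}) - \bar m({\bf X}) \approx t^*(\cR_n(i) - \overline{\cR}_n)$, and hence $T_n({\bf X}) \approx (t^*)^2 \cdot \frac1n\sum_i(\cR_n(i) - \overline{\cR}_n)^2$, which is bounded below by \eqref{eq:irregular} provided $t^* \ne 0$ — and $t^* \ne 0$ precisely because $B \ne 0$ forces the magnetization to be bounded away from $0$. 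The rigorous version of "$m_i({\bf X}) \approx t^*\cR_n(i)$" should come from concentration: under \eqref{eq:mean_field}, results of the type in \cite{BasakM} (or a direct Gibbs-measure / exchangeable-pair argument) give that $\frac1n \sum_i (m_i({\bf X}) - t^*\cR_n(i))^2 \stackrel{p}{\to} 0$, or at least that this quantity is $o_p(1)$, which suffices. Concretely I would: (i) identify the scalar fixed-point equation governing the "average magnetization" $\bar X$ or a weighted version, and show its relevant root $t^*$ satisfies $|t^*|$ bounded below by a positive constant depending only on $(\beta, B, \gamma)$ using $B \ne 0$; (ii) establish an $L^2$ (or in-probability) concentration statement $\frac1n\sum_i \big(m_i({\bf X}) - t^* \cR_n(i)\big)^2 = o_p(1)$, the heart of which is controlling $\mathrm{Var}(m_i({\bf X}))$ and cross-covariances $\mathrm{Cov}(X_j, X_k)$, summed against the matrix entries, and this is exactly where \eqref{eq:mean_field} enters; (iii) combine via the triangle inequality in $\ell^2(\frac1n \sum_i \cdot)$: $\sqrt{T_n({\bf X})} \ge |t^*|\sqrt{\frac1n\sum_i(\cR_n(i) - \overline{\cR}_n)^2} - \sqrt{\frac1n\sum_i(m_i({\bf X}) - t^*\cR_n(i))^2} - |\text{centering error}|$, where the first term is $\ge c > 0$ by \eqref{eq:irregular} and the remaining terms are $o_p(1)$.

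The main obstacle I expect is step (ii), the concentration estimate $\frac1n\sum_i (m_i({\bf X}) - t^*\cR_n(i))^2 = o_p(1)$. One cannot simply appeal to the behavior of the scalar magnetization $\bar X$; one needs the \emph{local} fields $m_i({\bf X})$ to concentrate around $t^*\cR_n(i)$ simultaneously in an averaged sense, and since the off-diagonal structure of $A_n$ couples the coordinates, this requires bounding $\sum_{i}\sum_{j,k} A_n(i,j)A_n(i,k)\,\mathrm{Cov}(X_j,X_k)$. The diagonal-in-$(j,k)$ part is controlled by $\sum_i \sum_j A_n(i,j)^2 = \sum_{i,j}A_n(i,j)^2 = o(n)$ via \eqref{eq:mean_field}, but the genuine covariances $\mathrm{Cov}(X_j,X_k)$ for $j \ne k$ need a separate decay input — typically via a Gibbs-measure comparison, a log-Sobolev / Dobrushin-type argument in the appropriate regime, or by quoting the relevant lemmas from \cite{BasakM} on the asymptotics of the normalizing constant and the resulting concentration of the empirical distribution of $(m_i({\bf X}))$. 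A clean route is to show that the empirical measure $\frac1n\sum_i \delta_{(X_i, \cR_n(i))}$ concentrates, from which both the fixed-point identification and the averaged concentration follow; fitting this into the existing framework of \cite{BasakM}, and checking that \eqref{eq:bd} plus \eqref{eq:mean_field} are exactly the hypotheses needed there, is the technical crux. Once (ii) is in hand, steps (i) and (iii) are short, and the conclusion $(\hat\beta_n - \beta)^2 + (\hat B_n - B)^2 = O_p(1/n)$ follows immediately from Corollary \ref{cor:rootn}.
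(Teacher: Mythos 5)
The easy upper bound $T_n(\mathbf{X}) = O_p(1)$ is fine. The gap is in step (ii), and it is not merely technical: the claimed concentration $\frac{1}{n}\sum_{i=1}^n\bigl(m_i(\mathbf{X}) - t^*\cR_n(i)\bigr)^2 = o_p(1)$ for a scalar $t^*$ is \emph{false} in exactly the regime covered by \eqref{eq:irregular}. The scalar mean-field ansatz ``all $X_j$ have common mean $t^*$'' is self-inconsistent when $\cR_n(\cdot)$ is non-degenerate: if $m_i(\mathbf{X}) \approx t^*\cR_n(i)$, then the conditional means $b_i(\mathbf{X}) = \tanh(\beta t^*\cR_n(i)+B)$ vary with $i$, so the $X_j$ cannot all have mean $t^*$, and feeding this back into $m_i = \sum_j A_n(i,j)X_j$ destroys the ansatz. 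Concretely, take the bi-regular bipartite example with $p\ne\frac12$: the limiting profile of $(m_i)$ is a two-valued vector determined by a coupled fixed point $u = \tanh(\beta v/(2p)+B)$, $v = \tanh(\beta u/(2(1-p))+B)$, and unless $u=v$ (which fails for $p\ne\frac12$) there is no $t^*$ for which $m_i\approx t^*\cR_n(i)$ holds in averaged $\ell^2$; the error term in your triangle inequality is then $\Theta_p(1)$, not $o_p(1)$, and step (iii) gives nothing. Your step (i) is also incomplete: what the argument actually needs is not ``$t^*\ne 0$'' but ``there is no scalar $t$ solving $\tanh(\beta t\theta + B) = t$ for $\mu$-a.e.\ $\theta$,'' which uses \emph{both} $B\ne 0$ (to rule out $t=0$) \emph{and} non-degeneracy of $\mu$ (to rule out $t\ne 0$).

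The paper avoids this by running the logic as a contradiction rather than as a direct approximation. It proves the large-deviation/gradient estimate of Theorem \ref{lem:basak}(b), namely $\|\nabla f_n(b(\mathbf{X})) - \nabla I(b(\mathbf{X}))\| = o_p(\sqrt{n})$, and then argues: \emph{if} $T_n(\mathbf{X})$ were small, then $b(\mathbf{X})$ would be $\ell^2$-close to a constant vector $\bar y\,\mathbf{1}$; but at any constant vector the $i$th gradient component is $\beta\bar y\,\cR_n(i) + B - \arctanh(\bar y)$, whose average square is bounded below by $\inf_{t\in[-p,p]}\int(\beta t\theta + B - \arctanh t)^2\,d\mu(\theta) > 0$ (positive precisely by $B\ne 0$ together with \eqref{eq:irregular}). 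This gives $\|\nabla f_n - \nabla I\|\gtrsim\sqrt{n}$ on the bad event, contradicting the LDP. In other words, the paper exploits the very \emph{impossibility} of your scalar approximation to force $T_n$ away from zero, whereas you try to assert that approximation outright. To repair your proposal you would need to replace (ii) by the gradient concentration of Theorem \ref{lem:basak}(b) (or an equivalent nonlinear-LDP statement from \cite{CD}/\cite{BasakM}/\cite{EG}) and then argue by contradiction as above.
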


Note that \eqref{eq:irregular} and \eqref{eq:bd} together imply \eqref{eq:non_trivial}. Thus if $A_n$ is the scaled adjacency matrix of a graph $G_n$ with $E(G_n)\gg n$, the pseudo-likelihood is $\sqrt{n}$ consistent whenever the graph $G_n$ is slightly irregular. In particular, the pseudo-likelihood is $\sqrt{n}$ consistent in the following examples:

\begin{enumerate}
\item[(c)]
$G_n$ is a convergent sequence of dense graphs converging to the graphon $W$ such that the function $\cR(x):=\int_0^1 W(x,y)dy$ is not constant almost surely Lebesgue measure. In this case we have
$$\lim_{n\rightarrow\infty}\frac{1}{n}(\cR_n(i)-\bar{\cR}_n)^2=\int_0^1 (\cR(x)-\int_0^1 \cR(y)dy)^2 dx>0,$$
and so \eqref{eq:irregular} holds. Also, as previously verified, \eqref{eq:bd} and \eqref{eq:mean_field} holds as well. Thus we have the pseudo-likelihood estimator is $\sqrt{n}$ consistent on $\Theta$.

\item[(d)]
$G_n$ is a bi-regular bipartite graph with parameters $a_n,b_n,c_n,d_n,p$  as defined above, such that
$\lim_{n\rightarrow\infty}(c_n+d_n)=\infty$, and $p\ne \frac{1}{2}$.

In this case \eqref{eq:bd} and \eqref{eq:mean_field} were verified before. Finally, we have
$\cR_n(i)=\frac{n}{a_n}\sim \frac{1}{2p}$ for $i\in G_{n1}$, and $\cR_n(i)\sim \frac{1}{2(1-p)}$ for $i\in G_{n2}$. This gives
$$\frac{1}{n}\sum_{i=1}^n(\cR_n(i)-\bar{\cR}_n)^2\sim \frac{a_n}{n}\Big(\frac{1}{2p}-1\Big)^2+\frac{b_n}{n}\Big(\frac{1}{2(1-p)}-1\Big)^2\rightarrow\frac{(2p-1)^2}{4p(1-p)}>0 .$$ Thus we have the pseudo-likelihood estimator is $\sqrt{n}$ consistent on $\Theta$. Note that if $p=\frac{1}{2}$ the graph $G_n$ is asymptotically regular.

\end{enumerate}

This raises the natural question as to what happens for regular graphs. The following theorem addresses this question by showing that whenever the coupling matrix $A_n$ is mean field and asymptotically regular, the random variable $T_n({\bf X})$ is $o_p(1)$.

\begin{thm}\label{basak}
Suppose ${\bf X}=(X_1,\cdots,X_n)$ is an observation from the Ising model \eqref{eq:ising}, where the coupling matrix $A_n$ satisfies \eqref{eq:bd} and \eqref{eq:mean_field}.
If 
\begin{align}\label{eq:regular}
\lim_{n\rightarrow\infty}\frac{1}{n}\sum_{i=1}^n(\cR_n(i)-\overline{\cR}_n)^2=0,
\end{align}
 then we have $T_n({\bf X})=o_p(1)$ for all $(\beta,B)\in\Theta$.
\end{thm}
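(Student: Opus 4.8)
The plan is to convert $T_n(\mathbf X)$ into a single quadratic form in $\mathbf X$ and then pin that form down using the limiting free energy of the Ising model and of a one-parameter tilt of it; the decisive analytic input is that $B\ne 0$ makes the relevant scalar variational problem non-degenerate, so that the free energy is smooth and the quadratic form concentrates. First, since $|m_i(\mathbf x)|\le\sum_j A_n(i,j)=\cR_n(i)\le\gamma$, we have $T_n(\mathbf x)\le\tfrac1n\sum_i\cR_n(i)^2\le\gamma\cdot\tfrac1n\sum_{i,j}A_n(i,j)$, so along any subsequence with $\tfrac1n\sum_{i,j}A_n(i,j)\to 0$ there is nothing to prove; otherwise, passing to a subsequence, I may assume $\overline{\cR}_n\to\overline{\cR}\in(0,\gamma]$, and \eqref{eq:regular} then forces the vector of row sums $\mathcal R_n:=A_n\mathbf 1$ to satisfy $\|\mathcal R_n-\overline{\cR}_n\mathbf 1\|=o(\sqrt n)$. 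Writing $\bar m:=\tfrac1n\sum_i m_i(\mathbf x)=\tfrac1n\langle\mathbf 1,A_n\mathbf x\rangle$, completing the square gives the identity
\[
nT_n(\mathbf x)=\bigl\|A_n\mathbf x-\bar m\,\mathbf 1\bigr\|^2=\mathbf x'A_n^2\mathbf x-n\bar m^2\;\ge\;0,
\]
so in particular $\tfrac1n\mathbf x'A_n^2\mathbf x\ge\bar m^2$, and it suffices to show that $\tfrac1n\mathbf X'A_n^2\mathbf X$ and $\bar m^2$ both converge in probability to one and the same constant.

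For $\bar m$: since $\bar m=\tfrac1n\langle\mathcal R_n,\mathbf x\rangle$ and $\|\mathcal R_n-\overline{\cR}_n\mathbf 1\|=o(\sqrt n)$, we have $\bar m=\overline{\cR}_n\,\bar X+o(1)$ deterministically (where $\bar X:=\tfrac1n\sum_i X_i$), so it is enough that $\bar X$ concentrates. By the mean-field free-energy formula of \cite{BasakM}, $\tfrac1n\log Z_n(\beta,B')\to F(B')$ for every $B'\in\R$, where, using \eqref{eq:regular} to collapse the mean-field variational problem to a scalar one, $F(B')=\sup_{s\in[-1,1]}\varphi_{B'}(s)$ with $\varphi_{B'}(s):=\tfrac{\beta\overline{\cR}}2 s^2+B's-\eta(s)$ and $\eta(s):=\tfrac{1+s}2\log(1+s)+\tfrac{1-s}2\log(1-s)$ — the Curie--Weiss free energy with effective inverse temperature $\beta\overline{\cR}$. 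Because $B\ne 0$, $\varphi_B$ has a unique maximiser $t^\ast$, hence $F$ is differentiable at $B$; a Chernoff bound $\P_{n,\beta,B}(\bar X\ge t^\ast+\delta)\le\exp\!\bigl(n[-\lambda(t^\ast+\delta)+\tfrac1n\log(Z_n(\beta,B+\lambda)/Z_n(\beta,B))]\bigr)$ has exponent tending to $-\lambda\delta+o(\lambda)$ as $\lambda\downarrow 0$ (and symmetrically with $\lambda<0$ for the lower tail), so $\bar X\to t^\ast$ in probability and $\bar m^2\to\ell:=(\overline{\cR}\,t^\ast)^2$.

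For $\tfrac1n\mathbf X'A_n^2\mathbf X$, the lower bound $\tfrac1n\mathbf X'A_n^2\mathbf X\ge\bar m^2\to\ell$ is already available, so only the matching upper bound is needed. I would tilt by $\epsilon>0$: $\E_{n,\beta,B}[e^{\frac\epsilon2\mathbf X'A_n^2\mathbf X}]=Z_n(\epsilon)/Z_n(0)$, where $Z_n(\epsilon)$ is the partition function of the Ising model with field $B$ and coupling matrix $\beta A_n+\epsilon(A_n^2-\operatorname{diag}(A_n^2))$ (deleting the diagonal changes the Hamiltonian only by the constant $\tfrac\epsilon2\operatorname{tr}(A_n^2)=o(n)$). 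One checks that this matrix has non-negative entries, row sums $\le\beta\gamma+\epsilon\gamma^2$, satisfies \eqref{eq:mean_field} (its squared Hilbert--Schmidt norm is $O(\operatorname{tr}(A_n^2)+\operatorname{tr}(A_n^4))=o(n)$, using \eqref{eq:mean_field} and $\|A_n\|_2\le\gamma$), and is asymptotically regular (its row sums lie within $o(\sqrt n)$ in $\ell^2$-norm of the constant $\beta\overline{\cR}_n+\epsilon\overline{\cR}_n^2$, since $\|A_n(\mathcal R_n-\overline{\cR}_n\mathbf 1)\|\le\gamma\|\mathcal R_n-\overline{\cR}_n\mathbf 1\|=o(\sqrt n)$). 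Thus \cite{BasakM} and the same collapse give $\tfrac1n\log Z_n(\epsilon)\to F(\epsilon):=\sup_s[\tfrac{\beta\overline{\cR}+\epsilon\overline{\cR}^2}2 s^2+Bs-\eta(s)]$; since $B\ne0$ the maximiser stays unique for $\epsilon\ge0$, so $F$ is differentiable at $0$ with $F'(0)=\tfrac{\overline{\cR}^2}2(t^\ast)^2=\ell/2$. Then for every $\delta>0$, $\P_{n,\beta,B}(\tfrac1n\mathbf X'A_n^2\mathbf X>\ell+\delta)\le\exp\!\bigl(n[-\tfrac\epsilon2(\ell+\delta)+\tfrac1n\log(Z_n(\epsilon)/Z_n(0))]\bigr)$, whose exponent tends to $-\tfrac\epsilon2(\ell+\delta)+F(\epsilon)-F(0)=-\tfrac{\epsilon\delta}2+o(\epsilon)$, which is negative for small $\epsilon$. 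Hence $\tfrac1n\mathbf X'A_n^2\mathbf X\to\ell$ in probability, so $T_n(\mathbf X)=\tfrac1n\mathbf X'A_n^2\mathbf X-\bar m^2\to0$ in probability along the subsequence; as the subsequence was arbitrary, $T_n(\mathbf X)=o_p(1)$.

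The main obstacle is the input from \cite{BasakM}: one needs the mean-field free-energy limit not only for $A_n$ but for the tilted couplings $\beta A_n+\epsilon A_n^2$, and — crucially — one needs it to collapse to the one-dimensional Curie--Weiss problem. This collapse uses \eqref{eq:regular} (so that the optimal local-magnetisation profile in the mean-field variational problem is spatially constant once all row sums are asymptotically equal — this is exactly where \eqref{eq:regular}, rather than \eqref{eq:irregular}, enters, and is the reverse of the mechanism behind Theorem \ref{thm:basak2}) together with $B\ne 0$ (so that, on a possibly disconnected $A_n$, the aligned profile strictly dominates the phase-separated ones). The second, related delicate point — and the reason $(\beta,B)\in\Theta$ cannot be weakened — is the non-degeneracy of the scalar problem: it is precisely $B\ne0$ that makes $\varphi_B$ and its $\epsilon$-tilt have a unique maximiser for every effective inverse temperature, hence makes $F$ differentiable and the Chernoff estimates above effective; for $B=0$ and $\beta$ supercritical, $F$ has a corner and on a disconnected $A_n$ (for instance two equal scaled complete graphs) the Ising measure splits into phases, on half of which $T_n$ stays bounded away from $0$.
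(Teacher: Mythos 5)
Your strategy is genuinely different from the paper's and, modulo one concrete gap, it is sound. You rewrite $nT_n(\mathbf X)=\mathbf X'A_n^2\mathbf X-n\bar m(\mathbf X)^2$ exactly, reduce $\bar m(\mathbf X)$ to $\overline{\cR}_n\bar X_n+o(1)$ using the $\ell^2$-smallness of $\cR_n-\overline{\cR}_n\mathbf 1$ from \eqref{eq:regular}, and then pin down both $\bar X_n$ and $\frac1n\mathbf X'A_n^2\mathbf X$ by Chernoff bounds whose exponents are controlled by one-sided derivatives of a limiting free energy $F(\cdot)$; the non-degeneracy needed for those derivatives comes from $B\ne0$ via Lemma \ref{lem:brazil}. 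Your verification of \eqref{eq:bd}, \eqref{eq:mean_field}, and the asymptotic regularity of the tilted coupling $\beta A_n+\epsilon(A_n^2-\mathrm{diag}(A_n^2))$ is correct (using $\|A_n\|_2\le\gamma$, $\mathrm{tr}(A_n^4)\le\gamma^2\,\mathrm{tr}(A_n^2)=o(n)$, and $\mathrm{tr}(A_n^2)=o(n)$). The paper instead compares $m_i$ with $b_i=\tanh(\beta m_i+B)$, invokes the large-deviation estimate of Theorem \ref{lem:basak}, and shows directly that profiles $\mathbf y$ with $\sum(y_i-\bar y)^2>n\delta$ have strictly deficient free energy. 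Your Chernoff packaging replaces Theorem \ref{lem:basak} and the $b_i$-comparison, which is arguably cleaner once the free-energy input is available; the paper's version, by contrast, needs no tilted couplings.

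The genuine gap is the step you flag yourself: the assertion that \eqref{eq:regular} causes the BasakM variational problem $\frac1n\sup_{\mathbf y\in[-1,1]^n}\{f_n(\mathbf y)-I(\mathbf y)\}$ to collapse to the scalar Curie--Weiss problem $\sup_{s}\{\tfrac{\beta\overline{\cR}}2s^2+Bs-\eta(s)\}$, for both $A_n$ and the $\epsilon$-tilted couplings. BasakM's theorem supplies only the $n$-dimensional variational formula, and the lower bound (restriction to constant profiles) is immediate; the matching upper bound, i.e.\ that non-constant profiles cannot do better, is \emph{not} a cited fact but is exactly the truncation argument leading from \eqref{eq:t_bound} to \eqref{eq:1_up}--\eqref{eq:1_low} in the paper. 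You cannot invoke the collapse as a black box; you must prove the bound ${\bf y}'A_n{\bf y}\le(\theta+t)\|{\bf y}\|^2+o(n)$ uniformly over ${\bf y}\in[-1,1]^n$ by pruning the $o(n)$ many rows with $|\cR_n(i)-\theta|>t$, and the same for the tilted coupling. Once that deterministic lemma is supplied, your derivative computations $F'(B)=t^*$ and $F'(0^+)=\overline{\cR}^2(t^*)^2/2$ and the two Chernoff bounds close the argument; without it, the claim $\frac1n\log Z_n(\epsilon)\to F(\epsilon)$ with $F$ given by the scalar supremum is unproven. So the approach is correct and distinct, but the technical heart of the theorem has been assumed rather than established.
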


Theorem \ref{basak} along with the upper bound of Theorem  \ref{thm:pl} together suggest that $\sqrt{n}$ consistency may not be attained by the pseudo-likelihood estimator for asymptotically regular graphs with degree going to $+\infty$.  The following theorem confirms this conjecture for the special case when $G_n$ is an Erd\"os-Renyi graph with parameter $p>0$, free of $n$. 

\begin{thm}\label{thm:cw}
Suppose ${\bf X}=(X_1,\cdots,X_n)$ is an observation from the Ising model \eqref{eq:ising}, where the coupling matrix is $A_n(i,j)=\frac{1}{(n-1)p}G_n(i,j)$, where $G_n$ is a random graph from $\cG(n,p)$, the Erd\"os-Renyi graph with parameter $p>0$, free of $n$.  Let $t\in (0,1)$ be fixed, and let $$\Theta_t:=\{(\theta,\beta)\in (0,\infty)^2: t=\tanh(\beta t+B)\}.$$ Let $\P^{\mathrm{er}}_{n,\beta,B}$ denote the joint law of $({\bf X})$ and $G_n$ on $\{-1,1\}^n\times \{0,1\}^{n\choose 2}$. Then, setting $\Q_n$ to be product measure on $\{-1,1\}^n$ under which $\Q_n(X_i=1)=\frac{1}{1+e^{-2t}}=1-\Q_n(X_i=-1)$, we have that $\Q_n\times \cG(n,p)$ is contiguous to $\P_{n,\beta,B}^{\mathrm{er}}$ for every $(\beta,B)\in \Theta_t$.
Consequently, under $\P_{n,\beta,\B}^{\mathrm{er}}$ there does not exist any sequence of estimates (functions of $({\bf X},G_n)$) which is consistent for $(\beta,B)$ in $\Theta_t$ (and hence in $\Theta$).
\end{thm}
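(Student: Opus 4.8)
The plan is to prove the contiguity assertion by the second moment method (Le Cam's first lemma), and then obtain the impossibility of consistent estimation as a formal consequence. Write $\mathbb{P}_n:=\P^{\mathrm{er}}_{n,\beta,B}$ and $\M_n:=\Q_n\times\cG(n,p)$, let $A_n=A_n(G):=\tfrac{1}{(n-1)p}G$, and let $Z_n(\beta,B;G)$ be the normalizing constant in \eqref{eq:ising} for this $A_n$. Both $\mathbb{P}_n$ and $\M_n$ charge every configuration, so $\M_n\ll\mathbb{P}_n$ and $\E_{\mathbb{P}_n}[L_n]=1$ for $L_n:=d\M_n/d\mathbb{P}_n$; by Le Cam's first lemma it is enough to show $\sup_n\E_{\mathbb{P}_n}[L_n^2]<\infty$, since this makes $\{L_n\}$ uniformly integrable under $\mathbb{P}_n$, forcing every subsequential weak limit of $L_n$ to have mean $1$, i.e.\ contiguity of $\M_n$ to $\mathbb{P}_n$. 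Writing the densities explicitly (the $\cG(n,p)$ factor cancels), with $c$ the natural parameter of $\Q_n$ (so $\Q_n(\mathbf x)=e^{c\sum_i x_i}/(2\cosh c)^n$),
\[
L_n(\mathbf x,G)=\frac{\Q_n(\mathbf x)}{\P_{n,\beta,B}(\mathbf x\mid G)}=\frac{Z_n(\beta,B;G)}{(2\cosh c)^n}\exp\!\Big(-\tfrac\beta2\mathbf x'A_n\mathbf x+(c-B)\textstyle\sum_i x_i\Big),
\]
so that $\E_{\mathbb{P}_n}[L_n^2]=\E_{\M_n}[L_n]=\E_{G\sim\cG(n,p)}\big[(2\cosh c)^{-2n}\,Z_n(\beta,B;G)\,\widetilde Z_n(G)\big]$, where $\widetilde Z_n(G):=\sum_{\mathbf x}\exp\big(-\tfrac\beta2\mathbf x'A_n\mathbf x+(2c-B)\sum_i x_i\big)$ is an ``antiferromagnetically tilted'' partition function. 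Everything reduces to a two-sided estimate for the random product $Z_n\widetilde Z_n$.

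Next I would average out the graph. Write $G(i,j)=p+\xi_{ij}$ with $(\xi_{ij})_{i<j}$ i.i.d., mean zero, bounded, so that $\tfrac\beta2\mathbf x'A_n\mathbf x=\tfrac{\beta}{2(n-1)}\big((\sum_i x_i)^2-n\big)+\Xi(\mathbf x)$ with $\Xi(\mathbf x):=\tfrac{\beta}{(n-1)p}\sum_{i<j}\xi_{ij}x_ix_j$; the first piece is exactly the Curie--Weiss energy. Since $\Xi$ enters $Z_n$ with a $+$ sign and $\widetilde Z_n$ with a $-$ sign, expanding $Z_n\widetilde Z_n$ as a double sum over $(\mathbf x,\mathbf y)$ and using independence of the $\xi_{ij}$ produces, for each $(\mathbf x,\mathbf y)$, a product over the $\binom n2$ pairs of factors $\E\big[\exp\big(\tfrac{\beta}{(n-1)p}\xi_{ij}(x_ix_j-y_iy_j)\big)\big]$; each exponent is $O(1/n)$ and $\E\xi_{ij}=0$, so each factor is $1+O(1/n^2)$ uniformly in $(\mathbf x,\mathbf y)$, and the product over $O(n^2)$ pairs is bounded by an absolute constant $e^{C}$. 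Hence $\E_G[Z_n\widetilde Z_n]\le e^{C}\,Z_n^{\mathrm{cw}}\widetilde Z_n^{\mathrm{cw}}$, where $Z_n^{\mathrm{cw}},\widetilde Z_n^{\mathrm{cw}}$ are the corresponding complete-graph partition functions, and it remains to bound $(2\cosh c)^{-2n}Z_n^{\mathrm{cw}}\widetilde Z_n^{\mathrm{cw}}$.

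For the Curie--Weiss step: up to the harmless factor $e^{\mp\beta n/2(n-1)}$ one has $Z_n^{\mathrm{cw}}=\sum_k\binom nk\exp\big(\tfrac{\beta}{2(n-1)}(2k-n)^2+B(2k-n)\big)$, and similarly $\widetilde Z_n^{\mathrm{cw}}$ with $\beta\mapsto-\beta$ and field $2c-B$. Stirling's formula for the binomials together with Laplace's method (equivalently, a Hubbard--Stratonovich representation) give $Z_n^{\mathrm{cw}}$ of the form $e^{nF_+}$ and $\widetilde Z_n^{\mathrm{cw}}$ of the form $e^{nF_-}$ with \emph{constant} prefactors, where $F_{\pm}=\sup_{m\in[-1,1]}\phi_{\pm}(m)$, $\phi_+(m)=\tfrac\beta2 m^2+Bm+h(m)$, $\phi_-(m)=-\tfrac\beta2 m^2+(2c-B)m+h(m)$, $h(m)=-\tfrac{1+m}2\log\tfrac{1+m}2-\tfrac{1-m}2\log\tfrac{1-m}2$; moreover $(2\cosh c)^{2n}=e^{2nF_0}$ exactly with $F_0=\sup_m(cm+h(m))$ by Legendre duality. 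The crucial identity is $F_++F_-=2F_0$: one has $\phi_+(m)+\phi_-(m)=2(cm+h(m))$ as functions, and the choice of $c$ together with the relation defining $\Theta_t$ (the mean-field fixed-point equation) is precisely what forces $\phi_+$, $\phi_-$ and $cm+h(m)$ to share a common global maximizer $m^\ast$ — for $\phi_-$ and $cm+h(m)$ this is automatic by strict concavity, while for $\phi_+$ one checks that $(\beta,B)\in\Theta_t\subset(0,\infty)^2$ lands in the single-peak regime, so its unique non-degenerate global maximizer (a solution of $m=\tanh(\beta m+B)$) coincides with $m^\ast$. Then the exponential rates cancel and $\E_{\mathbb{P}_n}[L_n^2]\le e^{C}(2\cosh c)^{-2n}Z_n^{\mathrm{cw}}\widetilde Z_n^{\mathrm{cw}}=O(1)$, which yields the contiguity.

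Finally, for the impossibility: $\Theta_t$ is a nondegenerate segment in $(0,\infty)^2$, hence contains two distinct points $\theta_1\ne\theta_2$. If $\widehat\theta_n=\widehat\theta_n(\mathbf X,G_n)$ were consistent for $(\beta,B)$ on $\Theta_t$, then for $E_n:=\{\|\widehat\theta_n-\theta_1\|<\tfrac12\|\theta_1-\theta_2\|\}$ we would have $\P^{\mathrm{er}}_{n,\theta_2}(E_n)\to0$ and $\P^{\mathrm{er}}_{n,\theta_1}(E_n^c)\to0$; since $\M_n=\Q_n\times\cG(n,p)$ does not depend on the parameter and is contiguous to both $\P^{\mathrm{er}}_{n,\theta_1}$ and $\P^{\mathrm{er}}_{n,\theta_2}$, this would force $\M_n(E_n)\to1$ and $\M_n(E_n)\to0$ simultaneously, a contradiction; as $\Theta_t\subset\Theta$, consistency on $\Theta$ is excluded too. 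I expect the Curie--Weiss step to be the main obstacle: getting the Stirling/Laplace asymptotics of $Z_n^{\mathrm{cw}}$ and $\widetilde Z_n^{\mathrm{cw}}$ with \emph{constant} (not merely sub-exponential) prefactors, verifying that membership in $\Theta_t$ really does put $(\beta,B)$ in the single-peak regime for $\phi_+$ with maximizer $m^\ast$, and — the conceptual core — recognizing that the cancellation $F_++F_-=2F_0$, which is exactly what keeps the second moment bounded rather than exponentially large, is forced by the fixed-point relation defining $\Theta_t$.
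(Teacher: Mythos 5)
Your proposal is correct in outline, and it takes a genuinely different route from the paper's. The paper factors the contiguity into two separate steps: first it shows $\Q_n$ is contiguous to the Curie--Weiss law $\P^{\mathrm{cw}}_{n,\beta,B}$ via the Hubbard--Stratonovich auxiliary variable $W_n$ of \cite{MMY}, Taylor-expanding the conditional probability $\P^{\mathrm{cw}}_{n,\beta,B}(X_i=1\mid W_n)$ around $W_n=t$ and bounding the likelihood ratio on high-probability sets; then it shows $\P^{\mathrm{cw}}_{n,\beta,B}\times\cG(n,p)$ is contiguous to $\P^{\mathrm{er}}_{n,\beta,B}$ by bounding the Kullback--Leibler divergence using Jensen's inequality and the sub-Gaussian MGF bound $\E e^{t(\mathrm{Bin}(1,p)-p)}\le e^{t^2p(1-p)/2}$. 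You instead compute the $\chi^2$-divergence in one shot, $\E_{\P_n}[L_n^2]=\E_G\big[(2\cosh c)^{-2n}Z_n\widetilde Z_n\big]$, average out the graph by independence of edges (which is the same Hoeffding MGF bound as in the paper's step (b), applied to the pair $(\mathbf x,\mathbf y)$ rather than a single configuration), and then close the argument with a Laplace-method computation showing the rates cancel. The one-shot $\chi^2$ approach is more self-contained, avoiding both the $W_n$ representation and the ``bounded KL implies contiguity'' fact the paper cites from \cite{BhattacharyaM}; the price is that you must extract constant (not merely sub-exponential) prefactors in the Curie--Weiss asymptotics and verify the exact cancellation $F_++F_-=2F_0$, which you correctly flag as the conceptual core.

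One precision worth adding to your sketch: ``single-peak regime'' is a slight misnomer for $\phi_+$. For $\beta>1$ the function $\phi_+(m)=\tfrac\beta2 m^2+Bm+h(m)$ is convex on $(-m_c,m_c)$ with $m_c=\sqrt{1-1/\beta}$ and can have \emph{three} critical points; what you actually need, and what does hold, is that $t$ is the unique \emph{global} maximizer and that the maximum is non-degenerate. For the former, note $\phi_+'(0)=B>0$ and $\phi_+'$ is increasing on $(0,m_c)$, so $t$ is the unique positive critical point; and if $m_1<0$ is another local maximizer, then $\phi_+(m_1)<\phi_+(-m_1)\le\phi_+(t)$ since $\phi_+(m)-\phi_+(-m)=2Bm>0$ for $m>0$. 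For non-degeneracy, $\phi_+''(t)=\beta-\tfrac{1}{1-t^2}<0$ follows from $B>0\Rightarrow\beta<\arctanh(t)/t<\tfrac{1}{1-t^2}$ (the second inequality by comparing power series). The fixed-point relation $\arctanh t=\beta t+B$ then forces $c=\arctanh t=\beta t+B$, which makes $t$ a common maximizer of $\phi_0$, $\phi_-$ (both strictly concave) and $\phi_+$. Also note the theorem's stated marginal $\Q_n(X_i=1)=\tfrac{1}{1+e^{-2t}}$ should be read as $\tfrac{1}{1+e^{-2(\beta t+B)}}=\tfrac{1+t}{2}$, i.e.\ the natural parameter is $c=\arctanh t$, consistent with the $\alpha$ appearing in the paper's proof of part (a); with $c=t$ literally the cancellation $F_++F_-=2F_0$ would fail. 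The impossibility-of-consistency deduction from contiguity is the same as the paper's.
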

\begin{remark}
It was pointed out in \cite{comets_mle} that the MLE for $(\beta,B)$ doesn't exist for the Curie Weiss model. The above Theorem extends this by showing that consistent estimates do not exist when the underlying graph is Erd\"os-Renyi. Note that if we set $p=1$ in the Erd\"os-Renyi model we get a complete graph on $n$ vertices, which corresponds to the Curie-Weiss model. We conjecture that there are no $\sqrt{n}$ consistent estimates for both parameters whenever the graph sequence is regular with degree going to $+\infty$.
\end{remark}
%
%

If the average degree of a graph sequence does not go to $+\infty$, joint estimation of both parameters at rate $\sqrt{n}$ is always possible, as shown in the following theorem.

\begin{thm}\label{ppn:z1}
Suppose ${\bf X}=(X_1,\cdots,X_n)$ is an observation from the Ising model \eqref{eq:ising}, where the coupling matrix $A_n$ satisfies \eqref{eq:bd} and
\begin{align}\label{eq:Sparseness}
\liminf_{n\to \infty}\frac{1}{n}\sum_{i,j=1}^n A^2_n(i,j) >0. 
\end{align}
 Then, for any  $(\beta_0,B_0)\in \Theta$ we have
$\sqrt{n}(\hat{\beta}_n-\beta_0,\hat{B}_n-B_0)=O_p(1)$, i.e. the joint pseudo-likelihood estimator is $\sqrt{n}$ consistent.

\end{thm}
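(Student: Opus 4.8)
The plan is to deduce the theorem from Corollary \ref{cor:rootn}. By \eqref{eq:bd} we have $|m_i(\mathbf X)|\le\gamma$ for every $i$, so $T_n(\mathbf X)\le\gamma^2$ and $T_n(\mathbf X)=O_p(1)$ automatically; hence it suffices to prove the complementary bound $1/T_n(\mathbf X)=O_p(1)$ under $\P_{n,\beta_0,B_0}$, and since $T_n\ge 0$ this amounts to $\P_{n,\beta_0,B_0}(T_n(\mathbf X)<c)\to 0$ for some constant $c>0$. First I would reduce to the case $B_0>0$: the global spin flip $\mathbf x\mapsto-\mathbf x$ carries $\P_{n,\beta_0,B_0}$ to $\P_{n,\beta_0,-B_0}$ and leaves $T_n$ invariant, so we may assume $B_0>0$, whereupon the model is ferromagnetic with a strictly positive external field at every site and the FKG, the second Griffiths and the GHS inequalities become available.

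The first substantive step is a lower bound on the mean, $\E_{n,\beta_0,B_0}[T_n(\mathbf X)]\ge c_0>0$ for all large $n$. Writing $P:=\tfrac1n\mathbf 1\mathbf 1'$, one has $nT_n(\mathbf x)=\|(I-P)A_n\mathbf x\|^2$, and the bias-variance decomposition together with $(I-P)^2=I-P$ gives $\E[nT_n(\mathbf X)]\ge\operatorname{tr}(A_n^2\Sigma)-\tfrac1n\,\cR_n'\Sigma\,\cR_n$, where $\Sigma:=\operatorname{Cov}_{n,\beta_0,B_0}(\mathbf X)$ and $\cR_n:=(\cR_n(1),\dots,\cR_n(n))'=A_n\mathbf 1$. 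In $\operatorname{tr}(A_n^2\Sigma)=\sum_{j,k}(A_n^2)_{jk}\operatorname{Cov}(X_j,X_k)$ the off-diagonal entries contribute nonnegatively by FKG (both $(A_n^2)_{jk}\ge0$ and $\operatorname{Cov}(X_j,X_k)\ge0$), while the diagonal equals $\sum_j\big(\sum_lA_n(j,l)^2\big)\big(1-\langle X_j\rangle^2\big)$; since $|\langle X_j\rangle|=|\E\tanh(\beta_0 m_j(\mathbf X)+B_0)|\le\tanh(\beta_0\gamma+B_0)=:t_0<1$, this is at least $(1-t_0^2)\sum_{j,l}A_n(j,l)^2\ge(1-t_0^2)\,c'\,n$ for large $n$ by \eqref{eq:Sparseness}. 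For the subtracted term, $\tfrac1n\cR_n'\Sigma\cR_n=\tfrac1n\operatorname{Var}\!\big(\sum_j\cR_n(j)X_j\big)\le\tfrac{\gamma^2}{n}\operatorname{Var}\!\big(\sum_jX_j\big)$ by $0\le\cR_n(j)\le\gamma$ and $\operatorname{Cov}(X_j,X_k)\ge0$; and here GHS does the work, for it makes $B\mapsto\sum_j\langle X_j\rangle_{\beta_0,B}=\partial_B\log Z_n(\beta_0,B)$ concave on $(0,\infty)$, so $B\mapsto\partial_B^2\log Z_n=\operatorname{Var}_{\beta_0,B}(\sum_jX_j)$ is nonincreasing there, whence $B_0\operatorname{Var}_{\beta_0,B_0}(\sum_jX_j)\le\int_0^{B_0}\operatorname{Var}_{\beta_0,b}(\sum_jX_j)\,db\le\int_0^\infty\partial_b^2\log Z_n\,db=n$. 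Altogether $\E[T_n(\mathbf X)]\ge(1-t_0^2)c'-\gamma^2/(nB_0)\ge c_0$ eventually; and the same bound $\operatorname{Var}_{\beta_0,B_0}(\sum_jX_j)\le n/B_0$ also gives $\operatorname{Var}(\bar m(\mathbf X))\le\gamma^2/(nB_0)$.

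The remaining, and main, step is to pass from the mean bound to the in-probability statement, and this is where I expect the real difficulty. Since $0\le T_n(\mathbf X)\le\gamma^2$, Chebyshev reduces everything to $\operatorname{Var}(T_n(\mathbf X))\to0$ (then take $c=c_0/2$). Writing $nT_n=\mathbf X'A_n^2\mathbf X-n\bar m(\mathbf X)^2$, the $\bar m^2$-part is harmless ($\bar m$ bounded and $\operatorname{Var}(\bar m)=O(1/n)$ from the previous step, so $\operatorname{Var}(\bar m^2)=O(1/n)$), so the task is to show $\operatorname{Var}\!\big(\tfrac1n\sum_i m_i(\mathbf X)^2\big)=\tfrac1{n^2}\sum_{j,k,j',k'}(A_n^2)_{jk}(A_n^2)_{j'k'}\operatorname{Cov}(X_jX_k,X_{j'}X_{k'})\to0$. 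The terms with $\{j,k\}\cap\{j',k'\}\ne\varnothing$ contribute only $O(1/n)$ (using $\sum_k(A_n^2)_{jk}\le\gamma^2$ and $\sum_{j,k}(A_n^2)_{jk}^2\le\gamma^2\operatorname{tr}(A_n^2)=O(n)$), leaving the sum over pairwise-distinct index pairs; by the second Griffiths inequality every truncated four-point function there is $\ge0$, and what remains is to show that these truncated correlations are negligible on average once the two index-pairs are far apart in the graph underlying $A_n$ --- a genuine decay-of-correlations statement. This is exactly the place where $B_0\ne0$ is essential: by the Lee-Yang theorem the ferromagnetic Ising model has no phase transition away from zero field, hence a finite correlation length, and under \eqref{eq:bd} --- which, as recorded after \eqref{eq:non_trivial}, places $A_n$ in the framework of \cite{BasakM} --- the required quantitative weak-dependence estimate is available. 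Feeding it in gives $\operatorname{Var}(T_n(\mathbf X))\to0$, hence $T_n(\mathbf X)=\Theta_p(1)$, and Corollary \ref{cor:rootn} then yields $\sqrt n(\hat\beta_n-\beta_0,\hat B_n-B_0)=O_p(1)$. I expect this correlation-decay input (and making it uniform over the graph sequences allowed by \eqref{eq:bd}) to be the main obstacle; the mean bound is soft, relying only on classical correlation inequalities, but because \eqref{eq:Sparseness} is incompatible with the mean-field condition \eqref{eq:mean_field}, the linearization $m_i(\mathbf X)\approx\cR_n(i)\,\bar m(\mathbf X)$ that drives Theorems \ref{thm:basak2} and \ref{basak} is not available here, and the non-negligible fluctuations $\sum_jA_n(i,j)\big(X_j-\bar m(\mathbf X)\big)$ --- which are precisely what keeps $T_n$ away from $0$ --- must be controlled directly.
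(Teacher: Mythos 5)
Your first two steps are sound and in fact quite elegant. The observation $T_n(\mathbf X)\le(2\gamma)^2$ trivially gives $T_n=O_p(1)$, so only the lower bound matters. Your mean bound $\E_{n,\beta_0,B_0}[T_n(\mathbf X)]\ge c_0>0$ is a correct and complete argument: the decomposition $\E[nT_n]\ge\operatorname{tr}(A_n^2\Sigma)-\tfrac1n\cR_n'\Sigma\cR_n$, the diagonal lower bound $(1-t_0^2)\sum_{j,l}A_n(j,l)^2$ using \eqref{eq:Sparseness}, and the clean GHS-based bound $\operatorname{Var}_{\beta_0,B_0}\big(\sum_jX_j\big)\le n/B_0$ together make the subtracted term $O(1)$, hence $\E[T_n]\ge c_0$ eventually. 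None of this appears in the paper, and it is a genuinely nice use of the classical correlation inequalities.

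The gap is precisely where you flag it, but it is fatal rather than a side issue to be filled in. Chebyshev needs $\operatorname{Var}(T_n(\mathbf X))\to 0$, and after the reduction you are facing $\tfrac1{n^2}\sum(A_n^2)_{jk}(A_n^2)_{j'k'}\operatorname{Cov}(X_jX_k,X_{j'}X_{k'})\to 0$, which requires an averaged decay of truncated four-point functions, uniform over the bounded-degree graph sequence. You propose to source this from ``Lee-Yang $\Rightarrow$ finite correlation length'' together with a quantitative weak-dependence estimate from \cite{BasakM}; neither closes the gap. The Lee-Yang theorem gives analyticity of the free energy in $B$ on a fixed graph, but does not by itself produce a rate of truncated-correlation decay that is uniform over an arbitrary bounded-degree sequence $\{A_n\}$, and in fact no such uniform exponential decay at all $\beta>0$, $B\ne 0$ is a textbook consequence --- at large $\beta$ one is deep in the ``low-temperature with field'' regime, where uniqueness of the infinite-volume Gibbs state does not automatically translate to a uniform mixing rate across graphs. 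And, as you yourself note at the end, \eqref{eq:Sparseness} is the complement of \eqref{eq:mean_field}; the estimates of \cite{BasakM} live entirely in the mean-field regime and are simply not applicable here. So the variance step as written rests on a conjecture, not a citation.

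The paper avoids correlation decay altogether by a reduction to the Curie--Weiss model. It introduces the auxiliary events $C_n,D_n$ (controlled by \eqref{eq:bas1} and Lemma \ref{lem:sparse_new}) and shows that on $C_n\cap D_n\cap E_n$ the quadratic form $\mathbf x'A_n\mathbf x$ is, up to $o(n)$, the Curie--Weiss energy $n\bar{\cR}_n\bar x^2$; comparing partition functions via \cite[(1.8),(2.3)]{BasakM} then shows $\P_{n,\beta,B}(E_n)\le e^{o(n)}\,\P^{\mathrm{cw}}_{n,\beta_n,B}(E_n)$, and Lemma \ref{lem:lipschitz} finishes by showing $\P^{\mathrm{cw}}(E_n)$ decays exponentially, using the conditionally i.i.d.\ representation of Curie--Weiss and a Lipschitz concentration inequality. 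That route trades the (hard, graph-dependent) decay-of-correlations question for a concentration argument in the fully exchangeable Curie--Weiss model, where the variance of $T_n$ can be controlled directly. If you want to keep your route, you would need to actually establish the uniform weak dependence --- for instance via a Dobrushin-type condition, which would then impose an unwanted restriction to small $\beta$ --- so the paper's reduction is the right tool here.
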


Note that \eqref{eq:bd} and \eqref{eq:Sparseness} together imply \eqref{eq:non_trivial}. To see how \eqref{eq:Sparseness} captures sparse graphs, recall that for any graph with adjacency matrix $G_n$ and $A_n(i,j):=\frac{n}{2E(G_n)}G_n(i,j)$ we have
$\sum_{i,j=1}^nA_n(i,j)^2=\frac{n^2}{4 E(G_n)}.$
Thus if \eqref{eq:Sparseness} holds, then we must have $E(G_n)=O(n) $, which says that the graph sequence is sparse. In particular Theorem \ref{ppn:z1} shows $\sqrt{n}$ consistency when the underlying graph $G_n$ has a uniformly bounded degree sequence,  irrespective of whether $G_n$ is regular or not. 
\\

To complete the picture, we show that if one of the two parameters are known, then the pseudo-likelihood estimator for the other parameter is $\sqrt{n}$ consistent, for all $(\beta,B)\in \Theta$. Thus joint estimation is indeed a much harder problem than estimation of the individual parameters.  The proof of this proposition appears in the appendix.

\begin{ppn}\label{ppn:rootn}

Suppose ${\bf X}=(X_1,\cdots,X_n)$ is an observation from the Ising model \eqref{eq:ising}, where the coupling matrix $A_n$ satisfies \eqref{eq:bd} and \eqref{eq:non_trivial}.
\begin{enumerate}
\item[(a)]
If $B$ is known, then the equation $Q_n(\beta,B|{\bf x})=0$ has a unique root $\hat{\beta}_n$ which satisfies $\sqrt{n}(\hat{\beta}_n-\beta_0)=O_p(1)$ under $\P_{n,\beta_0,B}$.

\item[(b)]
If $\beta$ is known, then the equation $R_n(\beta,B|{\bf x})=0$ has a unique root $\hat{B}_n$ which satisfies $\sqrt{n}(\hat{B}_n-B_0)=O_p(1)$ under $\P_{n,\beta,B_0}$.

\end{enumerate}
\end{ppn}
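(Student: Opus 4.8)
The plan is to exploit that, once one of the two parameters is held fixed, the corresponding pseudo-likelihood equation becomes one-dimensional and \emph{strictly monotone} in the remaining parameter, so that $\sqrt{n}$-consistency reduces to two quantitative inputs: an $O_p(\sqrt{n})$ bound on the relevant pseudo-score evaluated at the true parameter, and a matching $\Theta_p(n)$ lower bound on $(-1)$ times the derivative of the estimating equation throughout a fixed neighbourhood of the truth. I would first record both score bounds. Writing $f_i:=X_i-\tanh(\beta_0 m_i(\mathbf{X})+B_0)$, one has $\E_{\beta_0,B_0}[f_i\mid X_j,\,j\ne i]=0$, hence $Q_n(\beta_0,B_0\mid\mathbf{X})=\sum_i m_i(\mathbf{X})f_i$ and $R_n(\beta_0,B_0\mid\mathbf{X})=\sum_i f_i$ are mean zero. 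To bound their variances by $O(n)$ I would use a two-spin decoupling: the diagonal terms $\E[m_i(\mathbf{X})^2 f_i^2]=\E[m_i(\mathbf{X})^2(1-\tanh^2(\beta_0 m_i(\mathbf{X})+B_0))]$ and $\E[f_i^2]$ are $O(1)$ using $|m_i(\mathbf{X})|\le\gamma$ (from \eqref{eq:bd}); and for $i\ne j$, conditioning on $(X_k)_{k\ne i,j}$ turns the law of $(X_i,X_j)$ into a two-site Ising model whose only interaction is $\beta_0 A_n(i,j)$, so that at interaction $0$ the spins decouple and $\E[f_i\mid (X_k)_{k\ne i,j}]=\E[f_j\mid (X_k)_{k\ne i,j}]=0$. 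Since $\E[m_im_jf_if_j\mid (X_k)_{k\ne i,j}]$ and $\E[f_if_j\mid (X_k)_{k\ne i,j}]$ are smooth in $A_n(i,j)$, vanish at $A_n(i,j)=0$, and have derivative bounded in terms of $\gamma,\beta_0,B_0$ only, each off-diagonal contribution is $O(A_n(i,j))$; summing and using \eqref{eq:bd} gives $\mathrm{Var}(Q_n)=O(n)$, $\mathrm{Var}(R_n)=O(n)$, hence $Q_n(\beta_0,B_0\mid\mathbf{X})=O_p(\sqrt{n})$ and $R_n(\beta_0,B_0\mid\mathbf{X})=O_p(\sqrt{n})$ (this is the analogue of the fluctuation estimate underlying Theorem~\ref{thm:pl}).

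For part (b), fix $\beta$. Since $\partial_B R_n(\beta,B\mid\mathbf{x})=-\sum_i\big(1-\tanh^2(\beta m_i(\mathbf{x})+B)\big)<0$ for every $\mathbf{x}$, the map $B\mapsto R_n(\beta,B\mid\mathbf{x})$ is strictly decreasing, and as $B\to\pm\infty$ it tends to $\sum_i(x_i\mp 1)$, which is nonzero unless $\mathbf{x}=\pm\mathbf{1}$; since $\P_{n,\beta_0,B_0}(\mathbf{X}\in\{\mathbf{1},-\mathbf{1}\})\to 0$ (a single spin flip changes the log-density by a bounded amount, so each of these configurations has probability $O(1/n)$), the root $\hat B_n$ exists and is unique with probability tending to $1$. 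For the rate, on $B\in[B_0-1,B_0+1]$ one has $|\beta m_i(\mathbf{x})+B|\le\beta\gamma+|B_0|+1$, so $1-\tanh^2(\beta m_i(\mathbf{x})+B)\ge c_0>0$ there and $\partial_B R_n\le -c_0 n$ on that interval. A mean-value expansion gives $R_n(\beta,B_0+M/(c_0\sqrt{n})\mid\mathbf{X})\le R_n(\beta,B_0\mid\mathbf{X})-M\sqrt{n}$ for $n$ large, so by monotonicity $\P_{n,\beta_0,B_0}\big(\hat B_n\ge B_0+M/(c_0\sqrt{n})\big)\le\P_{n,\beta_0,B_0}\big(R_n(\beta,B_0\mid\mathbf{X})\ge M\sqrt{n}\big)$, which tends to $0$ as $M\to\infty$ uniformly in $n$ by the score bound; the lower tail is symmetric, so $\sqrt{n}(\hat B_n-B_0)=O_p(1)$.

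For part (a), fix $B$. Here $\partial_\beta Q_n(\beta,B\mid\mathbf{x})=-\sum_i m_i(\mathbf{x})^2\big(1-\tanh^2(\beta m_i(\mathbf{x})+B)\big)$, which is $<0$ as soon as $\sum_i m_i(\mathbf{x})^2>0$ and, on any fixed $\beta$-interval, is $\le -c_0\sum_i m_i(\mathbf{x})^2$. The essential point --- and the place where the hypothesis $B_0\ne 0$ enters --- is that $\sum_i m_i(\mathbf{X})^2=\Theta_p(n)$. Since $\tfrac1n\sum_i m_i(\mathbf{x})^2=T_n(\mathbf{x})+\bar m(\mathbf{x})^2\ge\bar m(\mathbf{x})^2$ with $\bar m(\mathbf{x})=\tfrac1n\sum_j\cR_n(j)x_j$, it suffices that $\bar m(\mathbf{X})$ is bounded away from $0$ in probability. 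By Griffiths' inequality, $\mathrm{sign}(B_0)\,\E_{\beta_0,B_0}[X_i]\ge\tanh(|B_0|)$ for all $i$, whence $\mathrm{sign}(B_0)\,\E_{\beta_0,B_0}[\bar m(\mathbf{X})]\ge\tanh(|B_0|)\cdot\tfrac1n\sum_{i,j}A_n(i,j)$, which stays bounded away from $0$ by \eqref{eq:non_trivial}; by the GHS inequality (applied after a global spin flip when $B_0<0$), $B\mapsto\sum_i\E_{\beta_0,B}[X_i]$ is concave and bounded on $[0,\infty)$, so its derivative $\mathrm{Var}_{\beta_0,B_0}(\sum_iX_i)$ is $O(n)$ (indeed $\le 2n/|B_0|$), and since $\mathrm{Cov}(X_j,X_k)\ge 0$ (FKG) this yields $\mathrm{Var}(\bar m(\mathbf{X}))\le \gamma^2\,\mathrm{Var}_{\beta_0,B_0}(\sum_j X_j)/n^2\to 0$. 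Chebyshev's inequality then gives $\bar m(\mathbf{X})^2\ge\delta$ with probability $\to 1$ for some $\delta>0$; in particular $\sum_i m_i(\mathbf{X})^2>0$ with probability $\to 1$, so $Q_n(\cdot,B\mid\mathbf{X})$ is strictly decreasing there, and since its limits $\sum_i\big(m_i(\mathbf{X})x_i\mp|m_i(\mathbf{X})|\big)$ as $\beta\to\pm\infty$ are nonzero off $A_{2,n}\cup A_{3,n}$ --- a set of probability $\to 0$ by Theorem~\ref{thm:pl}(b) --- the root $\hat\beta_n$ exists and is unique with probability $\to 1$. The same mean-value/sandwich argument as in part (b), now with $\sum_i m_i(\mathbf{X})^2\ge\delta n$ in place of $n$ and using $Q_n(\beta_0,B\mid\mathbf{X})=O_p(\sqrt{n})$, gives $\sqrt{n}(\hat\beta_n-\beta_0)=O_p(1)$.

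I expect the main obstacle to be the lower bound $\sum_i m_i(\mathbf{X})^2=\Theta_p(n)$ in part (a): this is exactly what separates the present statement (valid for every $(\beta,B)\in\Theta$, with no further condition on $A_n$) from the zero-field case, where $\bar m(\mathbf{X})$ and $T_n(\mathbf{X})$ can both be $o_p(1)$ and $\hat\beta_n$ need not be $\sqrt{n}$-consistent; carrying it out cleanly is where the ferromagnetic correlation inequalities (Griffiths/GHS/FKG) and the non-degeneracy condition \eqref{eq:non_trivial} are brought in. The other technical ingredient is the $O(n)$ variance bound on the pseudo-scores, which either follows from the two-spin decoupling sketched above or may be quoted from the fluctuation estimates used to prove Theorem~\ref{thm:pl}.
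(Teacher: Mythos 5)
Your proposal is correct and has the same skeleton as the paper's proof (one-dimensional strictly monotone estimating equation, $O_p(\sqrt n)$ bound on the pseudo-score at the truth, a $\Theta_p(n)$ lower bound on the negative derivative, and a mean-value sandwich), but the route to the crucial lower bound $\sum_i m_i(\mathbf{X})^2 = \Theta_p(n)$ in part (a) is genuinely different. The paper goes through Lemma \ref{lem:rootn}: a Chernoff-type argument comparing $Z_n(\beta_0-2\lambda,B_0)$ with $Z_n(\beta_0,B_0)$, monotonicity of $\beta\mapsto Z_n'(\beta,B)$, and FKG to lower-bound $Z_n'(\beta_0/2,B_0)$; this yields exponential decay of $\P(|\sum_i X_i m_i(\mathbf{X})|<n\delta)$, after which Cauchy-Schwarz ($\sum_i m_i^2\ge \frac1n(\sum_i X_i m_i)^2$) finishes. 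You instead lower-bound $\sum_i m_i^2$ by $n\bar m(\mathbf{X})^2$ and show $\bar m(\mathbf{X})^2=\Omega_p(1)$ by a mean-variance/Chebyshev argument: Griffiths for $|\E[\bar m]|\ge\tanh(|B_0|)\cdot\frac1n\sum_{ij}A_n(i,j)$ (which is where \eqref{eq:non_trivial} enters), and GHS plus FKG for $\mathrm{Var}(\bar m)=O(1/n)$. Both use ferromagnetic correlation inequalities; yours invokes the stronger GHS inequality and yields only a polynomial tail, while the paper's yields an exponential tail with only FKG — but polynomial suffices here. A second, more minor difference: you re-derive the $O(n)$ variance bounds on the pseudo-scores $Q_n(\beta_0,B_0|\mathbf{X})$, $R_n(\beta_0,B_0|\mathbf{X})$ directly via a two-spin decoupling, whereas the paper quotes \eqref{eq:bas1}–\eqref{eq:bas2} of Lemma \ref{lem:basak2}; your derivation has the side benefit of making it explicit that these score bounds need only \eqref{eq:bd}, not \eqref{eq:mean_field}, which is the hypothesis under which Lemma \ref{lem:basak2} is formally stated. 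You also treat existence and uniqueness of the one-dimensional roots (via the boundary limits of $Q_n$, $R_n$ and $\P(\mathbf{X}\in\{\pm\mathbf{1}\})\to 0$) somewhat more carefully than the paper, which handles this implicitly via the Taylor argument and Theorem \ref{thm:pl}(b).
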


\subsection{Interpretation of results for graphs}

Even though all our results apply for general matrices with non-negative entries, the most interesting examples for our theorems are the cases when $A_n$ is the scaled adjacency matrix of a simple graph $G_n$ as in Definition \ref{def:graph}. Also the conditions take a simpler form. This subsection describes all our results in this special case. Recall that $(d_1(G_n),\cdots,d_n(G_n))$ are the degrees of $G_n$, and let $\bar{d}(G_n)$ denote the average degree. Also assume that $(\beta,B)\in \Theta$, as has been the case throughout the paper. Finally note that \eqref{eq:bd} is equivalent to $\max_{i\in [n]}d_i(G_n)=O(\bar{d}(G_n))$, which we will assume throughout this subsection.

\begin{itemize}
\item
For any graph $G_n$, the pseudo-likelihood estimate of $\beta$ is $\sqrt{n}$ consistent if $B$ is known, and vice versa (Proposition \ref{ppn:rootn}).
\\

\item
If $E(G_n)\gg n$, and the graph is somewhat irregular as captured by the condition $$\liminf_{n\rightarrow\infty}\frac{1}{n}\sum_{i=1}^n\Big[\frac{d_i(G_n)}{\bar{d}(G_n)}-1\Big]^2>0,$$
then the pseudo-likelihood estimator for $(\beta,B)$ is jointly $\sqrt{n}$ consistent (Theorem \ref{thm:basak2}).
\\

\item
If $E(G_n)\gg n$ and the graph is somewhat  regular as captured by the condition $$\limsup_{n\rightarrow\infty}\frac{1}{n}\sum_{i=1}^n\Big[\frac{d_i(G_n)}{\bar{d}(G_n)}-1\Big]^2=0,$$
then we believe that the pseudo-likelihood estimator for $(\beta,B)$ is not jointly $\sqrt{n}$ consistent (Theorem \ref{basak}). The only reason this statement is suggestive and not rigorous is that Theorem \ref{thm:pl} only provides an upper bound and not a matching lower bound.
\\

\item
For the particular case when $G_n$ is an Erd\"os-Renyi random graph with parameter $p>0$ free of $n$, there are no estimators which are jointly consistent for $(\beta,B)$ (Theorem \ref{thm:cw}). Thus indeed the estimation problem is harder on asymptotically regular graphs with large degree.
\\

\item
If $G_n$ is a graph with $\max_{i\in [n]}d_i(G_n)$ bounded, then the pseudo-likelihood estimator for $(\beta,B)$ is jointly $\sqrt{n}$ consistent (Theorem \ref{ppn:z1}), irrespective of whether $G_n$ is regular or not.

\end{itemize}
 Figure~\ref{fig:SummaryTree} gives a gist of the above discussion on a summary tree. 
\begin{figure}
\begin{tikzpicture}[sibling distance=20em,
  every node/.style = {shape=rectangle, rounded corners,
    draw, align=center,
    top color=white, bottom color=green!20}]]
  \node {Graph $G_n$,\\Avg. Deg. $\bar{d}(G_n)$}
    child { node {$\bar{d}(G_n)$ bounded} 
      child{ node{Estimation at \\$\sqrt{n}$-rate \textbf{possible}.\\See Theorem~\ref{ppn:z1}.}}}
    child { node {$\bar{d}(G_n)$ unbounded}
      child { node {Regular}
        child { node {Estimation at \\$\sqrt{n}$-rate may be\\\textbf{impossible}. See Theorem~\ref{basak}\\ and Theorem \ref{thm:cw}.} }}
      child { node {Irregular} 
        child { node {Estimation at\\$\sqrt{n}$-rate \textbf{possible}.\\See Theorem~\ref{thm:basak2}.} } }};
\end{tikzpicture}
\caption{Summary tree of our results.}
\label{fig:SummaryTree}
\end{figure}
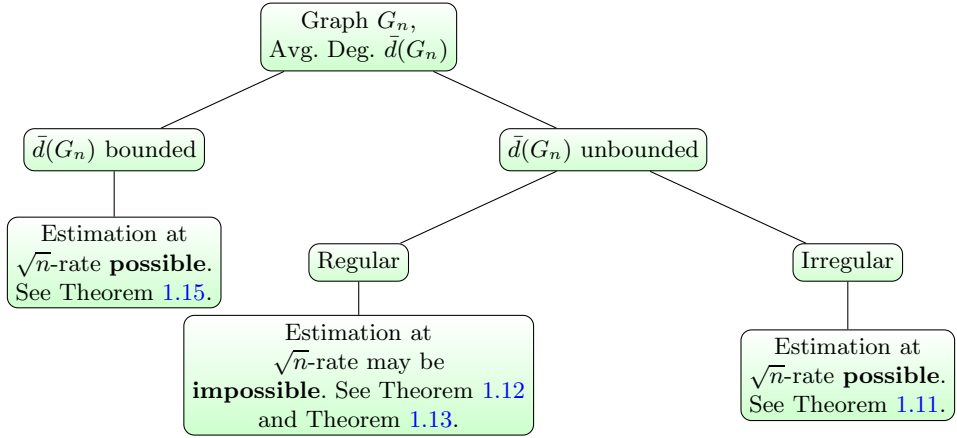 
\\

\subsection{Simulation}
Our results demonstrate a dichotomy in the joint $\sqrt{n}$-consistency of $(\beta,B)$ based on whether the coupling matrix is approximately regular or not.
In what follows, we address this dichotomy using simulation. At first, we fix $30$ different values of the pair $(\beta, B)$ on the line $m= \tanh(m\beta +B)$ for $m=0.3$. Next, we draw two random $d$-regular graphs $G_1$ and $G_2$ with $d=4$ and $d=50$, with $n=100$ nodes. For each value of $(\beta, B)$, we generate a sample from the Ising model with scaled adjacency matrices for the graphs $G_1$ and $G_2$. 
On each of those $30$ different samples, we estimate $(\beta, B)$ by solving the bivariate pseudolikelihood equation. We repeat the same experiment with number of nodes $n=200$, and random $d$-regular graphs ($d=4,50$). In Figure~\ref{fig:TOF}, we plot the corresponding pseudolikelihood estimates of $(\beta, B)$ for $n=100$ and $n=200$ respectively. In both the figures, plots of the estimates for the case $d=4$ (resp. $d=50$) are colored in green (resp. red). Notice that the fit in the case when $d=4$ is more prominent in comparison to the case when $d=50$.

\begin{figure}
\centering
\begin{subfigure}{7cm}       
\includegraphics[width=8.5cm,height=7cm]{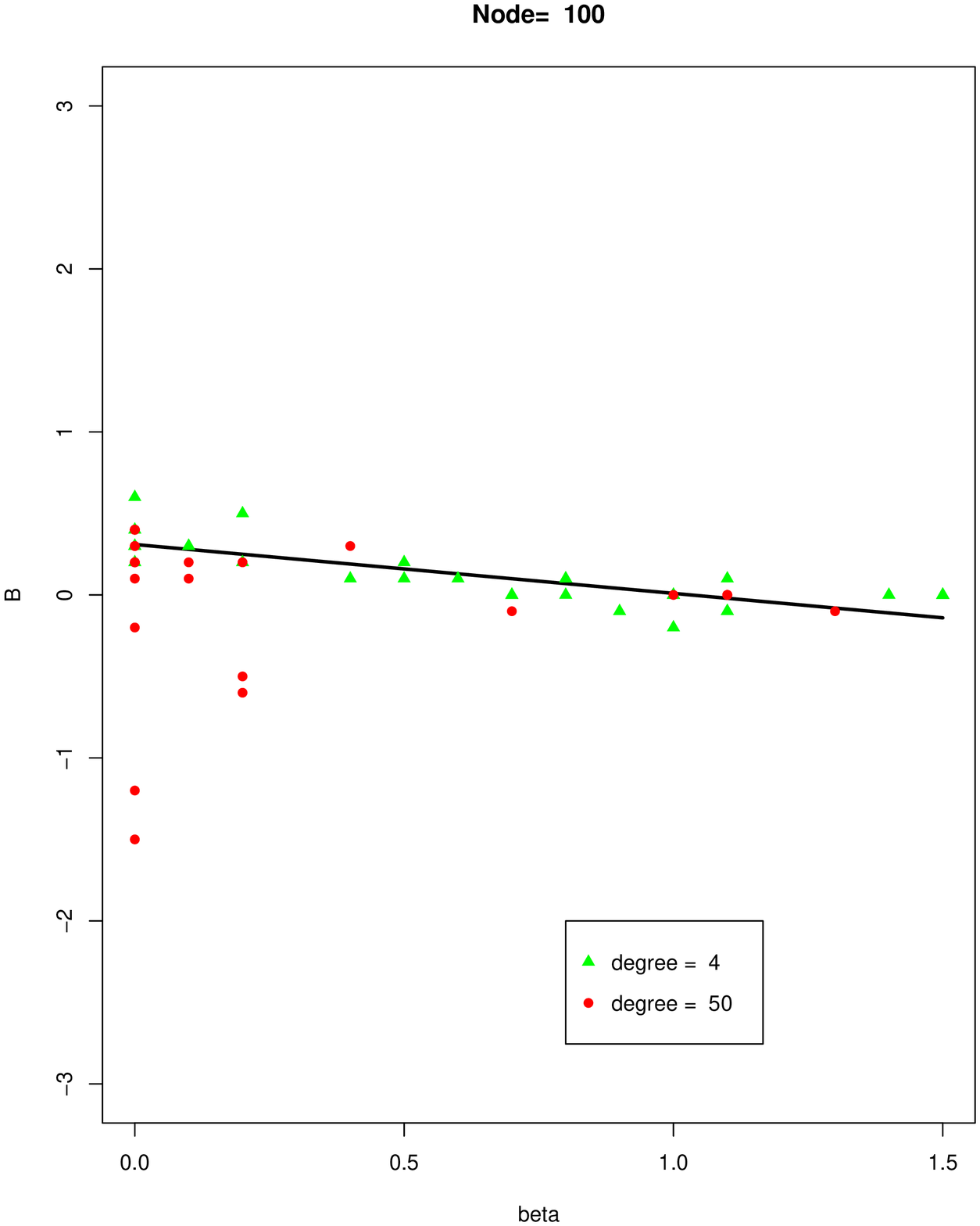}
\label{fig:100Exp}
\end{subfigure}
\hspace{0.5cm}
\begin{subfigure}{7cm}
\centering
\includegraphics[width=8.5cm,height=7cm]{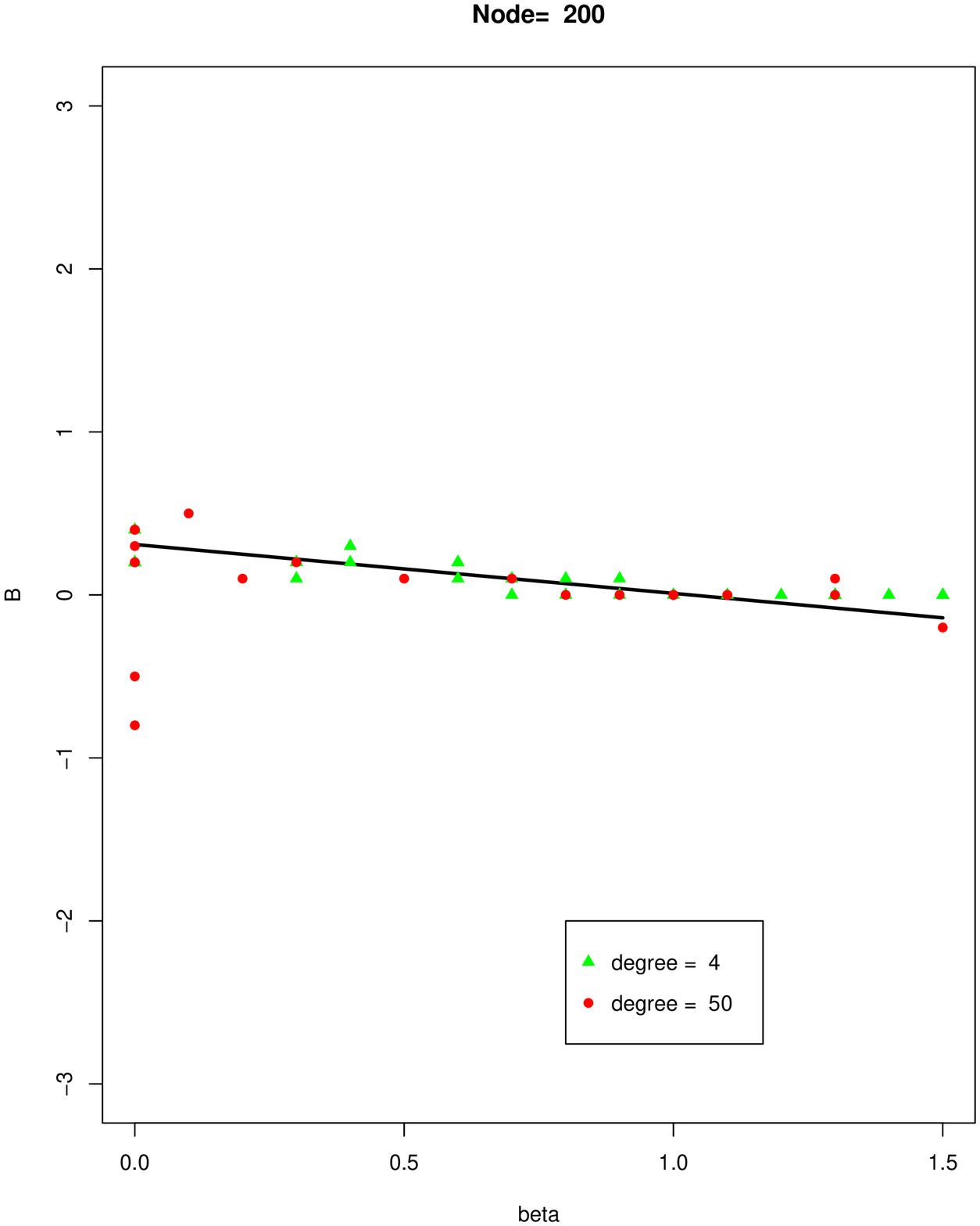}
\label{fig:200Exp}
\end{subfigure}
\caption{Plot of the pseudo-likelihood estimate $(\hat{\beta}_n,\hat{B}_n)$ for $n=100$ (on left) and $n=200$ (on right) respectively where $(\beta,B)$ lie on the line $m=\tanh(m\beta+B)$ (black line in the plot) for $m=0.3$.}\label{fig:TOF}
\end{figure}

The rest of the paper is outlined as follows: Section \ref{sec:two} details the proof of Theorem \ref{thm:pl}. Section \ref{sec:three} proves Theorem \ref{thm:basak2} and Theorem \ref{basak} with the help of Theorem \ref{lem:basak}, the proof of which is deferred to the appendix. Finally, section \ref{sec:four} gives the proof of Theorem \ref{thm:cw} and Theorem~\ref{ppn:z1}. The proof of Proposition \ref{ppn:rootn} is also deferred to the appendix.

\section*{Acknowledgement}
We thankfully acknowledge helpful discussions with Bhaswar B. Bhattyacharya and Sourav Chatterjee  at various stages of this work.

\section{Proof of Theorem \ref{thm:pl} }\label{sec:two}

The following Lemma is a collection of estimates to be used throughout the rest of this paper.

\begin{lem}\label{lem:basak2}
Suppose ${\bf X}=(X_1,\cdots,X_n)$ is an observation from the Ising model \eqref{eq:ising}, where the coupling matrix $A_n$ satisfies \eqref{eq:bd} and \eqref{eq:mean_field}.

 Then setting $$f_n({\bf x}):=\frac{\beta}{2}{\bf x}'A_n{\bf x}+B\sum_{i=1}^nx_i $$ and $$b_i({\bf x}):=\E(X_i|X_j=x_j,j\ne i)=\tanh(\beta m_i({\bf x})+B),$$ the following hold:
\begin{align}
\label{eq:bas3}&\limsup_{n\rightarrow\infty}\frac{1}{n^2}\E \Big[f_n({\bf X})-f_n(b({\bf X}))]^2=0,\\
\label{eq:bas2}&\limsup_{n\rightarrow\infty}\frac{1}{n}\E \Big[\sum_{i=1}^n(X_i-b_i({\bf X}))m_i({\bf X})\Big]^2<\infty,\\
\label{eq:bas1}&\limsup_{n\rightarrow\infty}\frac{1}{n}\E \Big[\sum_{i=1}^n(X_i-b_i({\bf X}))\Big]^2<\infty.
\end{align}

\end{lem}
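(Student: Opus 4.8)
The plan is to treat the three estimates \eqref{eq:bas3}, \eqref{eq:bas2}, \eqref{eq:bas1} using a common tool: the approximate factorization / concentration machinery for Ising models under the conditions \eqref{eq:bd} and \eqref{eq:mean_field}, which is precisely the regime studied in \cite{BasakM}. The key mechanism is that under \eqref{eq:bd} the conditional means $b_i({\bf x}) = \tanh(\beta m_i({\bf x}) + B)$ are Lipschitz in each coordinate with summable Lipschitz constants (since $\sum_j A_n(i,j)\le\gamma$ and $\tanh$ is $1$-Lipschitz), so the vector-valued map ${\bf x}\mapsto b({\bf x})$ is a contraction-type perturbation; combined with \eqref{eq:mean_field}, which forces $\frac1n\sum_i m_i({\bf x})^2$-type quantities to self-average, one gets $L^2$ control.

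First, for \eqref{eq:bas1}, I would expand
$\E\big[\sum_i (X_i - b_i({\bf X}))\big]^2 = \sum_{i,j}\E\big[(X_i-b_i({\bf X}))(X_j-b_j({\bf X}))\big]$.
The diagonal terms contribute at most $n$ since $|X_i-b_i|\le 2$. For the off-diagonal terms, the standard trick is that $X_i - b_i({\bf X})$ is a martingale-type increment: conditioning on $X_k, k\ne i$ kills it in expectation, so $\E[(X_i-b_i({\bf X}))(X_j-b_j({\bf X}))]$ would vanish if $b_j({\bf X})$ did not depend on $X_i$. Writing $b_j^{(i)}$ for the same quantity with $X_i$ replaced by an independent resampled coordinate, one has $\E[(X_i-b_i({\bf X}))(X_j - b_j^{(i)})] = 0$, and $|b_j({\bf X}) - b_j^{(i)}| \le 2\beta A_n(i,j)$. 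Hence each off-diagonal term is $O(A_n(i,j))$, and $\sum_{i\ne j} A_n(i,j) \le \gamma n$ by \eqref{eq:bd}, giving the $O(n)$ bound. The same argument handles \eqref{eq:bas2}, now with an extra factor $m_i({\bf X})m_j({\bf X})$, which is bounded by $\gamma^2$ in absolute value since $|m_i({\bf x})|\le \sum_j A_n(i,j)\le\gamma$; so the bound again reduces to $\sum_{i,j}|m_i||m_j|\cdot(\text{diagonal} + A_n(i,j)) = O(n)$.

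For \eqref{eq:bas3}, the natural route is to note $f_n({\bf X}) - f_n(b({\bf X}))$ is a difference of a quadratic-plus-linear form evaluated at ${\bf X}$ versus at $b({\bf X})$; writing ${\bf X} = b({\bf X}) + ({\bf X} - b({\bf X}))$ and expanding, the cross terms are of the form $\beta\, b({\bf X})' A_n ({\bf X}-b({\bf X})) + B\sum_i(X_i - b_i({\bf X}))$ plus the quadratic remainder $\frac\beta2({\bf X}-b({\bf X}))'A_n({\bf X}-b({\bf X}))$. The quadratic remainder is bounded in absolute value by $\frac{\beta\gamma}{2}\sum_i(X_i-b_i({\bf X}))^2 \le 2\beta\gamma n$ deterministically, hence $o(n^2)$ after dividing by $n^2$ — actually $O(n)/n^2 = o(1)$, good. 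For the linear-in-$({\bf X}-b({\bf X}))$ terms one squares and takes expectation: $\E[\sum_i (X_i-b_i({\bf X}))]^2 = O(n)$ by \eqref{eq:bas1}, and $\E[b({\bf X})'A_n({\bf X}-b({\bf X}))]^2 = \E[\sum_i m_i(b({\bf X}))(X_i - b_i({\bf X}))]^2$ should be handled exactly like \eqref{eq:bas2}, using $|m_i(b({\bf x}))|\le\gamma$ — this is again $O(n)$. Dividing by $n^2$ sends everything to $0$. I expect the main obstacle to be the bookkeeping in \eqref{eq:bas3}: one must be careful that $m_i(b({\bf X}))$ rather than $m_i({\bf X})$ appears, and verify that the resampling/martingale argument for the quadratic-form expectation still goes through when the "weights" $m_i(b({\bf X}))$ themselves depend on all coordinates — but since they too are Lipschitz with constant $\le\beta A_n(i,\cdot)$ summing to $\le\beta\gamma$, the same perturbation estimate $|m_i(b({\bf X})) - m_i(b^{(k)}({\bf X}))| \le \beta\gamma A_n(i,k)\cdot O(1)$ closes the loop, and \eqref{eq:mean_field} is not even strictly needed for these crude $L^2$ bounds (it is \eqref{eq:bd} that does the work), though it may be invoked to streamline the variance estimates.
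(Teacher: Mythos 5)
Your argument for \eqref{eq:bas1} and \eqref{eq:bas2} is sound and is essentially a self-contained re-derivation of the estimates that the paper simply outsources to \cite[Lemma 1]{MMY} and \cite[Lemma 3.2]{CD}: the resampling/conditioning trick (condition on $X_k,\ k\ne i$, replace $m_j({\bf X})$ by $m_j^{(i)}({\bf X})=\sum_{k\ne i}A_n(j,k)X_k$, and use that $\tanh$ is $1$-Lipschitz so the replacement costs at most $\beta A_n(i,j)$) is exactly the mechanism the paper itself uses explicitly in the proof of Lemma \ref{lem:sparse_new}. For \eqref{eq:bas2} you should be slightly more careful than you were: the weight $m_j({\bf X})$ multiplying $(X_j-b_j)$ also depends on $X_i$ through the term $A_n(j,i)X_i$, so one must replace both $b_j$ and the weight $m_j$ by their $X_i$-free versions before conditioning; but each replacement contributes $O(A_n(i,j))$, so your claimed $O(n)$ bound survives.

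The treatment of \eqref{eq:bas3} has a genuine gap, and it is exactly the step where \eqref{eq:mean_field} is indispensable. Writing $e:={\bf X}-b({\bf X})$, your decomposition $f_n({\bf X})-f_n(b({\bf X}))=\beta\, b'A_n e+B\sum_i e_i+\tfrac{\beta}{2}e'A_n e$ is correct, and the two linear-in-$e$ pieces do contribute $O(n)/n^2=o(1)$ after squaring and using \eqref{eq:bas1}--\eqref{eq:bas2}-type bounds. But the quadratic remainder is only bounded deterministically by $|e'A_n e|\le\|A_n\|_2\|e\|_2^2\le 4\gamma n$, so its square is $O(n^2)$ and its contribution to $\frac{1}{n^2}\E[\cdot]^2$ is $O(1)$, \emph{not} $o(1)$ — your line ``$O(n)/n^2=o(1)$'' has dropped the squaring that \eqref{eq:bas3} requires. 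To make the remainder genuinely $o(n)$ one must use that the $e_i$ are conditionally centered together with the Frobenius-norm smallness $\sum_{i,j}A_n(i,j)^2=o(n)$ coming from \eqref{eq:mean_field}; heuristically, if the $e_i$ were independent one would get $\E(e'A_n e)^2\asymp\sum_{i,j}A_n(i,j)^2=o(n)$, and the resampling argument is what replaces independence. This is precisely the content of \cite[Lemma 3.1]{CD} / \cite[Lemma 3.2]{BasakM}, which the paper cites. Consequently your closing remark that ``\eqref{eq:mean_field} is not even strictly needed'' is false: \eqref{eq:bas3} fails without it (e.g.\ for bounded-degree graphs, where $\sum_{i,j}A_n(i,j)^2=\Theta(n)$ and $e'A_n e$ is genuinely of order $n$). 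Condition \eqref{eq:bd} alone suffices for \eqref{eq:bas1}--\eqref{eq:bas2}, but not for \eqref{eq:bas3}.
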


\begin{proof}[Proof of Lemma \ref{lem:basak2}]

 Various versions of these estimates exist already in the literature. 
 In particular, \eqref{eq:bas3} follows on invoking \cite[Lemma 3.1]{CD}  or \cite[Lemma 3.2]{BasakM} along with the assumption that $A_n$ satisfies \eqref{eq:mean_field}, and  \eqref{eq:bas2} follows on invoking \cite[Lemma 3.2]{CD} along with the assumption that $A_n$  satisfies \eqref{eq:bd}. Finally, \eqref{eq:bas1} follows as an easy consequence of \cite[Lemma 1]{MMY}.
 
\end{proof}

We also need the following lemma for proving Theorem \ref{thm:pl} and Propositon \ref{ppn:rootn}. The proof of the lemma is deferred to the appendix.
\begin{lem}\label{lem:rootn}

Suppose ${\bf X}=(X_1,\cdots,X_n)$ is an observation from Ising model as in \eqref{eq:ising} such that \eqref{eq:bd} and \eqref{eq:non_trivial} holds. If the true parameter is $(\beta_0,B_0)\in \Theta$, then there exists $\delta>0$ such that
$$\limsup_{n\rightarrow\infty}\frac{1}{n}\log \P_{n,\beta_0,B_0}(|\sum_{i=1}^nX_im_i({\bf X})|<n\delta)<0.$$

\end{lem}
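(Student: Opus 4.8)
The plan is to show that the random variable $\frac1n\sum_{i=1}^n X_i m_i(\mathbf X) = \frac1n \mathbf X' A_n \mathbf X$ is bounded away from $0$ with overwhelming probability under $\P_{n,\beta_0,B_0}$. The natural route is a large-deviation/tilting argument exploiting the product structure of the conditional distributions. First I would write, for any $\mathbf x\in\{-1,1\}^n$, the identity $\sum_i x_i m_i(\mathbf x) = \mathbf x' A_n \mathbf x$, and note that on the event $E_n:=\{|\mathbf X' A_n \mathbf X|<n\delta\}$ the exponent $f_n(\mathbf X)=\frac{\beta_0}{2}\mathbf X'A_n\mathbf X + B_0\sum_i X_i$ is controlled: its quadratic part is at most $\frac{\beta_0}{2}n\delta$ in absolute value. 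The idea is then to compare $\P_{n,\beta_0,B_0}(E_n)$ with the probability of $E_n$ under a reference product measure and to absorb the normalizing constant $Z_n(\beta_0,B_0)$, which is known (by standard mean-field estimates, e.g. \cite{BasakM} under \eqref{eq:bd}) to satisfy $\frac1n\log Z_n(\beta_0,B_0)\to \varphi(\beta_0,B_0)$ for an explicit limiting free energy $\varphi$ that is strictly larger than the "$\beta=0$" value $\log(2\cosh B_0)$ when $\beta_0>0$ and $B_0\neq 0$ and \eqref{eq:non_trivial} holds.

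More concretely, I would proceed as follows. Step 1: Reduce to controlling $\frac1n\log\P_{n,\beta_0,B_0}(E_n)$; by a union bound over finitely many sign patterns it suffices to handle $E_n\cap\{\sum_i X_i\approx n s\}$ for a fixed value $s\in[-1,1]$ of the magnetization, and then take a supremum over $s$ at the end. Step 2: On such an event, $f_n(\mathbf X)\le \frac{\beta_0}{2}n\delta + B_0 n s$, so
\[
\P_{n,\beta_0,B_0}(E_n\cap\{\textstyle\sum_i X_i\approx ns\}) \le \frac{1}{Z_n(\beta_0,B_0)} e^{\frac{\beta_0}{2}n\delta + B_0 n s}\,\#\{\mathbf x: \textstyle\sum x_i\approx ns\} \le \frac{1}{Z_n(\beta_0,B_0)} e^{\frac{\beta_0}{2}n\delta + n(B_0 s + H(s))+o(n)},
\]
where $H(s)$ is the binary entropy rate. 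Step 3: Take $\frac1n\log$ of both sides and let $n\to\infty$: the right side is bounded by $\frac{\beta_0}{2}\delta + \sup_s\{B_0 s + H(s)\} - \varphi(\beta_0,B_0) = \frac{\beta_0}{2}\delta + \log(2\cosh B_0) - \varphi(\beta_0,B_0)$. Step 4: Since $\varphi(\beta_0,B_0) > \log(2\cosh B_0)$ strictly (this uses $\beta_0>0$ together with \eqref{eq:non_trivial}, which guarantees $\frac1n\mathbf 1'A_n\mathbf 1$ stays bounded below so that turning on $\beta_0$ strictly increases the free energy), choosing $\delta>0$ small enough that $\frac{\beta_0}{2}\delta < \varphi(\beta_0,B_0) - \log(2\cosh B_0)$ makes the limsup strictly negative, which is exactly the claim.

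The main obstacle I anticipate is Step 4: establishing, with the right uniformity in $n$, the strict inequality $\liminf_n \frac1n\log Z_n(\beta_0,B_0) > \log(2\cosh B_0)$ for general $A_n$ satisfying only \eqref{eq:bd} and \eqref{eq:non_trivial}, rather than a mean-field $A_n$. One clean way around this is to avoid invoking a sharp free-energy formula and instead lower-bound $Z_n$ directly by restricting the sum to a suitable set of configurations: e.g. $Z_n(\beta_0,B_0)\ge \sum_{\mathbf x}e^{\frac{\beta_0}{2}\mathbf x'A_n\mathbf x + B_0\sum x_i}\,\mathbf 1\{\mathbf x' A_n\mathbf x \ge c n\}$ for an appropriate configuration-rich event (for instance, configurations with a slight majority aligned with $\mathrm{sign}(B_0)$), which by a second-moment or entropy count contributes $e^{n(\text{something} > \log(2\cosh B_0)+\frac{\beta_0}{2}c - \varepsilon)}$. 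Comparing this lower bound with the upper bound from Step 2 then yields the gap without needing the exact limiting free energy. A minor technical point to handle carefully is the $o(n)$ error in the entropy count and the replacement of the continuous $\sum_i X_i$ by a fine finite grid, but these are routine. I would also remark that the condition $\liminf \frac1n\sum_{i,j}A_n(i,j)>0$ is genuinely used here — without it $A_n$ could be negligible and the conclusion would fail.
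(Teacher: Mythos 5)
Your proposal is correct, but it takes a genuinely different route from the paper's. The paper's argument is an exponential tilting (Chernoff) bound: write $\P_{n,\beta_0,B_0}(\sum X_i m_i < n\delta) \le e^{\lambda n\delta}\, Z_n(\beta_0-2\lambda,B_0)/Z_n(\beta_0,B_0)$ with $\lambda=\beta_0/4$, use convexity of $\beta\mapsto\log Z_n(\beta,B_0)$ to reduce everything to a lower bound on $\partial_\beta\log Z_n$ at $\beta_0/2$, and then lower-bound that derivative by $\Theta(n)$ via the conditional-expectation identity $\E\sum_i X_im_i=\E\sum_i m_i\tanh(\beta m_i+B_0)$ together with FKG/Griffiths monotonicity and \eqref{eq:non_trivial}. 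Your argument instead slices the event by magnetization sector, counts configurations by entropy, and pits $\log(2\cosh B_0)+\frac{\beta_0}{2}\delta$ against a lower bound on $\frac1n\log Z_n$. You correctly flag that the sharp free-energy limit is only known under \eqref{eq:mean_field}, not under \eqref{eq:bd}–\eqref{eq:non_trivial} alone, so the initial reference to $\varphi(\beta_0,B_0)$ must be replaced; your suggested workaround is sound and can be made crisp with a single line of Jensen: writing $Z_n(\beta_0,B_0)=(2\cosh B_0)^n\,\E_\mu\exp(\frac{\beta_0}{2}{\bf X}'A_n{\bf X})$ with $\mu$ the product measure tilted by $B_0$, one gets $\frac1n\log Z_n\ge\log(2\cosh B_0)+\frac{\beta_0}{2}(\tanh B_0)^2\cdot\frac1n\sum_{i,j}A_n(i,j)+o(1)$, and \eqref{eq:non_trivial} gives the strictly positive gap for $\delta$ small. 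The trade-off: your route avoids correlation inequalities entirely (no FKG/Griffiths) at the cost of the sector decomposition and a separate $Z_n$ lower bound, while the paper's route is a one-line Markov bound once the free-energy derivative estimate is in hand. Both arguments use $\beta_0>0$, $B_0\ne 0$, and \eqref{eq:non_trivial} at exactly the expected pinch points.
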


\subsection{Proof of Theorem \ref{thm:pl}}

\begin{enumerate}
\item[(a)]
Setting 
\begin{equation}\label{eq:GradLogL}
\widetilde{PL}_n(\beta,B|{\bf x}):=\sum_{i=1}^n\Big(\beta x_im_i({\bf x})+Bx_i-\log\cosh(\beta m_i({\bf x})+B)\Big)
\end{equation}
note that $PL_n(\beta,B|{\bf x})=\nabla \widetilde{PL}_n(\beta,B|{\bf x}).$
Differentiating the function $(\beta,B)\mapsto \widetilde{PL}_n(\beta,B|{\bf x})$ twice we get the  negative Hessian matrix given by
\begin{align}\label{eq:Hessian}
H_n(\beta,B|{\bf x})=\left[ {\begin{array}{cc}
   \sum_{i=1}^nm_i({\bf x})^2\theta_i(\beta, B|{\bf x}) &   \sum_{i=1}^nm_i({\bf x})\theta_i(\beta, B|{\bf x}) \\        \sum_{i=1}^nm_i({\bf x})\theta_i(\beta, B|{\bf x})  &   \sum_{i=1}^n\theta_i(\beta, B|{\bf x})    \end{array} } \right].
   \end{align}
where $\theta_i(\beta, B|{\bf x}):=\textrm{sech}^2(\beta m_i({\bf x})+B) $.
The determinant of the Hessian is given by
\begin{align}
\notag&\Big[\sum_{i=1}^nm_i({\bf x})^2 \theta_i(\beta, B|{\bf x})\Big]\times \Big[\sum_{i=1}^n \theta_i(\beta, B|{\bf x})\Big]-\Big[\sum_{i=1}^nm_i({\bf x})\theta_i(\beta, B|{\bf x})\Big]^2\\
\notag=&\frac{1}{2}\sum_{i,j=1}^n\theta_i(\beta, B|{\bf x})\theta_i(\beta, B|{\bf x})(m_i({\bf x})-m_j({\bf x}))^2\\
\notag\ge &\frac{1}{2}\text{sech}^4(\beta\gamma+|B|)\sum_{i,j=1}^n(m_i({\bf x})-m_j({\bf x}))^2 =\text{sech}^4(\beta\gamma+|B|)n^2T_n({\bf x}),
\end{align}
which gives
\begin{align}\label{eq:sum2}
|H_n(\beta,B|{\bf x})|=\lambda_n(\beta,B|{\bf X})\mu_n({\bf x})\ge  \text{sech}^4(\beta\gamma+|B|)n^2T_n({\bf x}).
\end{align}
Since on $A_{1,n}^c$ we have $T_n({\bf x})> 0$ it follows that the Hessian is negative definite, and so the function $\widetilde{PL}_n(\beta,B|{\bf x})$ is strictly concave. To show that there exists a global maximizer $(\hat{\beta}_n,\hat{B}_n)$, it thus suffices to show that
$$\lim_{\beta\rightarrow+\pm \infty} \widetilde{PL}_n(\beta,B|{\bf x})=-\infty,\quad \lim_{B\rightarrow\pm\infty}\widetilde{PL}_n(\beta,B|{\bf x})=-\infty.$$
To see this, note that ${\bf x}\in A_{2,n}^c$ implies there exists $i\in [n]$ such that $x_im_i({\bf x})=-|m_i({\bf x})|$, and $m_i({\bf x})\ne 0$. Since we have
$$ \widetilde{PL}_n(\beta,B|{\bf x})\le \beta x_im_i({\bf x})-\log (e^{\beta m_i({\bf x})x_i+B}+e^{-\beta m_i({\bf x})+B}),$$
on letting $\beta\rightarrow\infty$ gives $\lim_{\beta\rightarrow+\infty} \widetilde{PL}_n(\beta,B|{\bf x})=-\infty.$ A similar argument shows that if ${\bf x}\in A_{3,n}^c$, then 
$\lim_{\beta\rightarrow-\infty} \widetilde{PL}_n(\beta,B|{\bf x})=-\infty.$
Finally, it is immediate that
$\lim_{B\rightarrow\pm \infty} \widetilde{PL}_n(\beta,B|{\bf x})=-\infty,$
for any ${\bf x}\notin A_{4,n}$. 
Thus there exists a unique global maximum $(\hat{\beta}_n,\hat{B}_n)$ for the function $(\beta,B)\mapsto \widetilde{PL}_n(\beta,B|{\bf x})\in \R^2$, and so $(\hat{\beta}_n,\hat{B}_n)$ is the unique root of $PL_n(\beta,B|{\bf x})$.
\\

We will now show that if ${\bf x}\in A_{j,n}$ for some $j=1,2,3,4$, then the pseudo-likelihood estimator is not defined. 

\begin{itemize}
\item{${\bf x}\in A_{1,n}$}

On this set we have $T_n({\bf x})=0$ which implies $m_i({\bf x})=\bar{m}({\bf x})$ for all $i\in [n]$. This implies that $Q_n(\beta,B|{\bf x})=\bar{m}({\bf x})R_n(\beta,B|{\bf x})$, and so the equation $PL_n(\beta,B|{\bf x})=(0,0)$ is equivalent to $$R_n(\beta,B|{\bf x})=0\Leftrightarrow \bar{{\bf x}}=\tanh(\beta \bar{m}({\bf x})+B).$$Since the function $(\beta,B)\mapsto\widetilde{PL}_n(\beta,B|{\bf x})$ is convex, it follows that any $(\beta,B)$ satisfying this equation is a global maximizer, and hence in this case the set of maximizers is a line in the two dimensional plane and hence not unique. Thus the pseudo-likelihood estimator is not defined.

\item{${\bf x}\in A_{2,n}$}

On this set we have 
\begin{align*}
Q_n(\beta,B|{\bf x})
=\sum_{i=1}^n|m_i({\bf x})|-\sum_{i=1}^nm_i({\bf x})\tanh(\beta m_i({\bf x})+B)>0,
\end{align*}
and so the equation $Q_n(\beta,B|{\bf x})=0$ has no roots in $\R^2$, and so the pseudo-likelihood estimator is not defined.

\item{${\bf x}\in A_{3,n}$}

Similarly, on this set $Q_n(\beta,B|{\bf x})<0$ for all $(\beta,B)\in \R^2$, and so the pseudo-likelihood estimator is not defined.

\item{${\bf x}\in A_{4,n}$}

If ${\bf x}={\bf 1}$, then we have
\begin{align*}
R_n(\beta,B|{\bf x})
=\sum_{i=1}^n(1-\tanh(\beta m_i({\bf x})+B))>0,
\end{align*}
and so the equation $R_n(\beta,B|{\bf x})=0$ has no roots in $\R^2$, and so the pseudo-likelihood estimator is not defined.

Similarly if ${\bf x}=-{\bf 1}$, then $R_n(\beta,B|{\bf x})<0$ for all $(\beta,B)\in \R^2$.
\\

\end{itemize}

\item[(b)]

Note that if ${\bf x}\in A_{2,n}$ we have
$$Q_n(\beta_0,B_0|{\bf x})\ge (1-\tanh(\beta_0\gamma+|B_0|)\sum_{i=1}^n |m_i({\bf x})|,$$
which gives
$$\Big|\sum_{i=1}^nx_im_i({\bf x})\Big|\le \sum_{i=1}^n |m_i({\bf x})|\le \frac{1}{1-\tanh(\beta_0\gamma+|B_0|)} Q_n(\beta_0,B_0|{\bf x}).$$
Since $Q_n(\beta_0,B_0|{\bf X})=O_p(\sqrt{n})$ by \eqref{eq:bas2} and $\sum_{i=1}^n X_im_i({\bf X})$ is not $o_p(n)$ by Lemma \ref{lem:rootn},  $\P_{n,\beta_0,B_0}(A_{2,n})$ 
 converges to $0$.
A similar proof takes care of $A_{3,n}$. It thus remains to show that $\P_{n,\beta_0,B_0}(A_{4,n})$ converges to $0$ as well. To this effect note that if ${\bf x}=\pm {\bf 1}$ then we have
$|R_n(\beta_0,B_0|{\bf x})|\ge n(1-\tanh(\beta_0\gamma+|B_0|),$ the probability of which converges to $0$ as  $R_{n}(\beta_0,B_0|{\bf X})=O_p(\sqrt{n})$ by \eqref{eq:bas1}.
\\

\item[(c)]
By part (b) we have ${\bf x}\in A_{2,n}^c\cap A_{3,n}^c\cap A_{4,n}^c$ with probability tending to $1$. Also by assumption we have $T_n({\bf X})>0$ with probability tending to $1$, and so the pseudo-likelihood estimator $(\hat{\beta}_n,\hat{B}_n)$ is well defined with probability tending to $1$.
  Recall the $2\times 2$ matrix $H_n(\beta,B|{\bf x})$ as defined in \eqref{eq:Hessian}, and denote $\lambda_n(\beta,B|{\bf x})\ge \mu_n(\beta,B|{\bf x})$ to be its eigenvalues.
   We start by giving a lower bound to the minimum eigenvalue $\mu_n(\beta,B|{\bf x})$. To this effect, note that
   \begin{align*}
\lambda_n(\beta,B|{\bf x})+\mu_n(\beta,B|{\bf x})=\text{tr}(H_n(\beta,B|{\bf x})=\sum_{i=1}^n\theta_i(\beta,B|{\bf x})(m_i^2({\bf x})+1)\le n(1+\gamma^2), 
\end{align*}
which along with \eqref{eq:sum2} gives
\begin{align}\label{eq:eigen_lower}
\mu_n(\beta,B|{\bf x})\ge \frac{\lambda_n(\beta,B|{\bf x})\mu_n(\beta,B|{\bf x})}{\lambda_n(\beta,B|{\bf x})+\mu_n(\beta,B|{\bf x})}=\frac{|H_n(\beta,B|{\bf x})|}{\text{tr}(H_n(\beta,B|{\bf x})}\ge \frac{\text{sech}^4(\beta \gamma+|B|)}{1+\gamma^2}nT_n({\bf x}).
\end{align}
Armed with this estimate, we now complete the proof of the Theorem. To this effect, setting $(\beta_t,B_t)= (t\hat{\beta}_n+(1-t)\beta_0, t\hat{B}_n+(1-t)B_0)$, define a function $g_n:[0,1]\to \RR$ by
 \begin{align*}
g_n(t):= (\hat{\beta}_n-\beta_0)Q_n\left(\beta_t,B_t |{\bf x}\right)+(\hat{B}_n-B_0)R_n\left(\beta_t, B_t|{\bf x}\right),
 \end{align*}
  and note that
  \begin{equation}\label{eq:UpB}
 |g_n(1)-g_n(0)|=|(\hat{\beta}_n-\beta_0)Q_n(\beta_0,B_0|{\bf x})+(\hat{B}_n-B_0)R_n(\beta_0,B_0|{\bf x})|
  =O_p( \sqrt{n} Y_n),
 \end{equation}  
   where $Y_n:=||\hat{\beta}_n-\beta_0,\hat{B}_n-B_0||_2$, and we use Cauchy-Schwarz inequality along with \eqref{eq:bas2} and \eqref{eq:bas1} of Lemma \ref{lem:basak2}. Also we have
   \[g^{\prime}_n(t)= (\hat{\beta}_n-\beta_0,\hat{B}_n-B_0)H_n(\beta_t, B_t|{\bf x})(\hat{\beta}_n-\beta_0,\hat{B}_n-B_0)^{\top}\geq \mu_n(\beta_t,B_t|{\bf x})Y_n^2,\]
 In particular we have $g^{\prime}_n(t)\geq 0$ for all $t\in (0,1)$. Further, using \eqref{eq:eigen_lower} we get the existence of  $r,s>0$ such that
$$\inf_{(\beta,B)\in \Theta:||\beta-\beta_0,B-B_0||\le r}\mu_n(\beta,B|{\bf x})\ge  snT_n({\bf x}).$$
Noting that $||\beta_t-\beta_0,B_t-B_0||_2=tY_n$ gives
\begin{align}\label{eq:upgrade}
\int_0^1 g_n'(t)dt\ge  \int_0^{\min(1,\frac{r}{Y_n})}g_n'(t)dt\ge \min\Big(1,\frac{r}{Y_n}\Big)s n T_n({\bf x})W_n^2,
\end{align}
which along with \eqref{eq:UpB} gives
$\min(Y_n,r)=O_p(\frac{1}{\sqrt{n}T_n({\bf X}})$. Since $r>0$ is fixed, it follows that $Y_n=o_p(1)$, and so $(\hat{\beta}_n,\hat{B}_n)$ converges in probability to $(\beta_0,B_0)$. This shows that $Y_n<r$ with probability tending to $1$, which on using \eqref{eq:upgrade} gives
$\int_0^1 g_n'(t)dt\ge s nT_n({\bf x})Y_n^2$. Along with \eqref{eq:UpB} this gives
$nT_n({\bf X})Y_n=O_p(\sqrt{n})$, which is the claimed bound.

%
%

\end{enumerate}

%
%
%
%
%



\section{Proofs of Theorem \ref{thm:basak2} and Theorem \ref{basak}}\label{sec:three}

The main tool required for proving Theorem \ref{thm:basak2} and Theorem \ref{basak} is the following Theorem, which proves a large deviation estimate for Ising models that might be of independent interest. The proof of this theorem is very similar to the proof of \cite[Theorem 1.6]{CD} and \cite[Theorem 1.1]{BasakM}, and is placed in the appendix. See also \cite[Corollary 12]{EG} which proves a similar result for Ising models under identical conditions (i.e. \eqref{eq:bd} and \eqref{eq:mean_field}). The main difference is that \cite{EG} expresses mean field Ising models as mixture of i.i.d. laws, whereas we focus on the behavior of $m({\bf X})$ which is more relevant to the criterion $T_n({\bf X})=\Theta_p(1)$ provided in Corollary \ref{cor:rootn}.
\begin{defn}
For any ${\bf y}\in [-1,1]^n$ define a vector $b({\bf y})\in [-1,1]^n$ by setting  $b_i({\bf y}):=\tanh(\beta m_i({\bf y})+B)\in [-1,1]$. 
\end{defn}
The next result gives a crucial large deviation estimate for the random vector $b({\bf X})$.
\begin{thm}\label{lem:basak}
Suppose ${\bf X}=(X_1,\cdots,X_n)$ is an observation from the Ising model \eqref{eq:ising}, where the coupling matrix $A_n$ satisfies \eqref{eq:bd} and \eqref{eq:mean_field}.

\begin{enumerate}
\item[(a)]
Let $ r_n:=\sup_{{\bf y}\in [-1,1]^n}\{f_n({\bf y})-I({\bf y})\}$ where $$f_n({\bf y}):=\frac{\beta}{2}{\bf y}'A_n{\bf y}+B\sum_{i=1}^ny_i,\quad I({\bf y}):=\sum_{i=1}^n\Big\{\frac{1+y_i}{2}\log\frac{1+y_i}{2}+\frac{1-y_i}{2}\log \frac{1-y_i}{2}\Big\}.$$ Then we have
$$f_n(b({\bf X}))-I(b({\bf X}))-r_n=o_p(n).$$

\item[(b)]

$$||\nabla f_n(b({\bf X}))-\nabla I (b({\bf X}))||=o_p(\sqrt{n}).$$
\end{enumerate}
\end{thm}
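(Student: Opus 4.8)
The plan is to prove Theorem~\ref{lem:basak} by following the variational/large-deviation strategy that underlies \cite[Thm 1.6]{CD} and \cite[Thm 1.1]{BasakM}, and then upgrading a first-order bound to a gradient bound via the structure of the optimization problem. First I would recall that the normalizing constant satisfies $\frac{1}{n}\log Z_n(\beta,B)\to \lim_n \frac{1}{n}r_n$ (which exists, or at least is bounded, under \eqref{eq:bd} and \eqref{eq:mean_field}): this is the classical naive mean field / tilted-measure computation, where the upper bound comes from the Gibbs variational principle and the matching lower bound comes from restricting $f_n$ to product measures and using \eqref{eq:mean_field} to control the error $\frac1{n^2}\E[f_n(\mathbf X)-f_n(b(\mathbf X))]^2 \to 0$ from \eqref{eq:bas3}. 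The key identity is that under $\P_{n,\beta,B}$ the (random) vector $b(\mathbf X)$ plays the role of the mean-field fixed point: the conditional-expectation structure $b_i(\mathbf X)=\E(X_i\mid X_j,j\ne i)$ gives, after a Taylor expansion of $f_n$ and $I$ around $b(\mathbf X)$ and using \eqref{eq:bas3} together with the exponential tilting, that $f_n(b(\mathbf X))-I(b(\mathbf X))$ concentrates near $r_n$.

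For part (a) the concrete steps are: (i) show $f_n(b(\mathbf X))-I(b(\mathbf X)) \le r_n$ always, by definition of $r_n$ as a supremum over $[-1,1]^n$ (note $b(\mathbf X)\in[-1,1]^n$), so only the lower bound $f_n(b(\mathbf X))-I(b(\mathbf X)) \ge r_n - o_p(n)$ needs work; (ii) use the representation $\log Z_n(\beta,B) = \log \E_{\mathbf X\sim \mathrm{Unif}}\, e^{f_n(\mathbf X)} + n\log 2$ and the fact that, writing $g_n(\mathbf x):=f_n(\mathbf x)-I(\mathbf x)$ (appropriately interpreting $I$ on $\{-1,1\}^n$ as $-n\log 2$), the Gibbs measure concentrates on near-maximizers of $g_n$ over the hypercube; (iii) show that the discrete maximizer over $\{-1,1\}^n$ and the continuous maximizer $r_n$ over $[-1,1]^n$ differ by $o(n)$ using \eqref{eq:mean_field} (the quadratic form $\frac\beta2 \mathbf y'A_n\mathbf y$ changes by $o(n)$ when replacing $\mathbf y$ by a nearby hypercube point, because $\sum_{i,j}A_n(i,j)^2 = o(n)$ controls the fluctuation); (iv) combine with a Taylor expansion and \eqref{eq:bas3} to transfer concentration from $\mathbf X$ to $b(\mathbf X)$. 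I would use \eqref{eq:bas3} in the form that $f_n(\mathbf X)$ and $f_n(b(\mathbf X))$ agree up to $o_p(n)$, and that $\frac1n[I(\mathbf X)]$ vs a smoothed version is controlled similarly (this is where \cite[Lemma 3.1]{CD} is invoked).

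For part (b), the idea is that $b(\mathbf X)$ is an \emph{approximate critical point} of the functional $\mathbf y\mapsto f_n(\mathbf y)-I(\mathbf y)$: indeed $\nabla I(\mathbf y)$ has $i$-th coordinate $\arctanh(y_i)$ (up to a factor), and $\nabla f_n(\mathbf y)$ has $i$-th coordinate $\beta m_i(\mathbf y) + B$, so $\nabla f_n(\mathbf y) = \nabla I(\mathbf y)$ is exactly the mean-field equation $y_i = \tanh(\beta m_i(\mathbf y)+B)$, i.e. $\mathbf y = b(\mathbf y)$. The vector $b(\mathbf X)$ does not solve this exactly, but the ``defect'' $b(\mathbf X) - b(b(\mathbf X))$ is small in $\ell^2$ because applying $b$ twice is a contraction-like operation under \eqref{eq:bd}/\eqref{eq:mean_field}; more precisely I would argue that $\nabla f_n(b(\mathbf X)) - \nabla I(b(\mathbf X))$ has $i$-th coordinate $\beta m_i(b(\mathbf X)) + B - \arctanh(b_i(\mathbf X)) = \beta m_i(b(\mathbf X)) - \beta m_i(\mathbf X)$, whose $\ell^2$ norm squared is $\beta^2 \|A_n(b(\mathbf X)-\mathbf X)\|_2^2 \le \beta^2\gamma^2 \|b(\mathbf X)-\mathbf X\|_2^2$ — but this crude bound only gives $O_p(\sqrt n)$, not $o_p(\sqrt n)$, so the genuine argument must instead exploit the spectral decay of $A_n$ encoded by \eqref{eq:mean_field}: writing $\|A_n \mathbf z\|_2^2 = \sum_k \lambda_k^2 \langle \mathbf z, v_k\rangle^2$ and using $\sum_k \lambda_k^2 = \|A_n\|_F^2 = \sum_{i,j}A_n(i,j)^2 = o(n)$ together with $\|\mathbf z\|_\infty \le 2$, one gets $\|A_n(b(\mathbf X)-\mathbf X)\|_2^2 = o(n)$ unconditionally. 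That is the clean route.

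The main obstacle I expect is part (b): getting the \emph{sharp} $o_p(\sqrt n)$ rather than $O_p(\sqrt n)$. The naive bound $\|\nabla f_n(b(\mathbf X)) - \nabla I(b(\mathbf X))\|_2 = \beta\|A_n(\mathbf X - b(\mathbf X))\|_2$ combined with $\|\mathbf X - b(\mathbf X)\|_2 = \Theta(\sqrt n)$ and $\|A_n\|_2 \le \gamma$ only yields $O_p(\sqrt n)$; the improvement to $o_p(\sqrt n)$ genuinely requires using that the Frobenius norm of $A_n$ (equivalently $\sum_{i,j}A_n(i,j)^2$) is $o(n)$ under \eqref{eq:mean_field}, so that $A_n$ applied to a bounded vector lands in an $\ell^2$-ball of radius $o(\sqrt n)$ — this is the same phenomenon that makes \eqref{eq:bas3} hold, and I would prove it by the spectral/Frobenius argument sketched above, being careful that $\mathbf X - b(\mathbf X)$ is not independent of $A_n$ so one needs either a deterministic bound (which the Frobenius argument supplies, since $\|\mathbf X - b(\mathbf X)\|_\infty \le 2$ deterministically) or a conditioning argument. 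A secondary subtlety is the passage between the discrete hypercube optimum and the continuous relaxation $r_n$ in part (a), but that is standard given \eqref{eq:mean_field} and I would dispatch it by rounding.
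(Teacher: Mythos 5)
Your sketch of part (a) is roughly the right spirit (variational principle, covering/rounding between the hypercube and $[-1,1]^n$, transferring concentration from $\mathbf X$ to $b(\mathbf X)$ via the estimates of Lemma \ref{lem:basak2}), though you gloss over the net/entropy count that the paper carries out explicitly using the eigenvalue truncation $|N_n(\varepsilon)| = o(n)$. That part is recoverable.

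Part (b) contains a genuine gap. You correctly compute that $(\nabla f_n - \nabla I)(b(\mathbf X)) = -\beta A_n(\mathbf X - b(\mathbf X))$, but the ``clean route'' you propose is false: the mean-field condition \eqref{eq:mean_field} controls the Frobenius norm $\|A_n\|_F^2 = o(n)$, not the operator norm $\|A_n\|_2$, and these do \emph{not} combine with $\|\mathbf z\|_\infty \le 2$ to give $\|A_n \mathbf z\|_2^2 = o(n)$. Concretely, take $A_n$ to be the scaled adjacency of a $d_n$-regular graph with $d_n \to\infty$ (which satisfies \eqref{eq:bd} and \eqref{eq:mean_field}) and $\mathbf z = \mathbf 1$: then $A_n\mathbf 1 = \mathbf 1$ and $\|A_n \mathbf 1\|_2^2 = n$, not $o(n)$. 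In the eigenbasis this is because $\langle \mathbf z, v_k\rangle^2$ can be of size $n$ for a top eigenvector even when $\|\mathbf z\|_\infty$ is bounded; the $\ell^\infty$ constraint gives no useful handle without additional structure. So the deterministic Frobenius bound you favor does not supply what you need, and the argument as written fails.

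The paper proves (b) by an entirely different mechanism: it \emph{derives} (b) from (a). If $\|\nabla f_n(\mathbf y)-\nabla I(\mathbf y)\|^2 > n\delta$ at $\mathbf y = b(\mathbf X)\in[-p,p]^n$, then moving a short distance $t$ along the gradient direction stays inside $[-1,1]^n$ (using $p<1$) and a two-term Taylor expansion gives $f_n(\mathbf y^{(t)})-I(\mathbf y^{(t)}) \ge f_n(\mathbf y)-I(\mathbf y) + nt\delta\bigl[1-\tfrac{t}{2}(\beta\gamma+\tfrac{1}{1-p^2})\bigr]$, i.e.\ a gain of order $n\delta$, contradicting the $o_p(n)$ optimality gap of part (a). No bound on $\|A_n(\mathbf X - b(\mathbf X))\|_2$ is ever needed. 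Incidentally, your fallback ``conditioning argument'' could in principle be completed (one can show $\E\|A_n(\mathbf X - b(\mathbf X))\|_2^2 = O(\|A_n\|_F^2 + \mathrm{tr}(A_n^3)) = o(n)$ using the conditional centering of $X_j - b_j(\mathbf X)$, analogous to the proof of \eqref{eq:bas2}), but you do not develop it, and your primary route is wrong.
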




\subsection{Proof of Theorem \ref{thm:basak2}}

Since $T_n=O_p(1)$, it suffices to show that $\frac{1}{T_n}=O_p(1)$, which is equivalent to showing that for any sequence $\{\varepsilon_n\}_{n\ge 1}$ converging to $0$ we have
\begin{align}\label{eq:suffice1}
\lim_{n\rightarrow\infty}\P_{n,\beta,B}(\sum_{i=1}^n(m_i({\bf X})-\bar{m}({\bf X}))^2\le n\varepsilon_n)=0.
\end{align}
Since the set of probability measures on $[-\gamma,\gamma]$ is compact with respect to weak topology, without loss of generality by passing to subsequence we can assume the sequence of empirical measures $\frac{1}{n}\sum_{i=1}^n\delta_{\cR_n(i)}$ converge weakly to $\mu$, where $\mu$ is a probability measure on $[-\gamma,\gamma]$. This along with Dominated Convergence Theorem and  \eqref{eq:irregular} gives
\begin{align}\label{eq:ir2}
\lim_{n\rightarrow\infty}\frac{1}{n}\sum_{i=1}^n(\cR_n(i)-\bar{\cR}_n)^2=\int_{[-\gamma,\gamma]}(\theta-\E_\mu\theta)^2d\mu(\theta)>0.
\end{align}

Proceeding to show \eqref{eq:suffice1}, first note that
\begin{align}\label{eq:not1}
\max_{i\in [n]}|b_i({\bf x})|=\max_{i\in [n]}|\tanh(\beta m_i({\bf x})+B)|\le \tanh(\beta \gamma+|B|)=:p<1,
\end{align}
and so $b_i({\bf x})\in [-p,p]$. Also use \eqref{eq:bd} to note that there exists a finite positive constant $C(\beta,B,\gamma)$ such that
\begin{align*}
\sum_{i=1}^n(m_i({\bf x})-\bar{m}({\bf x}))^2=&\frac{1}{2n}\sum_{i,j=1}^n(m_i({\bf x})-m_j({\bf x}))^2\\
\ge &\frac{C(\beta,B,\gamma)}{2n} \sum_{i,j=1}^n(b_i({\bf x})-b_j({\bf x}))^2\\
=&C(\beta,B,\gamma)\sum_{i=1}^n(b_i({\bf x})-\bar{b}({\bf x}))^2,
\end{align*}
and so changing variables to $\delta_n:=\varepsilon_n/C(\beta,B,\gamma)$ for verifying \eqref{eq:suffice1} 
it suffices to check that
\begin{align}\label{eq:suffice2}
\lim_{n\rightarrow\infty}\P_{n,\beta,B}(\sum_{i=1}^n(b_i({\bf X})-\bar{b}({\bf X}))^2\le n\delta_n,\max_{i\in [n]}|b_i({\bf X})|\le p)=0.
\end{align}
To show this, note that $$\nabla f_n({\bf y})- \nabla I({\bf y})=\beta A_n{\bf y}+B{\bf 1}-\arctanh({\bf y}).$$
Thus if ${\bf y}$ is such that $\sum_{i=1}^n(y_i-\bar{y})^2\le n\delta_n$ and $\max_{i\in [n]}|y_i|\le p$, then with $\tilde{\bf y}:=\bar{y}{\bf 1}$ Triangle inequality gives
\begin{align*}
 ||\nabla f_n({{\bf y})}-\nabla I({{\bf y}}) ||\ge & ||\nabla f_n({{\bf \tilde{y}})}-\nabla I(\tilde{{\bf y}}) ||-||{\bf y}-{\bf \tilde{y}}||\Big(\beta ||A_n||_2 +\frac{1}{1-p^2}\Big)\\
\ge&||\nabla f_n({\tilde{\bf y})}-\nabla I({\tilde{\bf y}}) ||-\sqrt{n\delta_n} \Big(\beta \gamma +\frac{1}{1-p^2}\Big)\\
=&||\nabla f_n({\tilde{\bf y})}-\nabla I({\tilde{\bf y}}) ||-o(\sqrt{n}).
\end{align*}
Finally, we have
\begin{align*}
||\nabla f_n({{\bf \tilde{y}})}-\nabla I(\tilde{{\bf y}})||^2=&\sum_{i=1}^n\Big(\beta \bar{y}\cR_n(i)+B-\arctanh(\bar{y})\Big)^2\\
\ge & \inf_{t\in [-p,p]} \sum_{i=1}^n (\beta t\cR_n(i)+B-\arctanh(t))^2\\
=&n \inf_{t\in [-p,p]} \int_{-\gamma}^\gamma (\beta t\theta+B-\arctanh(t))^2 d\mu(\theta)+o(n).
\end{align*}
Combining these estimates, on the set $\sum_{i=1}^n(b_i({\bf x})-\bar{b}({\bf x}))^2\le n\delta_n$ we have
\begin{align*}
\frac{1}{\sqrt{n}}||f_n(b({\bf x}))-I(b({\bf x}))||
\ge \sqrt{ \inf_{t\in [-p,p]} \int_{-\gamma}^\gamma (\beta t\theta+B-\arctanh(t))^2 d\mu(\theta)}-o(1),
\end{align*}
from which the desired conclusion follows via part (b) of Lemma \ref{lem:basak}, if we can show that $\inf_{t\in [-p,p]}\int_{-\gamma}^\gamma (\beta t\theta+B-\arctanh(t))^2d\mu(\theta)>0$. If not, then there exists $t\in [-p,p]$ such that  $\int_{-\gamma}^\gamma (\beta t\theta+B-\arctanh(t))^2d\mu(\theta)=0$. If $t=0$ then we have
$$0=\int_{-\gamma}^\gamma (\beta t\theta+B-\arctanh(t))^2d\mu(\theta)=B^2\int_{-\gamma}^\gamma \mu(d\theta)=B^2\ne 0,$$
a contradiction. Finally if $t\ne 0$, then we have
$\theta\stackrel{a.s.}{=}\frac{\arctanh(t)-B}{\beta t}$ is a degenerate random variable, a contradiction to \eqref{eq:ir2}. This completes the proof of the Theorem.

\subsection{Proof of Theorem \ref{basak}}
For proving Theorem \ref{basak} we need the following Lemma, the proof of which follows by simple analysis and can be found for e.g. in \cite[Page 10]{dembo_montanari}).
\begin{lem}\label{lem:brazil}
Fix $(\beta,B)\in\Theta$, and define the function $$\phi(y):=\frac{\beta}{2} y^2+By-I(y),
\quad y\in [-1,1].$$ Then the function $\phi(.)$ has a unique global maximum at some $m_0\in (-1,1)$.
\end{lem}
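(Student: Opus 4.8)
The plan is to obtain a global maximizer by compactness, locate it strictly inside $(-1,1)$, and then force uniqueness through a symmetry-plus-unimodality argument. First, $\phi$ is continuous on the compact interval $[-1,1]$ (with the convention $0\log 0=0$, so that $I(\pm 1)=0$), hence it attains a global maximum. A direct computation gives $I'(y)=\arctanh(y)$ and $I''(y)=(1-y^2)^{-1}$, so $\phi'(y)=\beta y+B-\arctanh(y)$ and $\phi''(y)=\beta-(1-y^2)^{-1}$. Since $\arctanh(y)\to+\infty$ as $y\to 1^-$ and $\arctanh(y)\to-\infty$ as $y\to-1^+$, we get $\phi'(y)\to-\infty$ near $1$ and $\phi'(y)\to+\infty$ near $-1$; thus $\phi$ is strictly decreasing near $1$ and strictly increasing near $-1$, so neither endpoint can be a maximizer and every global maximizer $m_0$ lies in $(-1,1)$ and satisfies $\phi'(m_0)=0$, i.e. $m_0=\tanh(\beta m_0+B)$.

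For uniqueness, since $I$ is even, replacing $y$ by $-y$ turns the problem with parameter $B$ into the one with parameter $-B$, so we may assume $B>0$. Using again that $I$ is even, $\phi(y)-\phi(-y)=2By>0$ for all $y\in(0,1]$; hence a global maximizer cannot be negative, and it cannot equal $0$ since $\phi'(0)=B\neq 0$. Therefore it suffices to prove that $\phi$ has a unique maximizer on $[0,1]$, which I will obtain by showing that $\phi$ is strictly increasing and then strictly decreasing there.

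If $\beta\le 1$, then $\phi''\le 0$ on $(-1,1)$, so $\phi'$ is strictly decreasing; since $\phi'(0)=B>0$ and $\phi'\to-\infty$ at $1$, $\phi'$ has a unique zero $m_0\in(0,1)$, with $\phi'>0$ on $(0,m_0)$ and $\phi'<0$ on $(m_0,1)$. If $\beta>1$, put $y_\ast:=\sqrt{1-1/\beta}\in(0,1)$, so that $\phi''>0$ on $(-y_\ast,y_\ast)$ and $\phi''<0$ on $(y_\ast,1)$. Then $\phi'$ is strictly increasing on $(0,y_\ast)$, hence $\phi'>0$ there, and strictly decreasing on $(y_\ast,1)$ from the positive value $\phi'(y_\ast)$ down to $-\infty$; so again $\phi'$ has a unique zero $m_0\in(y_\ast,1)$ with $\phi'>0$ on $(0,m_0)$ and $\phi'<0$ on $(m_0,1)$. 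In both cases $m_0$ is the unique maximizer of $\phi$ on $[0,1]$, hence the unique global maximizer on $[-1,1]$, and $m_0\in(-1,1)$ as claimed.

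The only genuinely delicate case is $\beta>1$, where $\phi$ need not be concave (indeed for $B=0$ it has two global maxima, and for small $B\neq 0$ a spurious local minimum persists); the identity $\phi(y)-\phi(-y)=2By$ combined with the convex-then-concave profile of $\phi$ on $[0,1]$ is precisely what rules out a second maximizer. Everything else is elementary calculus.
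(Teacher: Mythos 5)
Your proof is correct, and it fills in a gap the paper leaves open: the authors do not give an argument for this lemma at all, instead remarking that it ``follows by simple analysis'' and citing Dembo--Montanari (page 10). Your derivation is precisely the elementary calculus one would expect behind that citation. The structure is clean: you first rule out the endpoints (since $\phi'\to\mp\infty$ at $y\to\pm 1$), then use the symmetry $\phi(y)-\phi(-y)=2By$ together with $\phi'(0)=B\neq 0$ to confine the maximizer to the open interval on the side where $B$ pushes it, and finally establish uniqueness there by the convex-then-concave shape of $\phi$. The case split $\beta\le 1$ versus $\beta>1$ with $y_\ast=\sqrt{1-1/\beta}$ correctly handles the loss of global concavity, and the parenthetical about the $B=0$ bifurcation shows you understand why the argument would genuinely fail without $B\neq 0$. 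All computations of $I'$, $I''$, $\phi'$, $\phi''$ and the sign analysis check out, and the deduction that a strictly increasing--then--decreasing function on $[0,1]$ has a unique maximizer, combined with the strict inequality $\phi(y)<\phi(-y)$ for $y<0$, gives global uniqueness. No gaps.
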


\begin{proof}[Proof of Theorem \ref{basak}]
Fixing $\varepsilon>0$, it suffices to show that 
$$\lim_{n\rightarrow\infty}\P_{n,\beta,B}(\sum_{i=1}^n(m_i({\bf x})-\bar{m}({\bf x}))^2>n\varepsilon)=0.$$
To this effect, note that $\bar{\cR}_n$ is a bounded sequence of real numbers by \eqref{eq:bd}, and so without loss of generality by passing to a subsequence, we can assume that $\bar{\cR}_n$ converges to $\theta$, say. Note that this also gives 
\begin{align}\label{eq:theta}
\sum_{i=1}^n(\cR_n(i)-\theta)^2=o(n).
\end{align}
Now, use \eqref{eq:bd} to note that there exists a finite positive constant $C(\beta,B,\gamma)$ such that
\begin{align*}
\sum_{i=1}^n(m_i({\bf x})-\bar{m}({\bf x}))^2=&\frac{1}{2n}\sum_{i,j=1}^n(m_i({\bf x})-m_j({\bf x}))^2\\
\le &\frac{C(\beta,B,\gamma)}{2n} \sum_{i,j=1}^n(b_i({\bf x})-b_j({\bf x}))^2\\
=&C(\beta,B,\gamma)\sum_{i=1}^n(b_i({\bf x})-\bar{b}({\bf x}))^2,
\end{align*}
and so with $\delta:=\varepsilon/C(\beta,B,\gamma)$ it suffices to check that
$$\lim_{n\rightarrow\infty}\P_{n,\beta,B}(\sum_{i=1}^n(b_i({\bf x})-\bar{b}({\bf x}))^2>n\delta)=0.$$
Let ${\bf y}\in [-1,1]^n$ be any vector such that $\sum_{i=1}^n(y_i-\bar{y})^2\ge n\delta$, and define a matrix $A_n^{(t)}$ by setting 
\begin{align*}
A_n^{(t)}(i,j):=&A_n(i,j)\text{ if }\max(|\cR_n(i)-\theta|,|\cR_n(j)-\theta|)\le t,\\
=& 0\text{ otherwise.}
\end{align*}
 Then we have
\begin{align}
\notag\sum_{i,j=1}^nA_n(i,j)y_iy_j\le& \sum_{i,j=1}^nA_n^{(t)}(i,j)y_iy_j+2\sum_{i:|\cR_n(i)-\theta|>t}\sum_{j=1}^nA_n(i,j)y_iy_j\\
\notag\le &\sum_{i,j=1}^nA_n^{(t)}(i,j)y_iy_j+2\gamma|\{i\in [n]:|\cR_n(i)-\theta|>t\}\\
\label{eq:t_bound}= &\sum_{i,j=1}^nA_n^{(t)}(i,j)y_iy_j+o(n),
\end{align}
where the last equality uses \eqref{eq:theta}.
Since $\sum_{j=1}^nA_n^{(t)}(i,j)\le \theta+t$, it follows all all eigenvalues of $A_n^{(t)}$ are bounded above by $\theta+t$, and so 
$$\sum_{i,j=1}^nA_n^{(t)}y_iy_j={\bf y}'A_n^{(t)}{\bf y}\le (\theta+t)\sum_{i=1}^ny_i^2,$$ which along with  \eqref{eq:t_bound} gives
\begin{align}
\notag f_n({\bf y})-I({\bf y})=&\frac{\beta}{2}{\bf y}'A_n{\bf y}+B\sum_{i=1}^ny_i-\sum_{i=1}^nI(y_i)\\
\notag\le &\frac{\beta}{2}(\theta+t)\sum_{i=1}^ny_i^2+B\sum_{i=1}^ny_i-\sum_{i=1}^nI(y_i)+o(n)\\
\le &n\beta t+\sum_{i=1}^n \phi(y_i)+o(n)\label{eq:brazil}
\end{align}
where $\phi(y):=\frac{\beta\theta}{2} y^2+By-I(y)$. By Lemma \ref{lem:brazil} it follows that $\phi(.)$ has a unique global maximum in $[-1,1]$ at some point $m_0\in (-1,1)$. Define another function
$\Phi:[-1,1]\mapsto [0,\infty)$ by setting $$\Phi(y):=\frac{\phi(m)-\phi(y)}{(y-m_0)^2}\text{ for }y\ne m_0,$$ and note that $\Phi(.)$ is strictly positive for all $y\in [-1,1]$ other than $m_0$ and satisfies
$$\lim_{y\rightarrow m_0}\Phi(y)=-2\phi''(m_0) >0.$$
Consequently, $\Phi(y)$ extends to a  strictly positive continuous function on $[-1,1]$, and so $\alpha:=\inf_{y\in [-1,1]}\Phi(y)>0$, which in turn implies
$$\phi(y)\le \phi(m_0)-\alpha(y-m_0)^2$$
for all $y\in [-1,1]$.
This, along with \eqref{eq:brazil} gives
\begin{align}\label{eq:1_up}
\sup_{{\bf y}:\sum_{i=1}^n(y_i-\bar{y})^2>n\delta}\{f_n({\bf y})-I({\bf y})\}\le o(n)+n\beta t+n\phi(m_0)-n\alpha \delta,
\end{align}
where the last inequality also uses the fact that $$\sum_{i=1}^n(y_i-m_0)^2\ge \sum_{i=1}^n(y_i-\bar{y})^2\ge n\delta.$$

To complete the proof, restricting the sup over all vector ${\bf y}$ which are constant we get
\begin{align}
\notag\sup_{{\bf y}\in [-1,1]^n}\{f_n({\bf y})-I({\bf y})\}\ge& n\sup_{y\in [-1,1]}\{\frac{\beta}{2n}y^2 {\bf 1}'A_n{\bf 1}+By-I(y)\}\\
\notag=&o(n)+n\sup_{y\in [-1,1]}\{\frac{\beta}{2}\theta y^2+By-I(y)\}\\
\label{eq:1_low}=&o(n)+n\phi(m_0)
\end{align}
where the intermediate step uses \eqref{eq:theta} to note that $${\bf 1}'A_n{\bf 1}=\sum_{i=1}^n\cR_n(i)=n\theta+\sum_{i=1}^n(\cR_n(i)-\theta)=n\theta+o(n).$$ Thus combining \eqref{eq:1_up} and \eqref{eq:1_low} gives
$$\sup_{{\bf y}\in [-1,1]^n}\{f_n({\bf y})-I({\bf y})\}-\sup_{{\bf y}:\sum_{i=1}^n(y_i-\bar{y})^2>n\delta}\{f_n({\bf y})-I({\bf y})\}\ge n\alpha \delta-n\beta t+o(n),$$
from which the desired conclusion follows on using Theorem \ref{lem:basak}, since $t>0$ is arbitrary.

\end{proof}

\section{Proof of Theorem~\ref{thm:cw} and Theorem~\ref{ppn:z1}}\label{sec:four}

\subsection{Proof of Theorem \ref{thm:cw}}
To see why the fact that $\Q_n\times \cG(n,p)$ is contiguous to $\mathbb{P}_{n,\beta,B}^{\mathrm{er}}$ implies non existence of consistent estimators, suppose there exists a consistent estimator $(\tilde{\beta}_n,\tilde{B}_n)$ on $\Theta_t$. Now fixing $(\beta_1,B_1)$ and $(\beta_2,B_2)$ in $\Theta_t$, there exists disjoint open balls $\cB_1, \cB_2$ in $\R^2$ such that $(\beta_i,B_i)\in \cB_i$ for $i=1,2$. Consistency implies 
$$\lim_{n\rightarrow\infty}\P^{\mathrm{er}}_{n,\beta_i,B_i}((\tilde{\beta}_n,\tilde{B}_n)\in \cB_i)=1,$$
which along with contiguity gives
$$\lim_{n\rightarrow\infty}(\Q_n\times \cG(n,p))((\tilde{\beta}_n,\tilde{B}_n)\in \cB_i))=1.$$
But this is a contradiction, as $\cB_1$ and $\cB_2$ are disjoint, thus completing the proof of the Theorem. 
\\

The rest of the proof is broken into two parts. Part (a) shows that the probability sequence $\Q_n$ is contiguous to the Curie Weiss model $\mathbb{P}_{n,\beta,B}^{\mathrm{cw}}$ (Ising model on the complete graph). Part (b) then shows that $\mathbb{P}_{n,\beta,B}^{\mathrm{cw}}\times \cG(n,p)$  is contiguous to $\mathbb{P}_{n,\beta,B}^{\mathrm{er}}$.

\begin{enumerate}
\item[(a)]
 By \cite[Lemma 3]{MMY} we have the existence of a random variable $W_n$ with density proportional to $e^{-nf(w)}$, where $f(w):=\beta w^2/2+Bw -\log\cosh(w)$. Also given $W_n=w$ we have $X_1,\cdots,X_n$ are i.i.d. random variables on $\{-1,1\}$ such that
$$\P^{\mathrm{cw}}_{n,\beta,B}(X_i=1|W_n=w)=\frac{e^{\beta w+B}}{e^{\beta w+B}+e^{-\beta w-B}}=1-\P^{\mathrm{cw}}_{n,\beta,B}(X_i=-1).$$
Finally, the conditional distribution of $W_n$ given $X_1=x_1,\cdots,X_n=x_n$ is $N(\bar{x},\frac{1}{n\beta})$. By a slight abuse of notation we use $\P_{n,\beta,B}^{\mathrm{cw}}$ to denote the joint law of $(X_1,\cdots,X_n)$ and $W_n$ on $\{-1,1\}^n\times\R$. Similarly, extend $\Q_{n}$ to $\{-1,1\}^n\times \R$ by setting $W_n$ to be independent of $(X_1,\cdots,X_n)$ with a density proportional to $e^{-f_n(w)}$. Thus under both $\P^{\mathrm{cw}}_{n,\beta,B}$ and $\Q_n$ the marginal distribution of $W_n$ is the same.

We now show that $\P^{\mathrm{cw}}_{n,\beta,B}$ is contiguous to  $\Q_{n}$. To this effect, using \cite{Ellis_Newman} under $\P^{\mathrm{cw}}_{n,\beta,B}$ we have
$$\sqrt{n}(\bar{X}_n-t)\stackrel{d}{\rightarrow}N\Big(0,\frac{1-t^2}{1-\theta(1-t^2)}\Big).$$
This implies
$$\sqrt{n}(W_n-t)\stackrel{d}{\rightarrow}N\Big(0,\frac{1}{\theta[1-\theta(1-t^2)]}\Big)$$
under both $\P^{\mathrm{cw}}_{n,\beta,B}$ and $\Q_n$, as the marginal law of $W_n$ is the same under both measures.
Using this along with a one term Taylor's expansion gives
$$\P^{\mathrm{cw}}_{n,\beta,B}(X_i=1|W_n)=\frac{1}{1+e^{-2\beta W_n-2B}}=\frac{1}{1+e^{-2\beta t-2B}}+\frac{\widetilde{W}_n}{\sqrt{n}}=\alpha+\frac{\widetilde{W}_n}{\sqrt{n}}$$
where $\widetilde{W}_n:=\xi_n\sqrt{n}(W_n-t)$
for some  bounded random variable $\xi_n$, and $\alpha:=\frac{1}{1+e^{-2\beta t-2B}}$. Since $\sqrt{n}(W_n-t)$ is $O_p(1)$ under $\Q_{n}$, it follows that $\widetilde{W}_n=O_p(1)$ as well. Also, setting $S_n:=|i\in [n]:X_i=1|$ we have
$\frac{S_n-n\alpha }{\sqrt{n}}=O_p(1)$
under $\Q_n$.
On the set $|\widetilde{W}_n|\le K$ and $|S_n-n\alpha |\le K\sqrt{n}$  we have
\begin{align*}
&\log \frac{\Q_n(X_1=x_1,\cdots,X_n=x_n|W_n=w)}{\P^{\mathrm{cw}}_{n,\beta_1,B_1}(X_1=x_1,\cdots,X_n=x_n|W_n=w)}\\
=&-S_n\log \frac{\alpha+\frac{\widetilde{W}_n}{\sqrt{n}}}{\alpha}-(n-S_n)\log \frac{1-\alpha-\frac{\widetilde{W}_n}{\sqrt{n}}}{1-\alpha}\\
=&-S_n\Big[\frac{\widetilde{W}_n}{\alpha\sqrt{n}}+O\Big(\frac{K^2}{n}\Big)\Big]+(n-S_n)\Big[\frac{\widetilde{W}_n}{(1-\alpha)\sqrt{n}}+O\Big(\frac{K^2}{n}\Big)\Big]\\
=&-\frac{\widetilde{W}_n}{\sqrt{n}}\Big[\frac{S_n}{\alpha}-\frac{n-S_n}{1-\alpha}\Big]+O(K^2)\\
=&-\frac{\widetilde{W}_n}{\sqrt{n}}\Big[\frac{S_n-n\alpha}{\alpha(1-\alpha)}\Big]+O(K^2)\\
\le &\frac{K^2}{\alpha(1-\alpha)}+O(K^2)=:z_K.
\end{align*}
Thus if $A_n\subset \{-1,1\}^n$ is any sequence of sets such that $\lim_{n\rightarrow\infty}\P^{\mathrm{cw}}_{n,\beta,B}({\bf X}\in A_n)=0,$ then denoting $C_n:=(\int_\R e^{-f_n(w)}dw)^{-1}$ we have
\begin{align*}
&\Q_{n}({\bf X}\in A_n,|\widetilde{W}_n|\le K,|S_n-np|\le K\sqrt{n})\\
=&C_n\int_\R \Q_n({\bf X}\in A_n,|\widetilde{W}_n|\le K,|S_n-np|\le K\sqrt{n}|W_n=w)e^{-f_n(w)}dw\\
\le & C_nz_K\int_\R \P^{\mathrm{cw}}_{n,\beta,B}(A_n,|\widetilde{W}_n|\le K, |S_n-np|\le K\sqrt{n}|W_n=w)e^{-f_n(w)}dw\\
= &z_K\P^{\mathrm{cw}}_{n,\beta,B}({\bf X}\in A_n,|\widetilde{W}_n|\le K,|S_n-np|\le K\sqrt{n})\\
\le&z_KP^{\mathrm{cw}}_{n,\beta,B}({\bf X}\in A_n).
\end{align*}
This gives
$$\Q_n({\bf X}\in A_n)\le z_K\P^{\mathrm{cw}}_{n,\beta,B}({\bf X}\in A_n)+\Q_n(|\widetilde{W}_n|>K)+\Q_n(|S_n-np|>K\sqrt{n}),$$
which on letting $n\rightarrow\infty$ followed by $K\rightarrow\infty$ gives
$$\limsup_{n\rightarrow\infty}\Q_{n}({\bf X}\in A_n)=0,$$
and so $\Q_n$ is contiguous to $\P^{\mathrm{cw}}_{n,\beta,B}$ and the proof is complete. Even though we don't need it, we note that in this case a symmetric proof gives the reverse conclusion as well, i.e. $\P^{\mathrm{cw}}_{n,\beta,B}$ and $\Q_n$ are mutually contiguous.
\\

\item[(b)]
%
%
%



We now show that $\mathbb{P}^{\mathrm{cw}}_{n,\beta, B}\times \cG(n,p)$ is contiguous to $\mathbb{P}^{\mathrm{er}}_{n,\beta, B}$, for which invoking Proposition~6.1 of \cite{BhattacharyaM} it further suffices to show that
 $D(\mathbb{P}^{\mathrm{cw}}_{n,\beta, B}\times \cG(n,p)||\mathbb{P}^{\mathrm{er}}_{n,\beta, B})=O(1),$ where $D(.||.)$ is the Kullback Leibler divergence. A direct computation shows that $ D( \mathbb{P}^{\mathrm{cw}}_{n,\beta, B}\times\cG(n,p)|| \mathbb{P}^{\mathrm{er}}_{n,\beta, B})$ equals
 \begin{align*}
&
\E_{\cG(n,p)} \sum_{{\bf x}\in \{-1,1\}^n} \mathbb{P}^{\mathrm{cw}}_{n,\beta,B}({\bf x}) \left(\frac{\beta}{n-1}\sum_{i,j=1}^n\left[1-\frac{G_n(i,j)}{p}\right]x_ix_j+\log \frac{Z^{\mathrm{er}}_n(\beta,B)}{Z^{\mathrm{cw}}_n(\beta, B)} \right)\\
= &\mathbb{E}_{\mathcal{G}(n,p)}\log Z^{\mathrm{er}}_n(\beta,B)- \log Z^{\mathrm{cw}}_n(\beta, B)
\le \log \mathbb{E}_{\mathcal{G}(n,p)} Z^{\mathrm{er}}_n(\beta,B)- \log Z^{\mathrm{cw}}_n(\beta, B),
\end{align*}
where the last inequality is by Jensen's inequality, and $Z_n^{\mathrm{cw}}(\beta,B)$ and $Z_n^{\mathrm{er}}(\beta,B)$ denote the normalizing constants for the corresponding Ising models. Finally note that
\begin{align*}
\mathbb{E}_{\mathcal{G}(n,p)}Z^{\mathrm{er}}_n(\beta, B)& = \sum_{{\bf x}\in \{-1,1\}^n} \prod_{1\le i<j\le n}^n \mathbb{E}_{\mathcal{G}(n,p)} e^{\frac{\beta}{(n-1)p}G_n(i,j)x_ix_j +B\sum_{i=1}^nx_i}\\
&\leq  \sum_{{\bf x}\in \{-1,1\}^n} \prod_{1\le i<j\le n} \exp\left(\frac{\beta}{n-1} x_ix_j+ \frac{\beta^2 p(1-p)}{2n^2 p^2}\right) e^{B\sum_{i=1}^nx_i},
\end{align*}
where we use the bound
 $\mathbb{E}e^{t(Bin(1,p)-p)}\leq \exp(t^2p(1-p)/2)$. Combining we have 
\[\mathbb{E}_{\mathcal{G}(n,p)} Z^{\mathrm{er}}_n(\beta, B)\le e^{\frac{\beta^2(1-p)}{4p}}\sum_{\vec{x}\in \{-1,1\}^n} e^{\frac{\beta}{n-1}+B\sum_{i=1}^nx_i}=  e^{\frac{\beta^2(1-p)}{4p}} Z^{\mathrm{cw}}_n(\beta, B),\]
which gives an upper bound of 
$\frac{\beta^2(1-p)}{4p}$ to the Kullback-Leibler divergence, and hence completes the proof of the Theorem.

\end{enumerate}

\begin{remark}
To show a similar impossibility result for general dense regular graphs, we need to get exact upper bounds on $Z_n(\beta,B)$. More precisely, we need to show that $Z_n(\beta,B)\le O(1) Z_n^{\mathrm{cw}}(\beta,B)$. 
\end{remark}

\subsection{Proof of Theorem \ref{ppn:z1}}

For proving Theorem \ref{ppn:z1} we need the following two Lemmas. The proofs of the Lemmas are deferred to the end of the section.
The first lemma gives an estimate similar to Lemma \ref{lem:basak2}.

\begin{lem}\label{lem:sparse_new}
Suppose ${\bf X}=(X_1,\cdots,X_n)$ is an observation from the Ising model \eqref{eq:ising}, where the coupling matrix $A_n$ satisfies \eqref{eq:bd} and \eqref{eq:non_trivial}, and $(\beta,B)\in \Theta$. Then we have
\begin{align*}
\limsup_{n\rightarrow\infty}\frac{1}{n}\E \Big[\sum_{i=1}^n(m_i({\bf x})- \sum_{j=1}^n A_n(i,j)\tanh(\beta m_j({\bf x})+B))\Big]^2 <\infty.
\end{align*}
\end{lem}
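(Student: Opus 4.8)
The plan is to recognize the quantity inside the square as the $B$-derivative (up to sign) of a natural linear functional of the conditional expectation vector, and then to exploit a martingale / exchangeable-pair style argument adapted to the Ising model. Concretely, write $b_j({\bf x}) := \tanh(\beta m_j({\bf x})+B)$ and note that
\[
\sum_{i=1}^n\Big(m_i({\bf x})-\sum_{j=1}^nA_n(i,j)b_j({\bf x})\Big)
=\sum_{j=1}^n\cR_n(j)\,(x_j-b_j({\bf x}))
+\sum_{j=1}^n\Big(\sum_{i=1}^nA_n(i,j)b_j({\bf x})-\sum_{i=1}^nA_n(i,j)b_j({\bf x})\Big),
\]
so after rearranging (using symmetry of $A_n$ and the definition $m_i({\bf x})=\sum_jA_n(i,j)x_j$), the target sum equals $\sum_{j=1}^n \big(A_n{\bf x}\big)_j - \big(A_n b({\bf x})\big)_j$ contracted against ${\bf 1}$, i.e.\ ${\bf 1}'A_n({\bf x}-b({\bf x}))=\sum_{j=1}^n\cR_n(j)(x_j-b_j({\bf x}))$. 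First I would make this algebraic identification cleanly, reducing the claim to showing
\[
\limsup_{n\to\infty}\frac1n\,\E\Big[\sum_{j=1}^n\cR_n(j)\big(X_j-b_j({\bf X})\big)\Big]^2<\infty .
\]

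The main step is then to bound the second moment of $W:=\sum_j c_j(X_j-b_j({\bf X}))$ where $c_j:=\cR_n(j)\in[0,\gamma]$ are deterministic weights bounded by $\gamma$. This is exactly the type of estimate handled by the approach behind \eqref{eq:bas2} and \eqref{eq:bas1} in Lemma \ref{lem:basak2} (and \cite[Lemma 3.2]{CD}, \cite[Lemma 1]{MMY}): one conditions on $X_j$, $j\ne i$, uses that $\E[X_i - b_i({\bf X})\mid X_j,j\ne i]=0$, and writes $W$ via a telescoping/Gibbs-sampler decomposition. The cross terms $\E[(X_i-b_i)(X_j-b_j)]$ for $i\ne j$ are controlled using the fact that changing one coordinate $x_k$ changes each $b_i({\bf x})$ by $O(A_n(i,k))$ (Lipschitz bound on $\tanh$ together with $|m_i({\bf x})-m_i({\bf x}')|\le 2A_n(i,k)$), and summing these perturbations against the weights $c_i$ uses $\sum_i A_n(i,k)\le\gamma$ from \eqref{eq:bd}; the diagonal terms contribute $O(n)$ trivially since $|c_j|,|X_j-b_j|$ are bounded. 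I would organize this exactly as in the proof of \eqref{eq:bas2}: the bounded weights $\cR_n(j)$ play the same role that $m_i({\bf x})$ played there, and $\max_i\cR_n(i)\le\gamma$ is the analogue of $\|m({\bf x})\|_\infty\le\gamma$, so the proof of \eqref{eq:bas2} applies essentially verbatim with the weight vector $m({\bf x})$ replaced by the fixed vector $(\cR_n(1),\dots,\cR_n(n))$.

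The potential obstacle is that, unlike \eqref{eq:bas2}, the present statement is asserted only under \eqref{eq:bd} and \eqref{eq:non_trivial} — not under the mean-field condition \eqref{eq:mean_field} — so I should double-check that the second-moment bound for $\sum_j c_j(X_j-b_j({\bf X}))$ genuinely needs only \eqref{eq:bd}. It does: the concentration of linear statistics of the form $\sum_j c_j(X_j-b_j({\bf X}))$ around $0$ is driven by $\|A_n\|_\infty\le\gamma$ and does not require sparsity of $A_n$; this is precisely the content of \cite[Lemma 1]{MMY} which gives $\E[\sum_j(X_j-b_j({\bf X}))]^2=O(n)$ under \eqref{eq:bd} alone, and the weighted version with bounded weights follows by the same argument (or by applying the unweighted bound to the Ising model after absorbing bounded weights, keeping track of constants depending on $\gamma$). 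So the write-up is: (i) the algebraic reduction to $\E[{\bf 1}'A_n({\bf X}-b({\bf X}))]^2$; (ii) cite/adapt the proof of \eqref{eq:bas2}, or \cite[Lemma 1]{MMY}, with weights $\cR_n(j)\le\gamma$; (iii) collect the $O(n)$ bounds. I expect (ii) — verifying that the weighted variant goes through with constants depending only on $\beta,B,\gamma$ — to be the only place requiring genuine care.
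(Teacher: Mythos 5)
Your proposal is correct and follows essentially the same route as the paper: the same algebraic reduction (by symmetry of $A_n$) to $\sum_j \cR_n(j)(X_j-b_j({\bf X}))$, followed by expanding the square, bounding the diagonal by $O(n\gamma^2)$, and controlling the off-diagonal terms via the zero conditional mean $\E[X_i-b_i({\bf X})\mid X_k,k\ne i]=0$ together with the Lipschitz-in-one-coordinate bound $|\E(X_i-b_i)(X_j-b_j)|\le\beta A_n(i,j)$ and the row-sum condition \eqref{eq:bd}, giving $O(n\gamma^3)$. Your observation that the argument needs only \eqref{eq:bd} and not the mean-field condition is also the right call (and matches the paper); the only cosmetic issue is the self-cancelling middle term in your first display, which you should just drop.
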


The second lemma proves an estimate for the Curie-Weiss model, which is necessary for completing the proof of Theorem \ref{ppn:z1}.

\begin{lem}\label{lem:lipschitz}
Let $\beta_n$ be a sequence of positive reals bounded away from $0$ and $+\infty$, and $A_n$ be a matrix satisfying \eqref{eq:bd} and \eqref{eq:Sparseness}. 
Then for any $B\in \R$ there exists $\delta>0$ such that under the \text{Curie-Weiss }model $\P^{\mathrm{cw}}_{n,\beta_n,B}$ we have
$$\limsup_{n\rightarrow\infty}\frac{1}{n}\log \P^{\mathrm{cw}}_{n,\beta_n,B}(\sum_{i=1}^n(m_i({\bf x})-\bar{m}({\bf x}))^2\le n\delta)<0.$$
where $m({\bf x}):=A_n{\bf x}$.
\end{lem}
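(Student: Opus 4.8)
The plan is to exploit that, under the Curie--Weiss law $\P^{\mathrm{cw}}_{n,\beta_n,B}$, the spins $X_1,\dots,X_n$ are \emph{conditionally i.i.d.}, so that the quadratic statistic $\sum_i(m_i({\bf x})-\bar m({\bf x}))^2$ — even though $m_i$ is built from the sparse matrix $A_n$, which the Curie--Weiss law does not "see" — concentrates around a deterministic quantity that \eqref{eq:Sparseness} forces to be of order $n$. Concretely, as recalled in the proof of Theorem~\ref{thm:cw}(a), under $\P^{\mathrm{cw}}_{n,\beta_n,B}$ there is a random variable $W_n$ such that, given $W_n=w$, the $X_i$ are i.i.d.\ with $\E(X_i\mid W_n=w)=\tanh(\beta_n w+B)=:\mu(w)$, and the conditional law of $W_n$ given ${\bf X}={\bf x}$ is $N(\bar x,(n\beta_n)^{-1})$. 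Since $|\bar X_n|\le 1$ and $\beta_n$ is bounded away from $0$, a Gaussian tail estimate gives $\P^{\mathrm{cw}}_{n,\beta_n,B}(|W_n|>1+\varepsilon)\le e^{-c_1 n}$ for each fixed $\varepsilon>0$; and on $\{|W_n|\le 1+\varepsilon\}$, using that $\beta_n$ is also bounded above by some $\bar\beta<\infty$, we have $|\mu(W_n)|\le\tanh(\bar\beta(1+\varepsilon)+|B|)=:\bar\mu<1$.

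I would then compute the conditional mean of the statistic. Writing $c_{ij}:=A_n(i,j)-\bar A(j)$ with $\bar A(j):=\frac1n\sum_k A_n(k,j)$, one has $m_i({\bf x})-\bar m({\bf x})=\sum_j c_{ij}x_j$, so that for i.i.d.\ spins of mean $\mu$ and variance $1-\mu^2$, $\E[(m_i-\bar m)^2\mid W_n=w]\ge(1-\mu(w)^2)\sum_j c_{ij}^2$. Summing over $i$ and using the identity $\frac1n\sum_{i,j}c_{ij}^2=\frac1n\sum_{i,j}A_n(i,j)^2-\sum_j\bar A(j)^2$, together with the facts that $\sum_j\bar A(j)^2=n^{-2}\sum_j\cR_n(j)^2\le\gamma^2/n\to0$ (using symmetry of $A_n$ and \eqref{eq:bd}) and $\frac1n\sum_{i,j}A_n(i,j)^2\ge c_0>0$ eventually (by \eqref{eq:Sparseness}), I obtain that on $\{|W_n|\le 1+\varepsilon\}$ the conditional mean of $\frac1n\sum_i(m_i-\bar m)^2$ is at least $(1-\bar\mu^2)c_0/2=:2\delta>0$, uniformly in the conditioning value $w$.

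For the concentration, flipping one coordinate $x_k$ changes each $m_i$ by $\pm2A_n(i,k)$ while $|m_i|\le\gamma$ and $A_n(i,k)\le\gamma$, so it changes $\frac1n\sum_i m_i({\bf x})^2$ and $\bar m({\bf x})^2$ by at most $C\cR_n(k)/n$ for an absolute $C=C(\gamma)$; hence the bounded-difference constants $D_k$ of the map ${\bf x}\mapsto\frac1n\sum_i(m_i({\bf x})-\bar m({\bf x}))^2$ satisfy $\sum_k D_k^2\le C^2 n^{-2}\sum_k\cR_n(k)^2\le C^2\gamma^2/n$ by \eqref{eq:bd}. McDiarmid's inequality applied conditionally on $W_n=w$ (where the $X_i$ are independent) then yields, for $|w|\le 1+\varepsilon$, $\P^{\mathrm{cw}}_{n,\beta_n,B}(\frac1n\sum_i(m_i-\bar m)^2\le\delta\mid W_n=w)\le\exp(-2\delta^2 n/(C^2\gamma^2))$. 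Integrating over $W_n$ and adding $\P^{\mathrm{cw}}_{n,\beta_n,B}(|W_n|>1+\varepsilon)\le e^{-c_1 n}$ gives $\P^{\mathrm{cw}}_{n,\beta_n,B}(\sum_i(m_i-\bar m)^2\le n\delta)\le e^{-c_1 n}+e^{-c_2 n}$, which is exactly the claim with this $\delta$.

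The main obstacle is less these estimates than getting the conceptual setup right: one must recognise that the Curie--Weiss measure supplies $\Theta(n)$ worth of conditionally independent randomness which, when fed through $A_n$, cannot be killed by the centering $\bar m$ precisely because of \eqref{eq:Sparseness}. The one technical point that genuinely needs care is keeping the conditional mean $\mu(W_n)$ bounded strictly inside $(-1,1)$ with overwhelming probability — equivalently, the exponential concentration of $W_n$ on a compact subset of $(-1,1)$ — since this is what prevents the conditional variance $1-\mu(W_n)^2$, and hence the whole lower bound, from degenerating; this is exactly where $\beta_n$ bounded away from both $0$ and $\infty$ is used.
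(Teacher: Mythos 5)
Your proposal is correct, and it follows the same conceptual skeleton as the paper — condition on the Curie--Weiss auxiliary variable $W_n$ to make the spins i.i.d., control $W_n$ on a compact set by its Gaussian conditional law, show the conditional mean of the quadratic statistic is $\Theta(n)$ using \eqref{eq:Sparseness} and $\sum_j\cR_n(j)^2=O(n)$, and then apply a concentration inequality. The technical tool differs: you apply McDiarmid's bounded-differences inequality directly to $f({\bf x})=\frac1n\sum_i(m_i({\bf x})-\bar m({\bf x}))^2$, computing $D_k\le C\gamma\cR_n(k)/n$ so that $\sum_k D_k^2\le C^2\gamma^4/n$; the paper instead writes $Y_n=\sqrt{\sum_i(m_i-\bar m)^2}=\sup_{\|{\bf a}\|_2\le1}\sum_i(\tilde a_i-\bar{\tilde a})X_i$ with $\tilde{\bf a}=A_n{\bf a}$ and invokes a Talagrand-type lower-tail inequality for suprema of empirical processes (\cite[Theorem 12.3]{BLM}), then lower-bounds $\E(Y_n\mid W_n)$ through a Cauchy--Schwarz step to get from the first moment to the second. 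Your route is arguably more elementary (McDiarmid rather than the suprema-of-empirical-processes machinery), avoids the countability-of-the-sup technicality, and works with the natural quadratic statistic rather than its square root; the paper's route is closer in spirit to the nonlinear large deviation toolkit used elsewhere in the paper. Both ultimately control the same quantity $\sum_k\cR_n(k)^2/n^2=O(1/n)$ via \eqref{eq:bd}, and both produce an $e^{-cn}$ bound. One small remark: you silently use the symmetry of $A_n$ when identifying $\bar A(j)=\frac1n\sum_kA_n(k,j)$ with $\cR_n(j)/n$ and when writing $\sum_iA_n(i,k)=\cR_n(k)$ in the bounded-difference bound; this is assumed throughout the paper and is harmless, but worth flagging explicitly.
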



\begin{proof}[Proof of Theorem \ref{ppn:z1}]
It suffices to show that given an arbitrary sequence of positive reals $\{\varepsilon_n\}_{n\ge 1}$ we have
\begin{align*}
\lim_{n\rightarrow\infty}\P_{n,\beta,B}(\sum_{i=1}^n(m_i({\bf x})-\bar{m}({\bf x}))^2\le n\varepsilon_{n})=0.
\end{align*}
Assume by way of contradiction that 
\begin{equation*}
\limsup_{n\rightarrow\infty}\P_{n,\beta,B}(E_n)>0,\quad 
E_n:=\left\{\sum_{i=1}^n (m_i({\bf x})-\bar{m}({\bf x}))^2 \leq n\epsilon_n \right\}.
\end{equation*}
Using \eqref{eq:bas1} and Lemma \ref{lem:sparse_new}  and increasing the value of $\varepsilon_n$ if necessary, we have  $\lim_{n\rightarrow\infty}\P_{n,\beta,B}( C_n\cap D_n)=1$, where
\begin{align*}
C_n:=&\Big\{\sum_{i=1}^n(x_i-\tanh(\beta m_i({\bf x})+B))\le n\varepsilon_n\Big\},\\
D_n:=&\Big\{\sum_{i=1}^n(m_i({\bf x})-\sum_{j=1}^nA_n(i,j)\tanh(\beta m_j({\bf x})+B))\le n\varepsilon_n\Big\}.
\end{align*}
Combining this gives $\limsup_{n\rightarrow\infty}\P_{n,\beta,B}( C_n\cap D_n\cap E_n)>0$, and so there is a subsequence along which $\P_{n,\beta,B}( C_n\cap D_n\cap E_n)\ge \delta$ for some $\delta>0$.  Restricting ourself to this subsequence, 
on the set $ C_n\cap D_n\cap E_n$ we have
 \begin{eqnarray*}
 \sum_{i=1}^nx_i&\stackrel{C_n}{=}&\sum_{i=1}^n\tanh(\beta m_i({\bf x})+B)+o(n)\stackrel{E_n}{=}n\tanh(\beta \bar{m}({\bf x}) +B)+o(n),\\
n\bar{\cR}_n\tanh(\beta \bar{m}({\bf x}) +B)&\stackrel{E_n}{=}&\sum_{i=1}^n \sum_{j=1}^n A_n(i,j)\tanh(\beta m_i({\bf x}) +B)  +o(n)\stackrel{D_n}{=}\sum_{i=1}^n m_i({\bf x})+o(n) .
\end{eqnarray*} 
In the above sequence of equations, the $o(n)$ terms are uniform non random bounds over the set $C_n\cap D_n\cap E_n$ which on dividing by $n$ go to $0$ as $n\rightarrow\infty$, and are not made explicit for the sake of clarity.
On combining the above two equations we get $\bar{\cR}_n\sum_{i=1}^nx_i=\sum_{i=1}^nm_i+o(n)$,
using which gives
 $$\sum_{i=1}^nx_im_i({\bf x})\stackrel{E_n}{=}\bar{m}\sum_{i=1}^nx_i=n\bar{\cR}_n\bar{x}^2+o(n).$$
 Thus, when ${\bf x}\in C_n\cap D_n\cap E_n$, we have
 \begin{align*}
\delta\le & \P_{n,\beta,B}( C_n\cap D_n\cap E_n)\\
=&\frac{1}{Z_n(\beta,B)}\sum_{{\bf x}\in  C_n\cap D_n\cap E_n} e^{\frac{\beta}{2}{\bf x}'A_n{\bf x}+B\sum_{i=1}^nx_i}\\
\le &\frac{e^{o(n)}}{Z_n(\beta,B)}\sum_{{\bf x}\in C_n\cap D_n\cap E_n} e^{\frac{\beta\bar{\cR}_n}{2}\bar{x}^2+nB\bar{x}}\\
=& e^{o(n)}\frac{Z_{n}^{\mathrm{cw}}(\beta
_n,B)}{Z_n(\beta,B)}\P_{n,\beta_n,B}^{\mathrm{cw}}( C_n\cap D_n\cap E_n),
\end{align*}
where $\beta_n:=\beta\bar{ \cR}_n$ is a sequence of positive real bounded away from $\infty$ and $0$ by \eqref{eq:bd} and \eqref{eq:Sparseness} respectively, and $\P_{n,\beta_n,B}^{\mathrm{cw}}$ is the Curie-Weiss model with parameters $(\beta_n,B)$.
Now, using \cite[(1.8)]{BasakM} and \cite[(2.3)]{BasakM} we get
\begin{align*}
\log Z_n(\beta,B)\ge &n\sup_{t\in [-1,1]}\Big\{\frac{\beta}{2}\bar{\cR}_nt^2+Bt-I(t)\Big\},\\
\log Z_n^{\mathrm{cw}}(\beta_n,B)=&n\sup_{t\in [-1,1]}\Big\{\frac{\beta}{2}\bar{\cR}_nt^2+Bt-I(t)\Big\}+o(n)
\end{align*}
respectively, which readily gives
$$\delta\ge e^{o(n)} \P_{n,\beta_n,B}^{\mathrm{cw}}( C_n\cap D_n\cap E_n)\le e^{o(n)} \P_{n,\beta_n,B}^{\mathrm{cw}}(E_n),$$
which on taking $\log$, dividing by $n$ and letting $n\rightarrow\infty$ gives
$$\liminf_{n\rightarrow\infty}\frac{1}{n}\log  \P_{n,\beta_n,B}^{\mathrm{cw}}(E_n)=0.$$ But this is a contradiction to Lemma \ref{lem:lipschitz},  which completes the proof of the Theorem.

\end{proof}

\begin{proof}[Proof of Lemma \ref{lem:sparse_new}]
To begin note that
$$\sum_{i=1}^n (m_i({\bf x})- \sum_{j=1}^n A_n(i,j)\tanh(\beta m_j({\bf x})+B))=\sum_{j=1}^n\cR_n(j)((x_j-b_j({\bf x})),$$
where $b_j({\bf x})=\tanh(\beta m_i({\bf x})+B)$ as in Lemma \ref{lem:basak2}. This on squaring and expanding gives
\begin{align}\label{eq:bound_sparse_new}
\notag&\E \Big[\sum_{i=1}^n(m_i({\bf X})- \sum_{j=1}^n A_n(i,j)\tanh(\beta m_j({\bf X})+B))\Big]^2\\
= &\sum_{j=1}^n\cR_n(j)^2\E(X_j-b_j({\bf X}))^2+\sum_{j\ne k}\cR_n(j)\cR_n(k)\E(X_j-b_j({\bf X}))(X_k-b_k({\bf X})),
\end{align}
where the first term in \eqref{eq:bound_sparse_new} is bounded by $n\gamma^2$ by \eqref{eq:bd}. Proceeding to bound the second term, setting $m_j^{(i)}({\bf X})=\sum_{k\ne i}A(j,k)x_k$ we have    
\begin{align*}
&\mathbb{E}(X_i-\tanh(\beta m_i({\bf X})+B))(X_j- \tanh(\beta m^{(i)}_j(\mathbf{X})+B))\\=&\mathbb{E}\left\{\mathbb{E}(X_i-\tanh(\beta m_i({\bf X})+B))\big| \{X_k, k\neq i\})(X_j- \tanh(\beta m^{(i)}_j(\mathbf{X})+B))\right\}
= 0,
\end{align*}
which gives
\begin{align*}
&\Big| \mathbb{E}(X_i-\tanh(\beta m_i({\bf X})+B))(X_j- \tanh(\beta m_j(\mathbf{X})+B))\Big|\\
=&\Big| \mathbb{E}(X_i-\tanh(\beta m_i({\bf X})+B))(\tanh(\beta m_j^{(i)}(\mathbf{X})+B)- \tanh(\beta m_j(\mathbf{X})+B))\Big|\\
\le &\beta A_n(i,j).
\end{align*}
Summing over $i\ne j$, the second term in \eqref{eq:bound_sparse_new} is bounded by 
\begin{align*}
\sum_{i\neq j} \mathcal{R}_n(i)\mathcal{R}_n(j)\beta A_n(i,j) \le n\beta \gamma^3,
\end{align*}
Using \eqref{eq:bound_sparse_new} and combining we get
\begin{align}
\E \Big[\sum_{i=1}^n(m_i({\bf X})- \sum_{j=1}^n A_n(i,j)\tanh(\beta m_j({\bf X})+B))\Big]^2\le n\gamma^2+n\beta \gamma^3.
\end{align}    
This completes the proof of the Lemma. 
\end{proof}

\begin{proof}[Proof of Lemma \ref{lem:lipschitz}]

To begin, use \cite[Lemma 3]{MMY} to note that there exists a random variable $W_n$, such that given $W_n=w$ we have $(X_1,\cdots,X_n)$ are i.i.d. with $$\P^{\mathrm{cw}}_{n,\beta_n,B}(X_i=1)=\frac{e^{\beta_n w+B}}{e^{\beta_n w+B}+e^{-\beta_n w-B}}=1-\P^{\mathrm{cw}}_{n,\beta_n,B}(X_i=-1).$$
Also note that $$Y_n:=\sqrt{\sum_{i=1}^n(m_i({\bf x})-\bar{m}({\bf x}))^2}=\sup_{||{\bf a}||_2\le 1}a_i(m_i({\bf x})-\bar{m}({\bf x}))=\sup_{||{\bf a}||_2\le 1}\sum_{i=1}^n(\tilde{a}_i-\bar{\tilde{{a}}})x_i,$$
where ${\bf\tilde{ a}}:=A_n{\bf a}$. 
Thus invoking \cite[Theorem 12.3]{BLM} gives
      \begin{align}\label{eq:Lowertail}
      \mathbb{P}\left(Y_n\leq \mathbb{E}(Y_n|W_n)-2\sqrt{t\Sigma_n}\Big| W_n\right)\leq 4e^{-\frac{t}{4}},
      \end{align}
      where \[ \Sigma_n:= \mathbb{E}\left[ \sup_{||{\bf a}||\le 1}\sum_{i=1}^n (\tilde{a}_i-\tilde{\bar{a}})^2(X^{\prime}_i -X_i)^2\big|X_1,\cdots,X_n,W_n\right]\] 
       and $(X^{\prime}_1,\ldots ,X^{\prime}_n)$ is an independent and identically distributed copy from the conditional distribution of $(X_1|W_n)$. This result requires the supremum to be over a countable set, but of course we can restrict the supremum to be over all rational tuples in $||{\bf a}||\le1$, which is countable. 
       In order to invoke \eqref{eq:Lowertail} we need to first estimate $\E(Y_n|W_n)$. To this effect, a direct computation gives
       \begin{align*}
       \E (Y_n|W_n)=&\E\Big( \sqrt{(\sum_{i=1}^nm_i({\bf X})-\bar{m}({\bf X}))^2}\Big|W_n\Big)\\
       \ge& \frac{1}{\sqrt{n}} \E\Big( \sum_{i=1}^n|m_i({\bf X})-\bar{m}({\bf X})|\Big|W_n\Big)\text{   [By Cauchy-Schwarz]}\\
       \ge & \frac{1}{2\gamma\sqrt{n}}\E \Big(\sum_{i=1}^n (m_i({\bf X})-\bar{m}({\bf X}))^2\Big|W_n\Big)\text{    [Since $|m_i({\bf x})-\bar{m}({\bf x})|\le 2\gamma$]}\\
       \ge &\frac{1}{2\gamma\sqrt{n}}\sum_{i=1}^n Var\Big(m_i({\bf X})-\bar{m}({\bf X})|W_n\Big)\\
       =& \frac{1}{2\gamma\sqrt{n}}Var(X_1|W_n)\sum_{i,j=1}^n\Big(A_n(i,j)-\frac{1}{n}\cR_n(i)\Big)^2.
       \end{align*}
       Finally we have
 \begin{equation*}
\sum_{i,j=1}^n\Big(A_n(i,j)- \frac{1}{n}\cR_n(i)\Big)^2= \sum_{i,j=1}^n A^2_n(i,j) -\frac{1}{n}\sum_{i=1}^n \cR_n(i)^2\geq \sum_{i,j=1}^{n} A^2_n(i,j)- \gamma^2
\end{equation*}
 where the last inequality follows by \eqref{eq:bd}.
Since $Var(X_1|W_n)=\text{sech}^2(\beta_n W_n+B)$,  on the set $|W_n|\le 2$ we have
$$\E(Y_n|W_n)\ge \frac{1}{2\gamma\sqrt{n}}\text{sech}^2(\beta_n W_n+B)\Big(\sum_{i,j=1}^{n} A^2_n(i,j)- \gamma\Big)\ge 2c\sqrt{n}$$
for some constant $c>0$ by invoking \eqref{eq:Sparseness}.
 Also we have $$\Sigma_n\le 4\sup_{||{\bf a}||_2\le 1}\sum_{i=1}^n\tilde{a}_i^2\le 4\gamma^2,$$ and so on the set $|W_n|\le 2$ invoking \eqref{eq:Lowertail} with $t=\frac{nc^2}{16\gamma^2}$ gives $$\mathbb{P}\left(Y_n\leq c\sqrt{n}| W_n\right)\le \mathbb{P}\left(Y_n\leq 2c\sqrt{n}-4\gamma\sqrt{t}\Big| W_n\right)\leq 4e^{-\frac{t}{4}}=4 e^{-\frac{nc^2}{64\gamma^2}}.$$
With $\delta=c^2$ this gives
\begin{align*}
&\P^{\mathrm{cw}}_{n,\beta_n,B}(\sum_{i=1}^n(m_i({\bf X})-\bar{m}({\bf X}))^2\le n \delta)\\
\le &\P^{\mathrm{cw}}_{n,\beta_n,B}(|W_n|>2)+\E^{\mathrm{cw}}_{n,\beta_n,B}\Big(\P^{\mathrm{cw}}_{n,\beta_n,B}(Y_n\le c\sqrt{n}\Big|W_n)1\{|W_n|\le 2\}\Big)\\
\le &\P^{\mathrm{cw}}_{n,\beta_n,B}(|W_n|>2)+4 e^{-\frac{nc^2}{64\gamma^2}},
 \end{align*}
       from which the desired conclusion follows on using \cite[Lemma 3]{MMY} to note that the first term in the RHS above decays exponentially, as $(W_n|X_1,\cdots,X_n)\sim N(\bar{X},\frac{1}{n\beta_n})$.
       
       \end{proof}

\section{Appendix}\label{sec:appen}
\subsection{Proof of Theorem \ref{lem:basak}}

\begin{enumerate}
\item[(a)]

The proof idea of this theorem is a modification of the proof of \cite[Theorem 1.1]{BasakM} (see also \cite[Theorem 1.6]{CD}) with obvious modifications. Since a central estimate necessary for this paper appears as by product of the proof technique, the proof is given in detail. 
\\

To begin fix $\delta>0$ and note that it suffices to show
$$\limsup_{n\rightarrow\infty}\P_{n,\beta,B}(f_n(b({\bf X}))-I(b({\bf X}))\le r_n-n\delta)=0.$$

Without loss of generality assume $\gamma=1$ for simplicity of exposition, where $\gamma$ is as in \eqref{eq:bd}. Fixing $\varepsilon>0$, by Lemma \ref{lem:basak2} and Markov's inequality we have 
$\lim_{n\rightarrow\infty}\P(C_n)=1,$ where $C_n$ is the intersection of the following three sets:
\begin{align*}
&\Big\{{\bf x}\in \{-1,1\}^n:\Big|\sum_{i=1}^n(x_i-b_i({\bf x}))\Big|\le n\varepsilon\Big\},\\
 &\Big\{{\bf x}\in \{-1,1\}^n:\Big|\sum_{i=1}^n(x_i-b_i({\bf x}))m_i({\bf x})\Big|\le n\varepsilon\Big\},\\
&\Big\{{\bf x}\in \{-1,1\}^n:\Big|f_n({\bf x})-f_n(b({\bf x}))\Big|\le n\varepsilon\Big\}.
 \end{align*}
Also, for ${\bf z},{\bf w}\in [-1,1]^n$ set  $$g_n({\bf z},{\bf w}):=\sum_{i=1}^n\Big\{\frac{1+z_i}{2}\log \frac{1+w_i}{2}+\frac{1-z_i}{2}\log \frac{1-w_i}{2}\Big\},$$
and note that on $C_n$ we have
\begin{align}\label{eq:basak1}
|g({\bf x},b({\bf x}))-I(b({\bf x}))|
=\Big|\sum_{i=1}^n(x_i-b_i({\bf x}))(\beta m_i({\bf x})+B)\Big|\le (\beta+|B|) n \varepsilon.
\end{align}
Thus, setting  $${S}_n:=\{{\bf x}\in \{-1,1\}^n: f_n(b({\bf x}))-I(b({\bf x}))\le r_n-n\delta\},$$
we have
\begin{align}\label{eq:prob}
\P_{n,\beta,B}(S_n)\le &\frac{\sum_{{\bf x}\in S_n\cap C_n}e^{f_n({\bf x})}}{\sum_{{\bf x}\in \{-1,1\}^n}e^{f_n({\bf x})}}+\P_{n,\beta,B}(C_n^c).
\end{align}
For estimating the denominator in the RHS of \eqref{eq:prob}, using the bound of \cite[Theorem 1.6]{CD} we have
\begin{align}\label{eq:denom}
\sum_{{\bf x}\in \{-1,1\}^n} e^{f_n({\bf x})}\ge \sup_{{\bf y}\in [-1,1]^n}e^{f_n({\bf y})-I({\bf y})}=e^{r_n}.
\end{align}
Proceeding to bound the numerator in the RHS of \eqref{eq:prob}, let $N_n(\varepsilon):=\{i\in [n]:|\lambda_i(A_n)|>\varepsilon/\sqrt{2}\}$ and use \eqref{eq:mean_field} to note that
$$\frac{|N_n(\varepsilon)|}{n}\le \frac{2}{n\varepsilon^2}\sum_{i\in [n]}\lambda_i(A_n)^2=\frac{2}{n\varepsilon^2}\sum_{i,j=1}^nA_n(i,j)^2\stackrel{n\rightarrow\infty}{\rightarrow}0.$$
Set $k:=|N_n(\varepsilon)|$, and let $\cD_{n,0}(\varepsilon)$ be a $\varepsilon\sqrt{n/2}$ net of the set $\{{\bf f}\in \R^{k}:\sum_{i=1}^kf_i^2 \le n\}$ of size at most $\Big(\frac{3\sqrt{2}}{\varepsilon}\Big)^{k}$. The existence of such a net is standard, see for example \cite[Lemma 2.6]{MS}. Thus letting $\{{\bf p}_1,\cdots,{\bf p}_n\}$ denote the eigenvectors of $A_n$ and setting
$${\cD}_{n,1}(\varepsilon):=\Big\{\sum_{i=1}^kc_i\lambda_i(A_n){\bf p}_i, {\bf c}\in \cD_{n,0}(\varepsilon)\Big\},$$ we claim that ${\cD}_{n,1}(\varepsilon)$ is a $\varepsilon\sqrt{n}$ net of the set $\{A_n{\bf x}:{\bf x}\in \{-1,1\}^n\}$. Indeed, any ${\bf x}\in \{-1,1\}^n$ can be written as $\sum_{i=1}^nf_i{\bf p}_i$, where $\sum_{i=1}^nf_i^2=\sum_{i=1}^nx_i^2=n $. In particular this means that $\sum_{i=1}^kf_i^2\le n$, and so there exists ${\bf c}\in \cD_{n,0}(\varepsilon)$ such that $||{\bf c}-{\bf f}||\le \varepsilon\sqrt{n/2}$. Thus with $\sum_{i=1}^kc_i \lambda_i(A_n){\bf p}_i\in {\cD}_{n,1}(\varepsilon)$ we have
\begin{align*}
||A_n{\bf x}-\sum_{i=1}^k c_i\lambda_i(A_n){\bf p}_i||^2=&\sum_{i=1}^k(c_i-f_i)^2\lambda_i(A_n)^2+\sum_{i=k+1}^n\lambda_i(A_n)^2f_i^2\\
\le& \frac{1}{2}n\varepsilon^2+\frac{1}{2}n \varepsilon^2=n\varepsilon^2,
\end{align*}
where we have used \eqref{eq:bd} to note that $$\max_{i\in [n]}|\lambda_i(A_n)|\le \max_{i\in [n]}\sum_{j=1}^n |A_n(i,j)|\le 1.$$
In particular for any ${\bf x}\in \{-1,1\}^n$ there exists at least one ${\bf p}\in \cD_{n,1}(\varepsilon)$ such that $||{\bf p}-m({\bf x})||\le \sqrt{n}\varepsilon$.   
For any ${\bf p}\in \cD_{n,1}(\varepsilon)$ let $$\cP({\bf p}):=\{{\bf x}\in \{-1,1\}^n:||{\bf p}-m({\bf x})||\le\varepsilon \sqrt{n}\}.$$ Thus
using \eqref{eq:basak1} we have
\begin{align}
\notag&\sum_{{\bf x}\in {S}_n\cap C_n}e^{f_n({\bf x})}\\
\notag\le &e^{n\varepsilon(1+\beta+|B|)}\sum_{{\bf x}\in {S}_n\cap C_n}e^{f_n(b({\bf x}))-I(b({\bf x}))+g_n({\bf x},b({\bf x}))}\\
\notag\le &e^{n\varepsilon(1+\beta+|B|)}\sup_{{\bf x}\in {S}_n} e^{f_n(b({\bf x}))-I(b({\bf x}))}\sum_{{\bf x}\in \{-1,1\}^n}e^{g_n({\bf x},b({\bf x}))}\\
\label{eq:boundd}\le &e^{n\varepsilon(1+\beta+|B|)+r_n-n\delta}\sum_{{\bf p}\in \cD_{n,1}(\varepsilon)}\sum_{{\bf x}\in \cP({\bf p})}e^{g_n({\bf x},b({\bf x}))}.
\end{align}
Setting ${\bf u}={\bf u}({\bf p}):=\tanh(\beta {\bf p}+B)$,  if $||{\bf p}-m({\bf x})||\le \varepsilon\sqrt{n}$, then we have
\begin{align}
\label{eq:c13}
|g_n({\bf x},b({\bf x}))-g_n({\bf x},{\bf u})|\le 2\sum_{i=1}^n|m_i({\bf x})-{\bf p}_i|\le 2\sqrt{n}||m({\bf x})-{\bf p}||\le 2n\varepsilon.
\end{align}
Combining \eqref{eq:c13} along with \eqref{eq:boundd} gives
\begin{align}
\notag\sum_{{\bf x}\in {S}_n\cap C_n}e^{f_n({\bf x})}\le &e^{n\varepsilon(3+\beta+|B|)+r_n-n\delta}\sum_{{\bf p}\in \cD_{n,1}(\varepsilon)}\sum_{{\bf x}\in \cP({\bf p})}e^{g_n({\bf x},{\bf u}({\bf p}))}\\
\notag\le &e^{n\varepsilon(3+\beta+|B|)+r_n-n\delta}\sum_{{\bf p}\in \cD_{n,1}(\varepsilon)}\sum_{{\bf x}\in \{-1,1\}^n} e^{g_n({\bf x},{\bf u}({\bf p}))}\\
\label{eq:num}=&e^{n\varepsilon(3+\beta+|B|)+r_n-n\delta}|\cD_{n,1}(\varepsilon)|,
\end{align}
where the last equality uses the fact that $\sum_{{\bf x}\in \{-1,1\}^n}e^{g_n({\bf x},{\bf u})}=1$ for any ${\bf u}\in [-1,1]^n$.

Combining \eqref{eq:num} and \eqref{eq:denom} along with \eqref{eq:prob} gives
$$\P_{n,\beta,B}(S_n)\le e^{n\varepsilon(3+\beta+|B|)-n\delta}|\cD_{n,1}(\varepsilon)|+\P_{n,\beta,B}(C_n^c).$$
Since $\lim_{n\rightarrow\infty}\P_{n,\beta,B}(C_n^c)=0$, it suffices to control the first term above. To this effect, recall that $|\cD_{n,1}(\varepsilon)|=|\cD_{n,0}(\varepsilon)|\le \Big(\frac{3\sqrt{2}}{\varepsilon}\Big)^{k_n}$,
and so
$$\frac{1}{n}\log  \Big[e^{n\varepsilon(3+\beta+|B|)-n\delta}|\cD_{n,1}(\varepsilon)|\Big]=\varepsilon(3+\beta+|B|)-\delta ,$$
from which the desired conclusion follows since $\varepsilon>0$ is arbitrary.


\item[(b)]

Fixing $\delta>0$ it suffices to show that
$$\P_{n,\beta,\B}(||\nabla f_n(b({\bf X}))-\nabla I(b({\bf X}))||^2>n\delta)=0.$$
To this effect, use \eqref{eq:bd} to note that $|b_i({\bf x})|\le \tanh(\beta \gamma+|B|)=:p\in (0,1)$. 
Now if ${\bf y}\in [-p,p]^n$ is such that $||\nabla f_n({\bf y})-\nabla I({\bf y})||^2>n\delta$, then setting ${\bf y}^{(t)}:={\bf y}+t(\nabla f_n({\bf y})-\nabla I({\bf y}))$ we claim that ${\bf y}^{(t)}\in [-1,1]^n$ for $t\in (0,\varepsilon)$ for $\varepsilon:=(1-p)(1-p^2)>0$ free of $n$. Indeed this follows on noting that $$\Big|\frac{\partial^2 f_n({\bf y})}{\partial y_i^2}-\frac{\partial^2 I({\bf y})}{\partial y_i^2}\Big|=\frac{1}{1-y_i^2}\le \frac{1}{1-p^2}.$$
But then a two term Taylor expansion gives
\begin{align*}
&\{f_n({\bf y}^{(t)})-I({\bf y}^{(t)})\}- \{f_n({\bf y})-I({\bf y})\}\\
\ge&t||\nabla f_n({\bf y})-\nabla I({\bf y})||^2-\frac{t^2}{2}||\nabla f_n({\bf y})-\nabla I({\bf y})||^2\Big(\beta \gamma+\frac{1}{1-p^2}\Big)\\
\ge  &nt\delta\Big[1-\frac{t}{2}(\beta \gamma+\frac{1}{1-p^2})\Big].
\end{align*}
There exists $t\in (0,\varepsilon)$ such that $1-\frac{t}{2}(\beta \gamma+\frac{1}{1-p^2})>0$. Fixing such a $t$, we have
$$\sup_{{\bf y}:||\nabla f_n({\bf y})-\nabla I({\bf y})||^2>n\delta}\{f_n({\bf y})-I({\bf y})\}\le \sup_{{\bf y}\in [-1,1]^n}\{f_n({\bf y})-I({\bf y})\}-nt\delta\Big[1-\frac{t}{2}(\beta \gamma+\frac{1}{1-p^2})\Big].$$
This along with part (a) gives the desired conclusion.

\end{enumerate}

\subsection{Proof of Lemma \ref{lem:rootn}}
By symmetry it suffices to consider $B_0>0$ and show that
$$\limsup_{n\rightarrow\infty}\frac{1}{n}\log \P_{n,\beta_0,B_0}(\sum_{i=1}^nX_im_i({\bf X})<n\delta)<0.$$
Setting $\lambda:=\beta_0/4$, a direct argument using Markov's inequality gives
\begin{align*}
\P_{n,\beta_0,B_0}(\sum_{i=1}^nX_im_i({\bf X})<n\delta)\le& e^{\lambda n \delta} \E_{n,\beta_0,B_0} e^{-\lambda \sum_{i=1}^nX_im_i({\bf X})}\\
=& e^{\lambda n\delta+Z_n(\beta_0-2\lambda,B_0)-Z_n(\beta_0)}\\
\le &e^{\lambda n\delta-2\lambda Z_n'(\beta_0-2\lambda,B_0)},
\end{align*}
where $Z_n'(\beta,B):=\frac{\partial Z_n(\beta,B)}{\partial \beta}$, and the last inequality uses the fact that the function $\beta\mapsto Z_n'(\beta,B)$ is non decreasing in $\beta$. On taking $\log$ on both sides, dividing by $n$ and letting $n\rightarrow\infty$ gives
$$\limsup_{n\rightarrow\infty}\frac{1}{n}\log  \P_{n,\beta_0,B_0}(\sum_{i=1}^nX_im_i({\bf X})<n\delta)\le \frac{\beta_0}{4}\Big(\delta-\frac{2}{n}Z_n'(\beta_0/2,B_0)\Big),$$
and so it suffices to show that
\begin{align}\label{eq:suffice_m}
\liminf_{n\rightarrow\infty}\frac{2}{n}Z_n'(\beta_0/2,B_0)>0.
\end{align}
To show this note that
\begin{align}
\notag 2Z_n'(\beta_0/2,B_0)=&\E_{n,\beta_0/2,B_0}\sum_{i=1}^nX_im_i({\bf X})\\
\notag=&\E_{n,\beta_0/2,B_0}\sum_{i=1}^nm_i({\bf X})\tanh(\beta_0 m_i({\bf X})+B_0)\\
\notag=&\tanh(B_0)\E_{n,\beta_0/2,B_0}\sum_{i=1}^nm_i({\bf X})+\beta_0\E_{n,\beta_0/2,B_0}\sum_{i=1}^n m_i({\bf X})^2\text{sech}^2(\xi_i)\\
\label{eq:est_new_1}\ge &\tanh(B_0)\E_{n,\beta_0/2,B_0}\sum_{i=1}^nm_i({\bf X}),
\end{align}
where $\xi_i$ is a random variable lying between $B_0$ and $B_0+\beta m_i({\bf X})$. To show this use elementary calculus to note that there exists $C>0$ such that $$\inf_{|x|\le \beta_0\gamma,y>0}\frac{\tanh(x+y)-\tanh(x)}{\tanh(y)}\ge C,$$ which gives
\begin{align}
\notag\E_{n,\beta_0/2,B_0}\sum_{i=1}^nm_i({\bf X})=&\E_{n,\beta_0/2,B_0}\sum_{j=1}^n\cR_j\tanh(\beta_0 m_j({\bf X})+B_0)\\
\notag\ge &\sum_{j=1}^n \cR_j\E_{n,\beta_0/2,B_0}\tanh(\beta_0 m_j({\bf X}))+C\tanh(B_0)\sum_{j=1}^n\cR_j\\
\label{eq:est_new_2}\ge&C\tanh(B_0)\sum_{j=1}^n\cR_j,
\end{align}
where the last inequality uses the fact that the function ${\bf x}\mapsto \beta_0 m_j({\bf x})$ is an increasing function in the sense of \cite[Theorem 2.1]{grimmet}, along with the fact that the measure $\P_{n,\beta_0/2,B_0}$ is stochastically larger than $\P_{n,\beta_0/2,0}$, giving
$$\E_{n,\beta_0/2,B_0}\tanh(\beta_0 m_j({\bf X})\ge \E_{n,\beta_0/2,0}\tanh(\beta_0 m_j({\bf X}))=0.$$ The desired conclusion \eqref{eq:suffice_m} then follows from \eqref{eq:est_new_1} and \eqref{eq:est_new_2}.

\subsection{Proof of Proposition \ref{ppn:rootn}}
\begin{enumerate}
\item[(a)]
To begin, use a one term Taylor expansion to get
\begin{align}\label{eq:beta_est}
0=Q_n(\hat{\beta}_n,B|{\bf X})=Q_n(\beta_0,B|{\bf X})+\int_{\beta_0}^{\hat{\beta}_n}\frac{\partial}{\partial t} Q_n(t,B|{\bf X}) dt,
\end{align}
where 
\begin{align*}
\frac{\partial Q_n(\beta,B|{\bf X})}{\partial \beta}
=-\sum_{i=1}^nm_i({\bf x})^2\text{sech}^2(\beta m_i({\bf x})+B)
\ge -\text{sech}^2(\beta \gamma+|B|)\sum_{i=1}^nm_i({\bf X})^2.
\end{align*}
This along with \eqref{eq:beta_est} gives
$$\sum_{i=1}^nm_i({\bf X})^2 \Big|\int_{\beta_0}^{\hat{\beta}_n} \text{sech}^2(t \gamma+|B|)dt\Big|\le |Q_n(\beta_0,B|{\bf X})|=O_p(\sqrt{n}),$$
where the last step uses \eqref{eq:bas2} along with Markov's inequality. 
The desired conclusion is immediate from this, if we can show
 $\sum_{i=1}^nm_i({\bf X})^2=\Theta_p(n),$ which follows if we can show
\begin{align*}
\lim_{\delta\rightarrow0}\lim_{n\rightarrow\infty}\P_{n,\beta,B}(\frac{1}{n}\sum_{i=1}^nm_i({\bf X})^2>n\delta)=1.
\end{align*}
But this follows on using Cauchy-Schwarz inequality to note that
$$\sum_{i=1}^nm_i({\bf x})^2\ge \frac{1}{n}\Big[\sum_{i=1}^nX_im_i({\bf X})\Big]^2$$
along with Lemma \ref{lem:rootn}.
\item[(b)]

The proof of part (b) follows by similar calculations as in part (a), on using \eqref{eq:bas1} and noting that
$$\frac{\partial R_n(\beta,B)}{\partial B}=-\sum_{i=1}^n\text{sech}^2(\beta m_i({\bf x})+B)\le -n\text{sech}^2(\beta \gamma+|B|).$$

\end{enumerate}


\begin{thebibliography}{99}
 \bibitem{structure_learning}
				A.~Anandkumar, V.~Tan, F.~Huang, and A.~Willsky. \newblock High-dimensional structure estimation in Ising models: Local separation criterion, {\it Annals of Statistics}, Vol.~40 (3), 1346--1375, 2012.


\bibitem{BasakM}
A.~Basak and S.~Mukherjee, Universality of the mean-field for the Potts model, to appear in {\em Probability Theory and Related Fields}, 2016.

 \bibitem{B1}
J.~Besag.
\newblock {Spatial Interaction and the Statistical Analysis of Lattice
  Systems}.
\newblock {\em Journal of the Royal Statistical Society. Series B.
  (Methodological)}, 36(2):192--236, 1974.

\bibitem{B2}
J.~Besag.
\newblock {Statistical Analysis of Non-Lattice Data}.
\newblock {\em Journal of the Royal Statistical Society. Series D. (The
  Statistician)}, 24(3):179--195, 1975.
  
  
\bibitem{BhattacharyaM}
B.~Bhattacharya and S.~Mukherjee, Inference in Ising model, to appear in {\em Bernoulli}, 2016.

\bibitem{BLM} S. Boucheron, L. Gabor and P. Massart, Concentration inequalities {\it Oxford University Press, Oxford}, 2013.

\bibitem{bresler}
G.~Bresler,  
\newblock Efficiently learning Ising models on arbitrary graphs, {\it Proc. Symposium on Theory of Computing (STOC)}, 771--782, 2015.


 \bibitem{Chatterjee}
 S. Chatterjee
 \newblock Estimation in spin glasses: A first step.
 \newblock{ \em Annals of Statistics}, 35(5):1931-1946, 2007.

\bibitem{CD}
S.~Chatterjee, and A.~Dembo.
\newblock Nonlinear large deviations.
To appear in \newblock {\em Advances in Mathematics}.

 \bibitem{comets_exp}
F. Comets, On consistency of a class of estimators for exponential families of Markov random fields on the lattice,  {\it  Annals of Statistics}, Vol. 20 (1), 455--468, 1992.

\bibitem{comets_mle}
F. Comets and B. Gidas, Asymptotics of maximum likelihood estimators for the Curie-Weiss model,    {\it  Annals of Statistics}, Vol. 19 (2), 557--578, 1991.

\bibitem{dembo_montanari}
A. Dembo and A. Montanari, Gibbs measures and phase transitions on sparse random graphs, {\it Brazilian Journal of Probability and Statistics}, Vol. 24 (2), 137--211, 2010.

\bibitem{EG}
R.~Eldan and E.~Gross, {\it Decomposition of mean-field Gibbs distributions into product
measures}, Available at \url{https://arxiv.org/pdf/1708.05859.pdf}.

\bibitem{Ellis_Newman} S. R. Ellis and C. M. Newman, The statistics of the Curie-Weiss models, {\it Journal of Statistical Physics}, Vol. 19 (2), 149--161, 1978.

\bibitem{gidas}
B. Gidas, Consistency of maximum likelihood and pseudolikelihood estimators for Gibbs distributions, {\it In Stochastic  Differential  Systems, Stochastic Control Theory and Applications
(W. Fleming and P.-L. Lions, eds.)}, 129--145, Springer, New York, 1988

\bibitem{grimmet} G. Grimmett, The random cluster model, {\it Springer-Verlag, Berlin}, 2006.


\bibitem{guyon}
X. Guyon, H. R. K\"unsch, Asymptotic comparison of estimators in the Ising Model, {\it Stochastic Models, Statistical Methods, and Algorithms in Image Analysis}, Lecture Notes in Statistics, Vol, 74, 177--198, 1992. 



\bibitem{lovasz_book}
L. Lov\'asz, {\it Large Networks and Graph Limits}, Colloquium Publications, Vol. 60, 2012.


\bibitem{MMY}
R.~Mukherjee, S.~Mukherjee, and M.~Yuan, Global testing against sparse alternatives under Ising models, 2016, To appear in {\it Annals of Statistics}.


\bibitem{MS}
 V.~Milman and G.~Schechtman, Asymptotic theory of finite-dimensional normed spaces. Lecture Notes in
Mathematics, 1200, {\em Springer}, 1986.

\bibitem{highdim_ising}
P. Ravikumar, M. J. Wainwright and J. Lafferty, High-dimensional Ising model selection using $\ell_1$-regularized logistic regression, {\it  Annals of Statistics}, Vol. 38 (3), 1287--1319, 2010.

\bibitem{ising_nonconcave}
L. Xue, H. Zou, and T. Cai, Non-concave penalized composite conditional likelihood estimation of sparse Ising models, 
{\it  Annals of Statistics}, Vol. 40 (3), 1403--1429, 2012. 


\end{thebibliography}
\end{document}